\DeclareMathSymbol\varGamma    {\mathord}{letters}{0}
\DeclareMathSymbol\varOmega    {\mathord}{letters}{10}
\renewcommand{\Lambda}{{\mit\varLambda}}
\renewcommand{\Omega}{{\mit\varOmega}}
\renewcommand{\leq}{\leqslant}
\renewcommand{\geq}{\geqslant}
\newcommand{\eps}{\varepsilon}
\newcommand{\mf}{\mathbf}
\newcommand{\e}{+}
\newcommand{\I}{-}
\DeclareMathOperator{\myRe}{\mathrm{Re}}
\DeclareMathOperator{\sech}{\mathrm{sech}}
\newcommand{\R}{\mathbb{R}}
\newcommand{\C}{\mathbb{C}}
\newcommand{\Z}{\mathbb{Z}}
\newcommand{\N}{\mathbb{N}}
\newcommand{\diff}{\,\mathrm{d}}
\theoremstyle{plain}
\newtheorem{prop}{Proposition}
\newtheorem{thm}{Theorem}
\newtheorem{lemme}{Lemma}
\theoremstyle{remark}
\newtheorem{rmk}{Remark}
\title{Regularity results for transmission problems with sign-changing coefficients: a modal approach}
\author{Valentin Vinoles%
  \thanks{\texttt{valentin.vinoles@epfl.ch}}}
\affil{\'Ecole Polytechnique F\'ed\'erale de Lausanne \\ SB MATHAA CAMA, Station 8 \\ CH-1015 Lausanne, Switzerland}
\begin{document}

\maketitle

\begin{abstract}
 We investigate some scalar transmission problems between a classical positive material and a negative one, whose physical coefficients are negative. First, we consider cases where the negative inclusion is a disk in 2d and a ball in 3d. Thanks to asymptotics of Bessel functions (validated numerically), we show well-posedness but with some possible loses of regularity of the solution compared to the classical case of transmission problems between two positive materials. Noticing that the curvature plays a central role, we then explore the case of flat interfaces in the context of waveguides. In this case, the transmission problem can also have some loses of regularity, or even be ill-posed (kernel of infinite dimension). \newline
 
\noindent \emph{Keywords:} metamaterial, negative material, sign-changing, transmission problem, modal decomposition, Bessel and Hankel function, regularity, waveguides\newline

\noindent 2000 Math Subject Classification: 33C10, 35A01, 35B65, 35J05
\end{abstract}

\section{Introduction}

In recent decades, physicists and engineers have studied and developed metamaterials, \textit{i.e.} artificial materials with unusual electromagnetic properties through periodic microscopic structures that resonate. In particular, some of them exhibit effective permittivity $\eps$ and/or permeability $\mu$ that are negative in certain ranges of frequencies (see \cite{pendry2004negative,smith2004metamaterials}, the mathematical justification of these effective behaviours is based on the so-called high contrast homogenization, see for instance \cite{bouchitte2010homogenization,lamacz2013effective}). Such media are subject of intense researches due to promising applications (\cite{cui2010metamaterials}): super-lens, cloaking, improved antenna, etc. 

In this paper, we study scalar transmission problems in the frequency domain between a \emph{positive material}, that is to say a medium with both positive $\eps$ and $\mu$, and a \emph{negative material}, with both negative $\eps$ and $\mu$. Since the permittivity and permeability change sign through the interface between the two materials, we refer to these problems as (scalar) transmission problems with sign-changing coefficients. 

From a mathematical point of view, they raise new and interesting questions that require specific tools. There is already a relatively abundant mathematical literature on these problems. We refer to the survey \cite{li2016literature}. Without being exhaustive, let us mention first the pioneering work of \cite{costabel1985direct} using boundary integral techniques, followed by \cite{ola1995remarks}. There are also the works around the cloaking and the so-called ``anomalous localized resonances'' (\textit{e.g.} \cite{bouchitte2010cloaking,milton2006cloaking}). Another important contribution is the series of papers by Bonnet-Ben Dhia \textit{et al.} using the T-coercivity method (see for instance \cite{dhia2012t,bonnet2013radiation} and references therein). An alternative method is the reflecting technique introduced by \cite{nguyen2015asymptotic,nguyen2016limiting}. Finally, some authors studied the links between these problems and some transmission problems in the time domain, based on the limiting amplitude principle (\cite{cassier2014etude,gralak2012negative}).

It is nowadays well-known that well-posedness of transmission problems with sign-changing coefficients is related to the \emph{contrasts}, defined as the ratios of the values of the coefficients on each side of the interface between the two materials. In order to ensure well-posedness in the classical $H^1$ framework, the contrast in the principal part of the operator (for scalar problems)  must lie outside an interval called the critical interval that contains $\{-1\}$ (see \textit{e.g.} \cite{dhia2012t}). If the interface is smooth, this interval reduces to $\{-1\}$.

The critical case of contrasts equal to $-1$ has not been much studied. As pointed out by \cite{ola1995remarks} and \cite{nguyen2016limiting}, this case can lead to loses of regularity of the solutions in the sense that they are less regular than in the classical case (\textit{i.e.} transmission problems between two positive materials). In this paper, we want to investigate more on these loses of regularity. We restrain ourselves to particular geometries for which one can use modal decomposition techniques based on the separation of variables (\cite{morse1953methods}). While these methods are well-known and widely used, their application to transmission problems with sign-changing coefficients are not without interest: they manage to fully describe the well-posedness of our problems and the regularity of the solutions, and gives optimal results. Incidentally, as we will see, mention that they are well adapted to the description of the radiation conditions which can be tricky in negative materials (\cite{malyuzhinets1951note,ziolkowski2001wave}). Let us mention that the present paper is a revised version of the study presented in the PhD thesis of the author (\cite{vinoles2016problemes}).

This text is organized as follows. First, in Section \ref{sec:mathpb} we settle the transmission problems with sign-changing coefficients we study. In Section \ref{sec:diskball}, we explore the case where the negative material is a disk (in 2d) and a ball (in 3d). The analysis requires estimates for Bessel and Hankel functions that are not totally standard (proved in the Appendix and verified numerically). We also study what happens when the curvature tends to 0. That motivates the study of Section \ref{sec:planar} in which we investigate some cases where the interface is flat. We conclude in Section \ref{ref:comments} with some comments and perspectives.

\section{Setting of the problem and objectives}
\label{sec:mathpb}

Consider for the moment a generic non-empty simply connected open set $\Omega^\I$ (not necessarily bounded) of $\R^d$ where $d \geq 1$ is the dimension and define its exterior $\Omega^\e$ by
\begin{equation}
\Omega^\e := \R^d \setminus \overline{\Omega^\I}.
\end{equation}
We assume that $\Omega^+ \neq \emptyset$ and that the interface $\Gamma$ between $\Omega^\e$ and $\Omega^\I$ is smooth. We denote by $\mf n$ the outward-pointing normal of $\partial \Omega^\I$ (see Figure \ref{fig:geometry}).
\begin{figure}[!htbp]  
\centering
\includegraphics{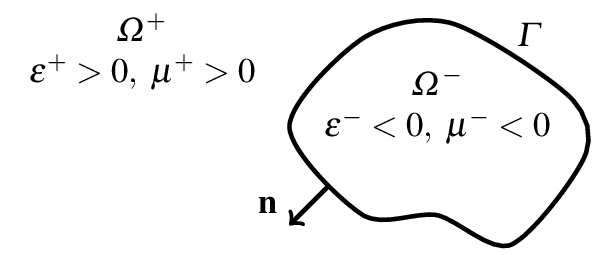}
\caption{\label{fig:geometry}Geometry of the problem (in the case $\Omega^\I$ bounded).}
\end{figure}
The domain $\Omega^\e$ (resp. $\Omega^\I$) represents the positive (resp. negative) material. More precisely, we consider two functions $\eps$ and $\mu$, representing for instance the permittivity and the permeability, such that
\begin{equation}
\eps(\mf x) := \begin{cases} \eps^\e > 0 & \text{for $\mf x \in\Omega^\e$,} \\ \eps^\I < 0 & \text{for $\mf x \in\Omega^\I$,}  \end{cases} \qquad \text{and} \qquad 
\mu(\mf x) := \begin{cases} \mu^\e > 0 & \text{for $\mf x \in\Omega^\e$,} \\ \mu^\I < 0 & \text{for $\mf x \in\Omega^\I$,} \end{cases}
\end{equation}
where $\eps^\e$ and $\mu^\e$ (resp. $\eps^\I$ and $\mu^\I$) are positive (resp. negative) constants. We define $\omega > 0$ the frequency with the convention of the time dependence $e^{-i \omega t}$. Introduce $k^\e$ and $k^\I$ the wave numbers such that
\begin{equation}
 k^\e := \omega \sqrt{\eps^\e \mu^\e}  \qquad \text{and} \qquad k^\I := \omega \sqrt{\eps^\I \mu^\I}.
\end{equation}
Notice that these quantities are positive.

Let us now introduce the couple $(\sigma,\varsigma)$ equal indifferently to $(\eps^{-1},\mu)$ or $(\mu^{-1},\eps)$. We look for a solution $u$ of the Helmholtz equation 
\begin{equation}\label{eq:TE}
 \nabla \cdot \left( \sigma(\mf x) \nabla u(\mf x) \right) + \omega^2 \varsigma(\mf x) u(\mf x) =0,\qquad \mf x \in \R^d,
\end{equation}
that can be written as
\begin{equation} \label{eq:TE2}
\left\{
\begin{aligned}
 & \Delta u + (k^\pm)^2 u = 0, &  \qquad \text{in $\Omega^\pm$,} \\
[u]_\Gamma &= 0,\ [ \sigma \partial_{\mf n} u ]_\Gamma = 0,
\end{aligned}
\right.
\end{equation}
where $[\,\cdot\,]_\Gamma$ stands for the jump through $\Gamma$ from $\Omega^\e$ to $\Omega^\I$ (here $\partial_{\mf n} u  := \nabla u \cdot\, \mf n$). Notice that the change of sign in \eqref{eq:TE2} only appears in the jump of the fluxes through $\Gamma$ and not in equations in $\Omega^\e$ and $\Omega^\I$.  

Consider now an incident field $u^\mathrm{inc}$ that satisfies
\begin{equation}\label{eq:pbondeinc} 
\Delta u^\mathrm{inc} + (k^\e)^2 u^\mathrm{inc}= 0,  \qquad \text{in $\Omega^\e$,} 
\end{equation}
and split $u$ as, on the one hand, the sum of the incident wave $u^\mathrm{inc}$ and a scattered one denoted by $u^\e$ in $\Omega^\e$ and, on the other hand, as a transmitted wave denoted by $u^\I$ in $\Omega^\I$ :
\begin{equation}\label{eq:decomposition} 
u(\mf x) = \begin{cases} u^\mathrm{inc}(\mf x) + u^\e(\mf x) & \text{for $\mf x \in \Omega^\e$,} \\ u^\I(\mf x) & \text{for $\mf x \in \Omega^\I$.} \end{cases}
\end{equation}
Using \eqref{eq:decomposition}, the transmission conditions of \eqref{eq:TE2} write
\begin{equation}
\left. u^\I \right \vert_\Gamma - \left. u^\e \right \vert_\Gamma =  f \qquad \text{and} \qquad
 \partial_{\mf n} u^\I- \kappa \partial_{\mf n}  u^\e  = g, 
\end{equation}
where $f:= \left. u^\mathrm{inc}  \right \vert_\Gamma$, $g := \kappa \partial_{\mf n}  u^\mathrm{inc}$ and
where $\kappa$ is the contrast defined by
\begin{equation}
\kappa := \frac{\sigma^\e}{\sigma^\I}  < 0.
\end{equation}  
In order to close \eqref{eq:TE2}, one needs to add Radiation Conditions (RCs) when $\vert \mf x \vert$ tends to $+\infty$. There are two possible cases. When $\Omega^\I$ is bounded, one must impose the RCs in the positive medium $\Omega^\e$ only. In this case, one classically uses the Sommerfeld radiation condition (\cite{cakoni2005qualitative,colton2012inverse}): 
\begin{equation}\label{eq:sommerfeld}
\lim_{R \to + \infty} \int_{\vert \mf x \vert = R} \left\vert \frac{\partial }{\partial r}u - i k^\e u \right \vert^2 \diff r =0.
\end{equation}
The other case where $\Omega^\I$ is unbounded is less classical. This case is handled later on in Section \ref{sec:The unbounded case}.  

At the end, we obtain the following transmission problem:
\begin{equation} \label{eq:pbtrans} 
\left\{
\begin{aligned}
 \Delta u^\pm + (k^\pm)^2 u^\pm &= 0,&  \qquad &\text{in $\Omega^\pm$,} \\
u^\I -u^\e &=  f, & \qquad  &\text{on $\Gamma$,}  \\
\partial_{\mf n}  u^\I  - \kappa \partial_{\mf n}  u^\e &= g, & \qquad &\text{on $\Gamma$,}  \\
+\ \text{RCs}, &  & \qquad & \text{when $\vert \mf x \vert \to +\infty$}.
\end{aligned}
\right.
\end{equation}

Let $(f,g) \in \mf H^s(\Gamma)$ be the given data of \eqref{eq:pbtrans}, where we define
\begin{equation}
\mf H^s(\Gamma) := H^s(\Gamma) \times H^{s-1}(\Gamma),\qquad s > 0,
\end{equation}
Notice that this space is ``natural''  as the data come from the trace and the normal trace of the incident field $u^\mathrm{inc}$ on $\Gamma$.

When $\Omega^\I$ is bounded and $\kappa \neq -1$, this is well-known that for all $(f,g) \in \mf H^s(\Gamma)$, $s>0$, \eqref{eq:pbtrans} admits a unique solution $(u^\I\vert_\Gamma,u^\e\vert_\Gamma) \in H^s(\Gamma)^2$ (see for instance \cite{costabel1985direct}). In other words, there is no regularity loss. In dimension $d \geq 3$, when $\kappa = -1$ and $\Omega^\I$ is bounded and strictly convex, \cite{ola1995remarks} (and \cite{nguyen2016limiting} later on in  a more general setting) proved that for all $(f,g) \in \mf H^{s}(\Gamma)$, $s>1$, \eqref{eq:pbtrans} admits a unique solution $(u^\I\vert_\Gamma,u^\e\vert_\Gamma) \in H^{s-1}(\Gamma)^2$ (one order of regularity lost).

Studying \eqref{eq:pbtrans} for general domains $\Omega^\e$ and $\Omega^\I$ appears to be difficult. In this paper, we shall use a more modest approach using modal decompositions, also called (generalized) Lorentz-Mie method in the physics/engineer communities. Recall (see \cite{cakoni2005qualitative,colton2012inverse,morse1953methods,taflove2005computational}) that it is based on the separation of variables that allows to reduce \eqref{eq:pbtrans} to a countable family of linear systems. The solvability of \eqref{eq:pbtrans} boils down to the solvability of all these systems and the regularity of the solutions is linked to the asymptotics of their modal coefficients. Let us also mention that the radiation conditions are easily handled by modal decompositions, one just needs to select the modes that satisfy such conditions. Finally, this method gives optimal results, in the sense that it gives the best regularity of the solution for a given regularity of the data.

In the next section (Section \ref{sec:diskball}), we deal with cases where $\Omega^\I$ is a disk (in 2d) and a ball (in 3d). For $d=3$, we recover the results of \cite{ola1995remarks} and \cite{nguyen2016limiting} and gives new results for $d=2$. In particular, this later case leads to larger loses of regularity. We also study what happens when the curvature tends to 0, that is to say when the radius tends to infinity. In Section \ref{sec:planar}, cases with unbounded $\Omega^\I$ and flat interfaces are explored.

As we will see, three situations can be encountered:
\begin{itemize}
\item \emph{the standard case $\kappa \neq -1$} (corresponding to $\sigma^\I / \sigma^\e \neq -1$ and $\varsigma^\I / \varsigma^\e \neq -1$). Here, nothing unusual happens and we recover the standard result of no regularity loss;
\item \emph{the critical case $\kappa = -1$ and $k^\e \neq k^\I$} (corresponding to $\sigma^\I / \sigma^\e = -1$ but $\varsigma^\I / \varsigma^\e \neq -1$). In this case, although \eqref{eq:pbtrans} can be uniquely solved, we can have some regularity losses.
\item \emph{the super-critical case $\kappa = -1$ and $k^\e = k^\I$} (corresponding to $\sigma^\I / \sigma^\e = \varsigma^\I / \varsigma^\e = -1$)  Here, the regularity losses are at least as important as the ones in the critical case. In some situations, \eqref{eq:pbtrans} can even be ill-posed. 
\end{itemize}

\begin{rmk}
In the following we focus on the regularity of the traces $(u^\I\vert_\Gamma,u^\e\vert_\Gamma)$. Indeed, since the change of sign of \eqref{eq:pbtrans} only appears in the transmission conditions and not in the volume equations, one has the standard regularity result: for $(u^\I\vert_\Gamma,u^\e\vert_\Gamma) \in H^s(\Gamma)^2$, one gets $(u^\I,u^\e) \in  H^{s+1/2}(\Omega^\I) \times H^{s+1/2}(\Omega^\e)$.
\end{rmk}

\section{The case where the negative material is a ball or a disk} \label{sec:diskball}
 
We consider now that $\Omega^\I$ is a disk (in 2d) or a ball (in 3d) of radius $R>0$ centred at the origin (see Figure \ref{fig:DiskBall}). As mentioned before, since $\Omega^\I$ is bounded, the radiation condition is simply the Sommerfeld radiation condition \eqref{eq:sommerfeld}. Thus the transmission problem \eqref{eq:pbtrans} rewrites
 \begin{equation}  \label{eq:pbtrans:boule} 
\left\{
\begin{aligned}
\Delta u^\pm + (k^\pm)^2 u^\pm &= 0,&  \qquad &\text{in $\Omega^\pm$,} \\
u^\I -u^\e &=  f, & \qquad  &\text{on $\Gamma$,}  \\
\partial_{\mf n}  u^\I  - \kappa  \partial_{\mf n}  u^\e &= g,& \qquad &\text{on $\Gamma$,} \\
\text{$u^\e$ satisfies \eqref{eq:sommerfeld},} && \qquad &\text{when $\vert \mf x \vert \to +\infty$.}
\end{aligned}
\right.
\end{equation}

\begin{figure}[!htbp]  

~ \hfill \includegraphics{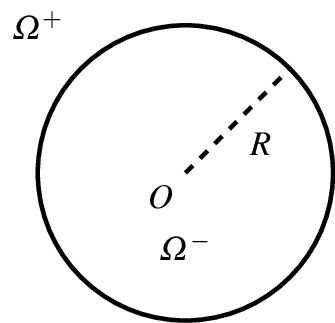} \hfill \includegraphics{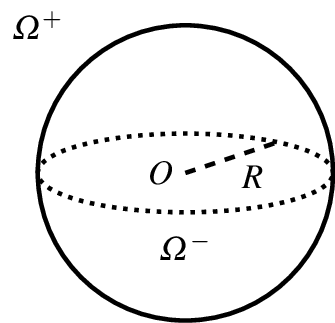} \hfill ~

\caption{\label{fig:DiskBall}Geometry of the problem \eqref{eq:pbtrans:boule} for $d=2$ (left) and $d=3$ (right)}
\end{figure}

\subsection{Reduction to linear systems}

Here we deal with Helmholtz equations in geometries with radial symmetries. Using separation of variables (we denote $(r,\theta)$ the polar coordinates in 2d and $(r,\theta,\phi)$ the spherical coordinates in 3d), it is well-known (see for instance \cite{colton2012inverse,morse1953methods}) that solutions can be expressed as series.
\begin{itemize}
\item In $\Omega^\I$ one has
\begin{equation}\label{eq:decomposition1}
\begin{aligned}
& u^\I(r,\theta) = \sum_{n \in \Z}  \frac{u^\I_n}{J_n(k^\I R)} J_n \left( k^\I r \right)  \psi_n(\theta), & \qquad & (d=2),  \\
& u^\I(r,\theta,\phi) = \sum_{\ell \in \N} \sum_{m = -\ell}^{+\ell}  \frac{u^\I_{\ell,m}}{j_\ell \left( k^\I R \right)} j_\ell \left( k^\I r \right)  \psi_{\ell,m}(\theta,\phi), & \qquad & (d=3),
\end{aligned}
\end{equation}
where $J_n$ (resp. $j_\ell$) is the Bessel function (resp. spherical Bessel function) of the first kind of order $n$ (resp. order $\ell$), $\psi_n$ the standard Fourier basis ($\psi_0 = 1/\sqrt{2\pi}$ and $\psi_n = e^{in\theta}/\sqrt{2\pi}$) and $\psi_{\ell,m}$ are the so-called spherical harmonics: 
\begin{equation}
\psi_{\ell,m}(\theta,\phi) := \sqrt{ \frac{2\ell+1}{4\pi} \frac{(\ell-m)!}{(\ell+m)!}} P_\ell^m(\cos \theta) e^{i m \phi},\qquad \ell \in \N,\ m \in \{-\ell,\ldots,\ell\} , 
\end{equation}
where $P_\ell^m$ is the associated Legendre polynomial of order $(\ell,m)$. Here, $u^\I_n$ and $u^\I_{\ell,m}$ are the modal coefficients to determine (we have normalised by $J_n(k^\I R)$ and $j_\ell(k^\I R)$ to simplify the incoming computations). 
\item In $\Omega^\I$ one has  
\begin{equation}\label{eq:decomposition2}
\begin{aligned}
& u^\e(r,\theta) = \sum_{n \in \Z} \frac{u^\e_n}{H_n   \left( k^\e R \right)}  H_n   \left( k^\e r \right)  \psi_n(\theta), & \qquad & (d=2),   \\
& u^\e(r,\theta,\phi) = \sum_{\ell \in \N} \sum_{m = -\ell}^{+\ell} \frac{u^\e_{\ell,m}}{h_\ell \left( k^\e R \right) }   h_\ell \left( k^\e r \right)  \psi_{\ell,m}(\theta,\phi), & \qquad & (d=3),
\end{aligned}
\end{equation}
where $H_n$ (resp. $h_\ell$) is the Hankel function (resp. spherical Hankel function) of first kind of order $n$ (resp. order $\ell$). Here, $u^\e_n$ and $u^\e_{\ell,m}$ are the modal coefficients to determine.
\end{itemize}
For more details about Bessel and Hankel functions, see Appendix \ref{sec:bessel} (also \cite{watson1995treatise,olver2010nist}). 

\begin{rmk}
Notice that the radiation condition of \eqref{eq:pbtrans:boule} is taken into account very simply thanks to the modal decomposition. Indeed, both $H_n$ and $h_\ell$ verifies \eqref{eq:sommerfeld} but this is not the case for the Hankel functions and spherical Hankel functions of the second kind. That is why these do not appear in \eqref{eq:decomposition2}.  
\end{rmk}

Since $(\psi_n)_n$ (resp. $(\psi_{\ell,m})_{\ell,m}$) is a Hilbert basis of $L^2(\mathbb S^1)$ (resp. $L^2(\mathbb S^2)$), plugging \eqref{eq:decomposition1} and \eqref{eq:decomposition2} in the transmission conditions of \eqref{eq:pbtrans:boule} leads to a countable family of  $2 \times 2$ linear systems. For $d=2$, we get, for all $n \in \Z$,
\begin{equation}\label{eq:systemes}
\mathcal A_n \begin{bmatrix}
 u^\I_n \\ u^\e _n
\end{bmatrix}
= 
\begin{bmatrix}
 f_n \\ g_n
\end{bmatrix}, \qquad \text{where} \qquad \mathcal A_n := 
\begin{bmatrix}
  1  &  -1     \\
\displaystyle \frac{k^\I  J^\prime_n ( k^\I R)}{ J_n ( k^\I R)}  &  - \displaystyle \kappa   \frac{k^\e H^\prime_n( k^\e R )}{ H_n  (k^\e R ) }
\end{bmatrix}.
\end{equation}
For $d=3$, we get for all $\ell \in \N$ and $m \in \{-\ell,\ldots,\ell\}$
\begin{equation}\label{eq:systemes3d}
\mathcal B_\ell 
\begin{bmatrix}
 u^\I_{\ell,m} \\ u^\e_{\ell,m} 
\end{bmatrix}
= 
\begin{bmatrix}
 f_{\ell,m}  \\ g_{\ell,m} 
\end{bmatrix}, \qquad \text{where} \qquad \mathcal B_\ell  := 
\begin{bmatrix}
   1  &  -1     \\
\displaystyle \frac{k^\I  j^\prime_n\ell ( k^\I R )}{j_\ell ( k^\I R)}   &  \displaystyle - \kappa  \frac{k^\e h^\prime_\ell( k^\e R)}{ h_\ell  (k^\e R ) }
\end{bmatrix}.
\end{equation}
The unique solvability of \eqref{eq:systemes}--\eqref{eq:systemes3d} is ensured if the determinants $\mathcal D^{(2)}_n := \det \mathcal A_n$ and $\mathcal D^{(3)}_\ell := \det  \mathcal B_\ell$ 
given by
\begin{equation}\label{eq:determinants}
\begin{aligned}
& \mathcal D^{(2)}_n = \frac{k^\I  J^\prime_n ( k^\I R )}{J_n ( k^\I R)}   - \kappa     \frac{ k^\e  H^\prime_n( k^\e R )}{ H_n  (k^\e R  ) }   ,&\qquad &n \in \Z , \\
& \mathcal D^{(3)}_\ell =  \frac{k^\I j^\prime_\ell ( k^\I R)}{ j_\ell ( k^\I R )}   - \kappa       \frac{k^\e h^\prime_\ell( k^\e R )}{h_\ell  (k^\e R )},&\qquad &\ell \in \N,
\end{aligned}
\end{equation}
never vanish.

\begin{lemme}\label{lemme:determinant}
For all $n \in \Z$ and for all $\ell \in \N$, $ \mathcal D^{(2)}_n \neq 0$ and $\mathcal D^{(3)}_\ell \neq 0$.
\end{lemme}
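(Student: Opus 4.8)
The plan is to show that each determinant has non-zero imaginary part, which immediately forces it to be non-zero. The key structural observation is that the contrast $\kappa$ is a negative real number and the wave numbers $k^{\e}$, $k^{\I}$ are positive reals, so the first term in each determinant — the one built from Bessel functions of the real argument $k^{\I}R$ — is purely real (recall $J_n$, $j_\ell$, and their derivatives are real-valued on $\R$). Hence the imaginary part of $\mathcal D^{(2)}_n$ equals $-\kappa\, k^{\e}\,\myIm\!\bigl(H'_n(k^{\e}R)/H_n(k^{\e}R)\bigr)$, and similarly for $\mathcal D^{(3)}_\ell$. Since $-\kappa > 0$ and $k^{\e} > 0$, it suffices to prove that the logarithmic derivative of the Hankel function (resp. spherical Hankel function) evaluated at a positive real argument has non-zero imaginary part.

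First I would reduce the two cases to one: the spherical Hankel function satisfies $h_\ell(z) = \sqrt{\pi/(2z)}\, H_{\ell+1/2}(z)$, so $h'_\ell/h_\ell = H'_{\ell+1/2}/H_{\ell+1/2} - 1/(2z)$, and the extra term $-1/(2z)$ is real for $z = k^{\e}R > 0$; thus $\myIm(h'_\ell/h_\ell) = \myIm(H'_{\ell+1/2}/H_{\ell+1/2})$ at real arguments, and it is enough to handle $H_\nu$ for $\nu \geq 0$. Next I would compute this imaginary part explicitly. Writing $H_\nu = J_\nu + i Y_\nu$ with $J_\nu$, $Y_\nu$ real on $(0,\infty)$, one gets
\begin{equation*}
\myIm\!\left(\frac{H'_\nu(x)}{H_\nu(x)}\right) = \frac{J_\nu(x) Y'_\nu(x) - J'_\nu(x) Y_\nu(x)}{J_\nu(x)^2 + Y_\nu(x)^2} = \frac{W(x)}{J_\nu(x)^2 + Y_\nu(x)^2},
\end{equation*}
where $W$ is the Wronskian of $J_\nu$ and $Y_\nu$. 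By the standard Wronskian identity, $W(x) = J_\nu(x)Y'_\nu(x) - J'_\nu(x)Y_\nu(x) = 2/(\pi x)$, which is strictly positive for $x > 0$; moreover the denominator $J_\nu(x)^2 + Y_\nu(x)^2 = |H_\nu(x)|^2$ is finite and non-zero. Therefore $\myIm(H'_\nu(x)/H_\nu(x)) = 2/\bigl(\pi x\, |H_\nu(x)|^2\bigr) > 0$ for all $x > 0$ and all $\nu \geq 0$.

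Putting this together: for $d = 2$, $\myIm \mathcal D^{(2)}_n = -\kappa\, k^{\e}\cdot 2/\bigl(\pi k^{\e}R\,|H_n(k^{\e}R)|^2\bigr) = -2\kappa/\bigl(\pi R\,|H_n(k^{\e}R)|^2\bigr) \neq 0$, hence $\mathcal D^{(2)}_n \neq 0$; and for $d = 3$, $\myIm \mathcal D^{(3)}_\ell = -\kappa\, k^{\e}\cdot \myIm\bigl(h'_\ell(k^{\e}R)/h_\ell(k^{\e}R)\bigr) = -\kappa\, k^{\e}\cdot 2/\bigl(\pi k^{\e}R\,|H_{\ell+1/2}(k^{\e}R)|^2\bigr) \neq 0$, hence $\mathcal D^{(3)}_\ell \neq 0$. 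The only mild subtlety — the part I would be most careful about — is the need for $H_n(k^{\e}R)$ and $h_\ell(k^{\e}R)$ to be non-zero (so that the logarithmic derivatives, and the normalizations in the series \eqref{eq:decomposition2}, make sense); but this is automatic since $|H_\nu(x)|^2 = J_\nu(x)^2 + Y_\nu(x)^2$ and $Y_\nu$ never vanishes simultaneously with $J_\nu$ at the same real point (again by the Wronskian identity, which would otherwise give $0 = 2/(\pi x)$). No asymptotics of Bessel functions are needed here — just the Wronskian — so this lemma is genuinely elementary, in contrast with the regularity estimates later in the paper.
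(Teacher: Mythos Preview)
Your proof is correct and follows essentially the same approach as the paper: both arguments observe that the Bessel term is real and then show the Hankel logarithmic derivative has non-zero imaginary part via the Wronskian identity $J_\nu Y'_\nu - J'_\nu Y_\nu = 2/(\pi x)$. Your version is slightly more explicit (computing the exact value of the imaginary part and spelling out the reduction of the spherical case to half-integer order), while the paper phrases the same computation as a contradiction, but the mathematical content is identical.
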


\begin{proof}
See Appendix \ref{sec:bessel}.
\end{proof}

Thus we can uniquely solve \eqref{eq:systemes} and \eqref{eq:systemes3d}:
\begin{equation}\label{eq:determinants:solModes2d}
\begin{bmatrix}
 u^\I_n \\ u^\e_n
\end{bmatrix} = (\mathcal A_n)^{-1}  \begin{bmatrix}
 f_n  \\ g_n
\end{bmatrix}
= \frac{1}{\mathcal D^{(2)}_n} \begin{bmatrix}
\displaystyle - \kappa  k^\e \frac{H^\prime_n( k^\e R )}{ H_n  (k^\e R ) }  & 1 \\
\displaystyle - k^\I \frac{J^\prime_n ( k^\I R)}{ J_n ( k^\I )} & 1
\end{bmatrix}
\begin{bmatrix}
 f_n  \\ g_n
\end{bmatrix}, \qquad (d=2),
\end{equation}
and
\begin{equation}\label{eq:determinants:solModes3d}
\begin{bmatrix}
 u^\I_{\ell,m} \\ u^\e_{\ell,m} 
\end{bmatrix} = (\mathcal B_\ell)^{-1}  \begin{bmatrix}
 f_{\ell,m} \\ g_{\ell,m}
\end{bmatrix}
= \frac{1}{\mathcal D^{(3)}_\ell} \begin{bmatrix}
\displaystyle - \kappa  k^\e \frac{h^\prime_n( k^\e R )}{ h_n  (k^\e R ) }  & 1 \\
\displaystyle - k^\I \frac{j^\prime_n ( k^\I R)}{ j_n ( k^\I )} & 1
\end{bmatrix}
\begin{bmatrix}
 f_{\ell,m}  \\  g_{\ell,m}
\end{bmatrix},\qquad (d=3).
\end{equation}

\subsection{Asymptotic analysis}\label{subsec:asymptoticBall}

The $u^\I_n$ and $u^\e_n$ (resp. the $u^\I_{\ell,m}$ and $u^\e_{\ell,m}$) are now uniquely determined. We want to know the regularity of the corresponding solutions $u^\I$ and $u^\e$ given by \eqref{eq:decomposition1} and \eqref{eq:decomposition2}. This regularity is linked to the rate of decaying of $u^\e_n$ and $u^\I_n$ when $n \to \pm \infty$ (resp. of $u^\e_{\ell,m}$ and $u^\I_{\ell,m}$ when $\ell \to + \infty$). Indeed, one has the following characterization of  Sobolev spaces for $s \geq 0$ (see \textit{e.g.} \cite{iorio2001fourier}):
\begin{equation}\label{eq:charasobolev1}
 \begin{aligned}
& H^s(\mathbb S^1) = \left\{ u \in L^2(\mathbb S^1)\ \colon  \sum_{n \in \Z} \left(1+n^{2}\right)^s \left \vert u_n \right\vert^2 <  +\infty\right \}, \\
& H^s (\mathbb S^2) = \left\{ u \in L^2(\mathbb S^2)\ \colon \sum_{\ell \in \N} \sum_{m = -\ell}^{+\ell} \left(1+\ell^{2}\right)^s \left\vert u_{\ell,m} \right\vert^2 <  +\infty \right\},
 \end{aligned} 
\end{equation}
where $u_n := \langle u,\psi_n \rangle_{L^2(\mathbb S^1)}$ and $u_{\ell,m} := \langle\psi,\psi_{\ell,m} \rangle_{L^2(\mathbb S^2)}$. These definitions can be extended by duality to negative exponents:
\begin{equation}\label{eq:charasobolev2}
\begin{aligned}
& H^{-s}(\mathbb S^1) = \left\{ \phi\in \mathcal C^\infty(\mathbb S^1)^* \colon  \sum_{n \in \Z} \left(1+n^{2}\right)^{-s}  \vert \phi_n \vert^2 <  +\infty \right\}, \\
& H^{-s} (\mathbb S^2) = \left\{ \phi \in \mathcal C^\infty(\mathbb S^2)^* \colon \sum_{\ell \in \N} \sum_{m = -\ell}^{+\ell} \left(1+\ell^{2}\right)^{-s}\vert \phi_{\ell,m} \vert^2 <  +\infty \right\},
 \end{aligned} 
\end{equation}
where $\phi_n := \phi(\overline{\psi_{n}}) = \phi(\psi_{-n})$ and $\phi_{\ell,m}  := \phi(\overline{\psi_{\ell,m}})$. 

In the classical case of a transmission between two positive materials, it is enough to perform an asymptotic at order 0 to be able to conclude. For our problem, it is necessary to go further because the first terms of the asymptotic may cancel. Before doing the asymptotic analysis, the first thing to notice is that, for the 2d case, $J_{-n}(\cdot) = (-1)^n J_n(\cdot)$ and $H_{-n}(\cdot) = (-1)^n H_n(\cdot)$ for all $n \in \Z$ thus one just need to treat the case $n \to +\infty$. Moreover, we need some asymptotics of Bessel and Hankel functions:
\begin{prop}
Let $r>0$ and $N \in \N^*$. One has the asymptotics when $n \to + \infty$:
\begin{equation}\label{eq:asymptobessel}
\begin{aligned}
& J_n(r)  = \frac{r^n}{2^n n! } \left[ \sum_{k=0}^N \frac{ (-1)^k n! }{k!(n+k)!} \left( \frac{r}{2} \right)^{2k}+ \mathcal O\left(\frac{1}{n^{N+1}} \right)\right], \\
& J^\prime_n(r) = \frac{r^{n-1}}{2^n (n-1)! } \left[ \sum_{k=0}^N \frac{ (-1)^k (n+2k) (n-1)!}{k!(n+k)!} \left( \frac{r}{2} \right)^{2k}+ \mathcal O\left(\frac{1}{n^{N+1}} \right)\right], \\
& H_n(r)  = \frac{-i}{ \pi} \frac{2^n (n-1)!}{r^n} \left[\sum_{k = 0}^N \frac{ (n-k-1)!}{k!(n-1)!}  \left( \frac{r}{2} \right)^{2k}+\mathcal O\left(\frac{1}{n^{N+1}} \right)\right], \\
& H^\prime_n(r)  =  \frac{i}{\pi} \frac{2^n n!} {r^{n+1}} \left[\sum_{k = 0}^N \frac{ (n-2k)(n-k-1)!}{k!n!}    \left( \frac{r}{2} \right)^{2k}+\mathcal O\left(\frac{1}{n^{N+1}} \right)\right],
\end{aligned}
\end{equation}
and when $\ell \to +\infty$:
\begin{equation}\label{eq:asymptobessel3d}
\begin{aligned}
& j_\ell (r)  = \frac{r^\ell}{(2\ell+1)!!} \left[ \sum_{k=0}^N \frac{ (-1)^k (2\ell+1)!!}{k!(2\ell+2k+1)!!}  \left( \frac{r^2}{2} \right)^k+ \mathcal O\left(\frac{1}{\ell^{N+1}} \right)\right], \\
& j^\prime_\ell (r)  = \frac{\ell r^{\ell-1}}{(2\ell+1)!!} \left[ \sum_{k=0}^N \frac{ (-1)^k(\ell+2k)(2\ell+1)!! }{k!(2\ell+2k+1)!! \ell} \left( \frac{r^2}{2} \right)^k+ \mathcal O\left(\frac{1}{\ell^{N+1}} \right)\right], \\
& h_\ell (r)  = -i \frac{(2\ell-1)!!}{r^{\ell+1}} \left[ \sum_{k=0}^N \frac{ (2\ell-2k-1)!!}{k!(2\ell-1)!! }  \left( \frac{r^2}{2} \right)^k+ \mathcal O\left(\frac{1}{\ell^{N+1}} \right)\right], \\
& h^\prime_\ell (r)  = i  \frac{ (\ell+1) (2\ell-1)!!}{ r^{\ell+2}} \left[ \sum_{k=0}^N \frac{ (\ell+1-2k)(2\ell-2k-1)!!}{k!(\ell+1) (2\ell-1)!! }\left( \frac{r^2}{2} \right)^k+ \mathcal O\left(\frac{1}{\ell^{N+1}} \right)\right],
\end{aligned}
\end{equation}
where $!!$ stands for the double factorial, defined by $0!! = 1$, $p!! = 2 \times 4 \times \cdots \times p $ for $p \in \{2,4,6,\ldots\}$ and $p!! = 1 \times 3 \times \cdots \times p$ for $p \in \{1,3,5,\ldots\}$.
\end{prop}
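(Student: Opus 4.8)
\emph{Sketch of proof.} The statement follows from the classical ascending series of the functions involved: in each case one truncates the series after the $k=N$ term and estimates the tail, the key observation being that, once the leading prefactor has been factored out, the $k$-th coefficient is $\mathcal{O}(n^{-k})$ (resp. $\mathcal{O}(\ell^{-k})$) because it contains a ratio of (double) factorials of the type $\prod_{j=1}^{k}(n+j)^{-1}$. For the functions of the first kind one starts from the entire series
\[
J_n(r)=\sum_{k\ge 0}\frac{(-1)^k}{k!\,(n+k)!}\left(\frac{r}{2}\right)^{n+2k},
\qquad
j_\ell(r)=\sum_{k\ge 0}\frac{(-1)^k}{k!\,(2\ell+2k+1)!!}\,\frac{r^{\ell+2k}}{2^k} .
\]
Factoring out $r^n/(2^nn!)$ (resp. $r^\ell/(2\ell+1)!!$) yields exactly the summands of \eqref{eq:asymptobessel} and \eqref{eq:asymptobessel3d}, with general term carrying the factor $n!/(n+k)!=\prod_{j=1}^{k}(n+j)^{-1}$ (resp. $(2\ell+1)!!/(2\ell+2k+1)!!=\prod_{j=1}^{k}(2\ell+2j+1)^{-1}$); for $k\ge N+1$ this factor is $\le n^{-(N+1)}$ (resp. $\le (2\ell)^{-(N+1)}$), so the tail $\sum_{k\ge N+1}$ is in modulus $\le n^{-(N+1)}\sum_{k\ge 0}(r/2)^{2k}/k!=e^{r^2/4}n^{-(N+1)}$ (resp. $\le e^{r^2/2}(2\ell)^{-(N+1)}$), which is the announced remainder. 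Differentiating these series termwise — legitimate since they are entire — and using $n+2k\le 2(n+k)$ and $\ell+2k\le 2(\ell+k)$ in the tail estimate gives the expansions of $J_n'$ and $j_\ell'$, the explicit coefficients being read off by inspection.

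For the functions of the second kind one writes $H_n=J_n+iY_n$, $h_\ell=j_\ell+iy_\ell$ and inserts the classical ascending series of $Y_n$ (see \cite{watson1995treatise,olver2010nist}),
\[
Y_n(r)=-\frac{1}{\pi}\left(\frac{2}{r}\right)^{n}\sum_{k=0}^{n-1}\frac{(n-k-1)!}{k!}\left(\frac{r}{2}\right)^{2k}+E_n(r),
\]
where $E_n$ gathers the term $\tfrac{2}{\pi}\ln(r/2)J_n(r)$ and an entire series, so that $E_n(r)=\mathcal{O}((\ln n)\,r^{n}/(2^nn!))$. Matching with the prefactor $\tfrac{-i}{\pi}2^n(n-1)!/r^n$ of \eqref{eq:asymptobessel} (the powers of $2$, the powers of $r$ and the factor $1/\pi$ cancel), the finite sum of $Y_n$ produces exactly the asserted main sum $\sum_{k=0}^{n-1}\tfrac{(n-k-1)!}{k!\,(n-1)!}(r/2)^{2k}$, while the contributions of $J_n$ and of $E_n$ are $\mathcal{O}((\ln n)\,r^{2n}/(4^nn!(n-1)!))$, hence $o(n^{-M})$ for every $M$; finally the tail $\sum_{k=N+1}^{n-1}$ is bounded as before once one notes that for $n\ge 2(N+1)$ and $N+1\le k\le n-1$ one has $(n-k-1)!/(n-1)!=\prod_{j=1}^{k}(n-j)^{-1}\le\prod_{j=1}^{N+1}(n-j)^{-1}\le(n/2)^{-(N+1)}$. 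The spherical case is identical once the corresponding ascending series of $y_\ell$ is written (e.g. from $y_\ell(r)=(-1)^{\ell+1}\sqrt{\pi/(2r)}\,J_{-\ell-1/2}(r)$ and $\Gamma(\tfrac12-m)=(-2)^m\sqrt{\pi}/(2m-1)!!$), its dominant part being $-\tfrac{(2\ell-1)!!}{r^{\ell+1}}\sum_k\tfrac{(2\ell-2k-1)!!}{k!\,(2\ell-1)!!}(r^2/2)^k$ with $(2\ell-2k-1)!!/(2\ell-1)!!=\prod_{j=1}^{k}(2\ell-2j+1)^{-1}$, and all remaining pieces $o(\ell^{-M})$. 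Then $H_n'$ and $h_\ell'$ follow by termwise differentiation (or from $H_n'=\tfrac12(H_{n-1}-H_{n+1})$, $h_\ell'=h_{\ell-1}-\tfrac{\ell+1}{r}h_\ell$) and the same estimates.

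The only point requiring genuine care is the handling of $Y_n$ and $y_\ell$: one must single out the part of their ascending series responsible for the complete asymptotic expansion in $1/n$ — namely the finite sum of low powers of $r$ — and check that the remaining logarithmic and entire contributions, although nonzero, decay faster than any power of $n$. Once this separation is performed, all eight estimates reduce to the same one-line bound of a ratio of (double) factorials by a power of $1/n$ (resp. $1/\ell$) times $e^{r^2/4}$ (resp. $e^{r^2/2}$).
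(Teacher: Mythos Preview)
Your proof is correct and follows essentially the same route as the paper's: truncate the ascending series of $J_\nu$ and bound the tail via the factorial ratio, deduce $j_\ell$ from $J_{\ell+1/2}$ and $y_\ell$ from $J_{-\ell-1/2}$, then for $Y_n$ isolate the finite sum $\sum_{k=0}^{n-1}\frac{(n-k-1)!}{k!}(r/2)^{2k-n}$ and check that the logarithmic and entire pieces are negligible against the prefactor $2^n(n-1)!/r^n$. Your write-up is in fact slightly more explicit than the paper's on two points: you give the concrete constant $e^{r^2/4}$ in the tail bound, and you spell out the estimate $\prod_{j=1}^{k}(n-j)^{-1}\le (n/2)^{-(N+1)}$ for the tail $N+1\le k\le n-1$ of the finite sum in $Y_n$, which the paper leaves implicit.
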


\begin{proof}
See Appendix \ref{sec:bessel}.
\end{proof}
 
We can now give the asymptotics of the determinants $\mathcal D^{(2)}_n$ and $\mathcal D^{(3)}_\ell$:
\begin{prop}
One has
\begin{equation}\label{eq:asympDet2d}
\mathcal D^{(2)}_n \underset{n \to +\infty}{\sim}   \left\{
\begin{aligned}
\frac{1+\kappa}{R}\, n   & \qquad \text{if $\kappa  \neq -1$,}\\
R \left( (k^\e)^2-(k^\I)^2 \right)\, n^{-1}   & \qquad \text{if $\kappa  =-1$ and $k^\e \neq k^\I$,} \\
R  (k^\e)^2\,   n^{-2}  & \qquad \text{if $\kappa  =-1$ and $k^\e = k^\I$,} 
\end{aligned} \right.
\end{equation}
and 
\begin{equation}\label{eq:asympDet3d}
\mathcal D^{(3)}_\ell \underset{\ell \to +\infty}{\sim}   \left\{
\begin{aligned}
\frac{1+\kappa}{R}\ \ell    & \qquad \text{if $\kappa  \neq -1$,}\\
\frac{-1}{R}    & \qquad \text{if $\kappa  =-1$.}
\end{aligned} \right.
\end{equation}
\end{prop}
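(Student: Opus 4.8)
The plan is to reduce the whole statement to an asymptotic expansion, in powers of $1/n$ (resp.\ $1/\ell$), of the \emph{logarithmic derivatives}
\[
\frac{k^\I J^\prime_n(k^\I R)}{J_n(k^\I R)},\quad \frac{k^\e H^\prime_n(k^\e R)}{H_n(k^\e R)}\quad(d=2),\qquad
\frac{k^\I j^\prime_\ell(k^\I R)}{j_\ell(k^\I R)},\quad \frac{k^\e h^\prime_\ell(k^\e R)}{h_\ell(k^\e R)}\quad(d=3),
\]
of which, by \eqref{eq:determinants}, $\mathcal D^{(2)}_n$ and $\mathcal D^{(3)}_\ell$ are the $\kappa$-weighted differences. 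For each such quotient, numerator and denominator are given by \eqref{eq:asymptobessel}--\eqref{eq:asymptobessel3d} as one and the same prefactor times a finite power series in $(r/2)^2$ (resp.\ $r^2/2$) whose coefficients themselves expand in $1/n$ (resp.\ $1/\ell$). Dividing the two series — that is, expanding the reciprocal of the denominator geometrically — produces an asymptotic expansion of each logarithmic derivative in powers of $1/n$ (resp.\ $1/\ell$), valid to as many orders as one keeps in \eqref{eq:asymptobessel}--\eqref{eq:asymptobessel3d}. Since the first positive zero of $J_n$ exceeds $n$ and $H_n$ has no zero on $\R_+^*$ (and likewise for $j_\ell$, $h_\ell$), all these quotients make sense for $n$ (resp.\ $\ell$) large, which is all one needs.

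In dimension $2$ — where $J_{-n}=(-1)^nJ_n$ and $H_{-n}=(-1)^nH_n$ let us restrict to $n\to+\infty$ — carrying out the division (keeping the first few terms of each series) gives
\[
\frac{k^\I J^\prime_n(k^\I R)}{J_n(k^\I R)}=\frac{n}{R}-\frac{(k^\I)^2R}{2n}+\mathcal O\!\Big(\tfrac1{n^2}\Big),\qquad
\frac{k^\e H^\prime_n(k^\e R)}{H_n(k^\e R)}=-\frac{n}{R}+\frac{(k^\e)^2R}{2n}+\mathcal O\!\Big(\tfrac1{n^2}\Big).
\]
Substituting into \eqref{eq:determinants}, the $n^{1}$ coefficient of $\mathcal D^{(2)}_n$ is $(1+\kappa)/R$, which settles the case $\kappa\neq-1$. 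If $\kappa=-1$ it vanishes and the leading term becomes the $n^{-1}$ one, a nonzero multiple of $(k^\e)^2-(k^\I)^2$ (nonzero because $k^\e\neq k^\I$): this is the second case. If moreover $k^\e=k^\I$, that coefficient vanishes as well, and one pushes the two logarithmic-derivative expansions one order further — whence the need for a fixed finite number of terms of \eqref{eq:asymptobessel} — to find that the $n^{-2}$ coefficient of $\mathcal D^{(2)}_n$ is a nonzero multiple of $(k^\e)^2$, giving the third case of \eqref{eq:asympDet2d}.

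In dimension $3$ the mechanism is slightly different, and simpler, because of an \emph{asymmetry} between the two logarithmic derivatives: dividing the series of \eqref{eq:asymptobessel3d} yields $k^\I j^\prime_\ell(k^\I R)/j_\ell(k^\I R)=\ell/R+\mathcal O(1/\ell)$ but $k^\e h^\prime_\ell(k^\e R)/h_\ell(k^\e R)=-(\ell+1)/R+\mathcal O(1/\ell)$, so that $\mathcal D^{(3)}_\ell=\tfrac{1+\kappa}{R}\,\ell+\tfrac{\kappa}{R}+\mathcal O(1/\ell)$. If $\kappa\neq-1$ the linear term dominates; if $\kappa=-1$ the linear terms cancel, leaving the surviving constant $\tfrac{\ell}{R}-\tfrac{\ell+1}{R}=-\tfrac1R$ regardless of the values of $k^\e$ and $k^\I$. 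This is \eqref{eq:asympDet3d}; note that there is no degenerate sub-case to treat in $3$d.

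The only genuinely delicate point is the bookkeeping in the degenerate regimes $\kappa=-1$ (and, in $2$d, also $k^\e=k^\I$): one must carry the division of the two asymptotic series far enough, then verify that the first non-cancelling coefficient is indeed nonzero and equal to the constant announced in \eqref{eq:asympDet2d}--\eqref{eq:asympDet3d}, all the while checking that the remainders supplied by the Proposition on Bessel and Hankel asymptotics are of strictly smaller order than the terms retained. Once the logarithmic-derivative expansions are in hand, everything else is a direct manipulation of the explicit coefficients in \eqref{eq:asymptobessel}--\eqref{eq:asymptobessel3d}.
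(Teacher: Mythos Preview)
Your proposal is correct and follows essentially the same approach as the paper: the paper's own proof is the single sentence ``plugging \eqref{eq:asymptobessel} and \eqref{eq:asymptobessel3d} in the definitions \eqref{eq:determinants} of $\mathcal D^{(2)}_n$ and $\mathcal D^{(3)}_\ell$, one gets the results after tedious but straightforward calculations,'' and your write-up is precisely a structured account of those calculations via the logarithmic derivatives. Your observation that the $3$d case terminates after one cancellation because of the $\ell$ versus $\ell+1$ asymmetry between $j'_\ell/j_\ell$ and $h'_\ell/h_\ell$ is exactly the right explanation for why \eqref{eq:asympDet3d} has only two sub-cases.
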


\begin{proof}
Plugging \eqref{eq:asymptobessel} and \eqref{eq:asymptobessel3d} in the definitions \eqref{eq:determinants} of $\mathcal D^{(2)}_n$ and $\mathcal D^{(3)}_\ell$, one gets the results after tedious but straightforward calculations. 
\end{proof}

Thanks to \eqref{eq:determinants:solModes2d} (resp. \eqref{eq:determinants:solModes3d}), we can now deduce the asymptotics of $u_n^\I$ and $u_n^\e$ (resp. of $u_{\ell,m}^\I$ and $u_{\ell,m}^\e$):
\begin{prop}\label{prop:finalasymptotball}
For $d=2$, one has
\begin{equation}
\begin{bmatrix}
 u^\I_n \\ u^\e_n
\end{bmatrix} 
\underset{n \to +\infty}{\sim} \left\{ \begin{aligned}
\frac{1}{\kappa+1}\,\mathcal M_{n,\kappa}(0)
\begin{bmatrix}
 f_n  \\ g_n
\end{bmatrix} & \qquad \text{if $\kappa  \neq -1$,}\\ 
\frac{2}{R[(k^\e)^2-(k^\I)^2]}\,\mathcal M_{n,-1}(2)
\begin{bmatrix}
 f_n  \\ g_n
\end{bmatrix} & \qquad \text{if $\kappa  =-1$ and $k^\e \neq k^\I$,} \\
\frac{1}{R^2 (k^\e)^2}\,\mathcal M_{n,-1}(3)
\begin{bmatrix}
 f_n  \\ g_n
\end{bmatrix} & \qquad \text{if $\kappa  =-1$ and $k^\e = k^\I$,} \\
\end{aligned} \right. 
\end{equation}
and for $d=3$
\begin{equation}
\begin{bmatrix}
 u^\I_{\ell,m} \\  u^\e_{\ell,m} 
\end{bmatrix} 
\underset{\substack{\ell \to +\infty \\ -\ell \leq m \leq \ell}}{\sim} \left\{ \begin{aligned}
\frac{1}{\kappa+1}\,\mathcal M_{\ell,\kappa}(0)
\begin{bmatrix}
 f_{\ell,m}  \\ g_{\ell,m}
\end{bmatrix} & \qquad \text{if $\kappa  \neq -1$,}\\ 
   - \mathcal M_{\ell,-1}(1)
\begin{bmatrix}
 f_{\ell,m}  \\ g_{\ell,m}
\end{bmatrix}  & \qquad \text{if $\kappa  =-1$,}
\end{aligned} \right.
\end{equation}
where $\mathcal M_{m,\kappa}(p)$ is the matrix
\begin{equation}
\mathcal M_{m,\kappa}(p) := \begin{bmatrix}
\kappa \, m^p & R\,m^{p-1} \\ - m^p & R\,m^{p-1}
\end{bmatrix}.
\end{equation}
\end{prop}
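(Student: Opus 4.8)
The plan is to argue entirely from the explicit inversion formulas \eqref{eq:determinants:solModes2d} and \eqref{eq:determinants:solModes3d}, which are legitimate by Lemma~\ref{lemme:determinant}, so that the problem reduces to the entrywise asymptotics of the amplification matrices $(\mathcal A_n)^{-1}$ and $(\mathcal B_\ell)^{-1}$. Writing $\mathcal A_n=\left[\begin{smallmatrix}1 & -1\\ b_n & -\kappa a_n\end{smallmatrix}\right]$ with $a_n:=k^\e H'_n(k^\e R)/H_n(k^\e R)$ and $b_n:=k^\I J'_n(k^\I R)/J_n(k^\I R)$, the adjugate is $\left[\begin{smallmatrix}-\kappa a_n & 1\\ -b_n & 1\end{smallmatrix}\right]$: its first column consists of logarithmic derivatives of Bessel/Hankel functions, its second column is the constant vector. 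Since the determinants $\mathcal D^{(2)}_n$ and $\mathcal D^{(3)}_\ell$ are already controlled by \eqref{eq:asympDet2d}--\eqref{eq:asympDet3d}, everything hinges on the asymptotics of those logarithmic derivatives.

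So the first step is to establish, from \eqref{eq:asymptobessel} (equivalently, by controlling $J_{n+1}/J_n$ and $H_{n-1}/H_n$ to one further order), that for any fixed $r>0$
\[
b_n=\frac{n}{R}-\frac{(k^\I)^2R}{2(n+1)}+\mathcal O\!\left(\tfrac1{n^{3}}\right),
\qquad
a_n=-\frac{n}{R}+\frac{(k^\e)^2R}{2(n-1)}+\mathcal O\!\left(\tfrac1{n^{3}}\right),
\]
with analogous expansions for the spherical logarithmic derivatives from \eqref{eq:asymptobessel3d}. The structural observation that drives the proof is then: the first column of the adjugate is $\mathcal O(n)$ and the second is $\mathcal O(1)$, so after dividing by a determinant of size $\sim c\,n^{1-p}$ the first column becomes $\mathcal O(n^{p})$ and the second $\mathcal O(n^{p-1})$ — exactly the scaling of the two columns of $\mathcal M_{m,\kappa}(p)$, which serves only to package these two growth rates. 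It then remains to identify $p$ and the constant in each regime.

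Running through the cases: when $\kappa\neq-1$ the $\mathcal O(n)$ parts of $a_n,b_n$ survive, $\mathcal D^{(2)}_n\sim(1+\kappa)n/R$, and dividing gives $(\mathcal A_n)^{-1}\sim\frac1{\kappa+1}\mathcal M_{n,\kappa}(0)$ (and likewise in 3d with $\mathcal D^{(3)}_\ell\sim(1+\kappa)\ell/R$); when $\kappa=-1$ and $k^\e\neq k^\I$, the leading $-n/R$ of $-\kappa a_n$ and of $-b_n$ coincide, the determinant collapses to order $n^{-1}$ precisely because of this cancellation (leaving the $\propto(k^\e)^2-(k^\I)^2$ contribution), and dividing an $\mathcal O(n)/\mathcal O(1)$ column by an $\mathcal O(n^{-1})$ quantity raises both orders by $n^2$, yielding $\mathcal M_{n,-1}(2)$; the super-critical 2d case $\kappa=-1$, $k^\e=k^\I$ repeats this one step further, the $n^{-1}$-terms now also cancelling, $\mathcal D^{(2)}_n\sim R(k^\e)^2n^{-2}$, producing $\mathcal M_{n,-1}(3)$; and the 3d critical case is simpler since $\mathcal D^{(3)}_\ell\sim-1/R$ has order $\ell^0$, giving $-\mathcal M_{\ell,-1}(1)$. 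In each case one checks that the discarded terms are negligible entry by entry, so the matrix equivalence transfers to $\bigl[u^\I_n,u^\e_n\bigr]^{\top}=(\mathcal A_n)^{-1}\bigl[f_n,g_n\bigr]^{\top}$.

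The main obstacle I anticipate is keeping the bookkeeping honest in the critical and, above all, super-critical regime: since the leading term of each logarithmic derivative cancels in the sum defining the determinant, one must push \eqref{eq:asymptobessel}--\eqref{eq:asymptobessel3d} to a high enough order $N$ so that the $(k^\e)^2$ and $(k^\I)^2$ coefficients emerge with the correct constant and sign, and then simplify the resulting nested ratio of polynomials in $1/n$ (equivalently, $1/\ell$) without error. Once the leading behaviour of $(\mathcal A_n)^{-1}$ and $(\mathcal B_\ell)^{-1}$ has been secured this way, all the claimed equivalences fall out directly.
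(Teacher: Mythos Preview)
Your proposal is correct and follows exactly the route the paper intends: the paper gives no explicit proof of this proposition, simply stating it after the inversion formulas \eqref{eq:determinants:solModes2d}--\eqref{eq:determinants:solModes3d} and the determinant asymptotics \eqref{eq:asympDet2d}--\eqref{eq:asympDet3d} have been established, so the result is meant to follow by the very combination you describe (logarithmic-derivative asymptotics of $J_n,H_n,j_\ell,h_\ell$ divided by $\mathcal D^{(2)}_n,\mathcal D^{(3)}_\ell$). Your bookkeeping of the orders and the identification of the exponent $p$ in each regime are the correct mechanism, and the only labour is indeed pushing \eqref{eq:asymptobessel}--\eqref{eq:asymptobessel3d} to sufficient order in the critical and super-critical cases.
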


\subsection{Conclusion}
\label{sec:conclusion}

Thanks to the introduction of the matrix $\mathcal M_{m,\kappa}(k)$, it is really easy to read the asymptotics of $(u^\I_n , u^\e_n)$ and $(u^\I_{\ell,m},u^\e_{\ell,m})$ in term of the ones for $(f_n , g_n)$ and $(f_{\ell,m},g_{\ell,m})$. For instance, in dimension $d=2$, both $u^\I_n$ and $u^\e_n$ are equivalent to $C_1n^p f_n + C_2n^{p-1} g_n$ where $C_1$ and $C_2$ are non-zero constants. Thus, using the characterisations of Sobolev spaces \eqref{eq:charasobolev1} and \eqref{eq:charasobolev2}, we can give the final result of this section:
\begin{thm}\label{thm:ball}
Let $s>0$ be fixed. For $(f,g) \in\mf H^{s+p}(\Gamma)$, \eqref{eq:pbtrans:boule} has a unique solution $(u^\I\vert_\Gamma,u^\e\vert_\Gamma) \in H^s(\Gamma)^2$ where $p \in \N$ is called the \emph{order of regularity lost} and is given by the Table \ref{table:Ball}.
\end{thm}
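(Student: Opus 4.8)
The plan is to combine the modal characterization of Sobolev spaces on $\mathbb S^1$ and $\mathbb S^2$ from \eqref{eq:charasobolev1}--\eqref{eq:charasobolev2} with the asymptotics of the modal coefficients established in Proposition \ref{prop:finalasymptotball}. First I would set up the correspondence: since $(f,g) \in \mf H^{s+p}(\Gamma) = H^{s+p}(\Gamma) \times H^{s+p-1}(\Gamma)$, the characterizations tell us that $\sum_n (1+n^2)^{s+p}|f_n|^2 < +\infty$ and $\sum_n (1+n^2)^{s+p-1}|g_n|^2 < +\infty$ (and the analogous bi-indexed sums for $d=3$), while membership of $(u^\I|_\Gamma, u^\e|_\Gamma)$ in $H^s(\Gamma)^2$ is equivalent to $\sum_n (1+n^2)^s(|u^\I_n|^2 + |u^\e_n|^2) < +\infty$. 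So the theorem reduces to checking that these weighted square-summability conditions on the data force the corresponding one on the solution coefficients, and that the number $p$ appearing in Table \ref{table:Ball} is exactly the right (smallest) shift for which this works in each of the three regimes.

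Next I would unwind the matrix $\mathcal M_{m,\kappa}(p)$ appearing in Proposition \ref{prop:finalasymptotball}. In dimension $d=2$ the proposition gives, in each regime, $u^\I_n \sim C(\kappa\, n^p f_n + R\, n^{p-1} g_n)$ and $u^\e_n \sim C(-n^p f_n + R\, n^{p-1} g_n)$ with $C \neq 0$, so both components behave like a linear combination $\sim n^p f_n + n^{p-1} g_n$ up to nonzero constants, and similarly with $\ell$ in place of $n$ for $d=3$. Then the estimate
\begin{equation}
(1+n^2)^s |u^\I_n|^2 \lesssim (1+n^2)^{s+p}|f_n|^2 + (1+n^2)^{s+p-1}|g_n|^2
\end{equation}
(valid for $n$ large, with an analogous bound for $u^\e_n$ and for the $(\ell,m)$ sums) shows that $\sum_n (1+n^2)^s(|u^\I_n|^2 + |u^\e_n|^2) < +\infty$ follows immediately from $(f,g) \in \mf H^{s+p}(\Gamma)$; the finitely many small-$n$ (resp. small-$\ell$) terms are harmless since Lemma \ref{lemme:determinant} guarantees $\mathcal D^{(2)}_n, \mathcal D^{(3)}_\ell \neq 0$, hence each $2\times 2$ system is invertible and contributes a finite amount. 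Uniqueness of the solution is likewise a direct consequence of Lemma \ref{lemme:determinant}: each modal system \eqref{eq:systemes}--\eqref{eq:systemes3d} has a unique solution, so the full series solution is unique. Reading off the value of $p$ regime by regime — $p=0$ when $\kappa \neq -1$ (both $d=2,3$); $p=1$ for $d=3$, $\kappa=-1$; $p=2$ for $d=2$, $\kappa=-1$, $k^\e\neq k^\I$; $p=3$ for $d=2$, $\kappa=-1$, $k^\e= k^\I$ — reproduces Table \ref{table:Ball}.

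To get optimality (that $p$ cannot be lowered, which is implicit in calling it "the" order of regularity lost) I would argue in the converse direction: because the asymptotic equivalences in Proposition \ref{prop:finalasymptotball} are two-sided (the implied constants are nonzero), one can exhibit data $(f,g) \in \mf H^{s+p}(\Gamma)$ — for instance choosing $g_n$ to decay fast and $f_n \sim (1+n^2)^{-(s+p)/2}/\log$-type weights so that $f \in H^{s+p}$ but $f \notin H^{s+p+\eps}$ — for which $u^\I_n \sim c\, n^p f_n$ with $c \neq 0$, whence $(u^\I|_\Gamma, u^\e|_\Gamma) \notin H^{s'}(\Gamma)^2$ for any $s' > s$. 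The main obstacle, if any, is purely bookkeeping rather than conceptual: one must be careful in the critical and super-critical regimes that the cancellations in the leading terms of $\mathcal D^{(2)}_n$ (and correspondingly in the numerators of \eqref{eq:determinants:solModes2d}) are correctly tracked so that the net power of $n$ relating $u_n$ to $f_n$ and to $g_n$ is exactly $p$ and $p-1$ respectively, and in particular that no further hidden cancellation occurs between the $f_n$ and $g_n$ contributions for generic data. Given Proposition \ref{prop:finalasymptotball}, which already encodes all of this, the proof is then a short and routine application of \eqref{eq:charasobolev1}--\eqref{eq:charasobolev2}.
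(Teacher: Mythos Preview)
Your proposal is correct and follows essentially the same approach as the paper: combine the asymptotics of the modal coefficients encoded in the matrices $\mathcal M_{m,\kappa}(p)$ of Proposition \ref{prop:finalasymptotball} with the characterisations of $H^s(\mathbb S^{d-1})$ in \eqref{eq:charasobolev1}--\eqref{eq:charasobolev2}, invoking Lemma \ref{lemme:determinant} for the finitely many low modes and for uniqueness. The paper is terser (it simply notes that $u^\pm_n \sim C_1 n^p f_n + C_2 n^{p-1} g_n$ and then appeals to \eqref{eq:charasobolev1}--\eqref{eq:charasobolev2}), but your expanded argument, including the optimality remark, matches its reasoning.
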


This result is optimal in the sense that if $(f,g) \in H^{s+p}(\Gamma)$ but not in $(f,g) \in H^{s+p+\eps}$ for all $\eps>0$ then one cannot expect a better regularity than $(u^\I\vert_\Gamma,u^\e\vert_\Gamma) \in H^s(\Gamma)^2$.We recover the results of \cite{ola1995remarks} and \cite{nguyen2016limiting} for the dimension $d \geq 3$ when $\Omega^+$ is strictly convex.

\begin{rmk}\label{rmk:dim}
Actually one can do the same computations in any dimension $d \geq 3$ using generalised spherical harmonics (\cite{stein1971introduction}) and generalised spherical Bessel and Hankel functions $r^{1-d/2} J_{n+1-d/2}(r)$ and $r^{1-d/2} H_{n+1-d/2}(r)$. One can show that the conclusion of Theorem \ref{thm:ball} for $d> 3$ are the same as the ones for $d = 3$ (the only particular case is $d=2$).
\end{rmk}

One can reinterpret the conclusion of Theorem \ref{thm:ball} in term of external source, that is to say the original Helmholtz equation \eqref{eq:TE} becomes $ \nabla \cdot (\sigma \nabla u)  + \omega^2 \varsigma u =F$ where $F \in L^2(\Omega^\e)$. By standard regularity results, $u^\mathrm{inc}$ (defined now as the solution of $ \Delta u^\mathrm{inc} + (k^\e)^2 u^\mathrm{inc}= F$) has a $H^2$ regularity, so $(f,g) \in \mf H^{3/2}(\Gamma)$. Using Theorem \ref{thm:ball}, $u$ has the standard $H^2$ regularity (outside $\Gamma$) for the classical case but is less regular in the other cases. In dimension $d=2$, $u$ has only a $L^2$ regularity for the critical case and a $H^{-1}$ regularity for the super-critical case. For $d \geq 3$, $u$ has only a $H^1$ regularity for both the critical and the super-critical case.

\begin{rmk}\label{rmk:source}
One could argue that theses loses of regularity does not matter in practice, since $(f,g)$ often belongs to $\mathcal C^\infty(\Gamma)^2$ because, from \eqref{eq:pbondeinc}, $u^\mathrm{inc} \vert_\Gamma$ is smooth by standard regularity results (in the case of an external source $ \Delta u^\mathrm{inc} + (k^\e)^2 u^\mathrm{inc}= F$, this is true as soon as the support of $F$ is compactly embedded in $\Omega^\e$). As a consequence, $(u^\I\vert_\Gamma,u^\e\vert_\Gamma)$ belongs to $\mathcal C^\infty(\Gamma)^2$ too. However, the loses of regularity coming from the change of sign have an impact on numerical methods: the standard $H^1$ functional framework does not applies here when $\kappa = -1$ thus convergence of standard numerical method (for instance finite elements) are not ensured. We refer to \cite{carvalho2015etude} and references therein for more details on these issues. See also the end of Section \ref{subsubsec:conclusion} for a case where $(f,g)\in \mathcal C^\infty(\Gamma)^2$ is not enough to ensure smoothness of $(u^\I\vert_\Gamma,u^\e\vert_\Gamma)$.
\end{rmk}

\begin{table} 
\begin{center}
\begin{tabular}{|c|c|c|}
\hline
 &   $d=2$  & $d \geq 3$ \\
 \hline
 standard case $\kappa \neq -1$  & $ p = 0$ & $ p = 0$ \\
 \hline
 critical case $\kappa  = -1$ and $k^\e \neq k^\I$ & $ p = 2$ & $ p = 1$ \\
\hline 
 super-critical case $\kappa  =  -1$ and $k^\e = k^\I$  & $p = 3$  & $ p = 1$\\
 \hline
\end{tabular}
\end{center}
\caption{\label{table:Ball} Values of $p$ that appear in Theorem \ref{thm:ball} (see Remark \ref{rmk:dim} for $d>3$).}
\end{table}

\subsection{Numerical validations}

In order to verify the asymptotics given in Proposition \ref{prop:finalasymptotball}, we compute numerically the inverses of the matrices $\mathcal A_n$ and $\mathcal B_\ell$ defined in \eqref{eq:systemes} and \eqref{eq:systemes3d} using the MATLAB software for $n = \ell = 1,\ldots,100$, $R = 1$. For the standard case, we use $\kappa = -3$ and $k^\e = k^\I = 2$; for the critical case, we use $\kappa = -1$, $k^\e=1$ and $k^\I = 3$ and for the super-critical case, we use $\kappa = -1$ and $k^\e = k^\I = 2$.

The results are shown in Figure \ref{fig:asymptotics} in log-log scale. More precisely we plot (the logarithm of) the values of the entries of $(\mathcal A_n)^{-1}$ and $(\mathcal B_\ell)^{-1}$ as functions of (the logarithm of) $n$ and $\ell$ respectively. We recover the claimed results of Proposition \ref{prop:finalasymptotball}: the slopes of the curves are the same as the values of $p$ in $\mathcal M_{n,\kappa}(p)$ and $\mathcal M_{\ell,\kappa}(p)$ in each different case.

\begin{figure}[!htbp]  
\centering
\includegraphics[width=0.49\textwidth]{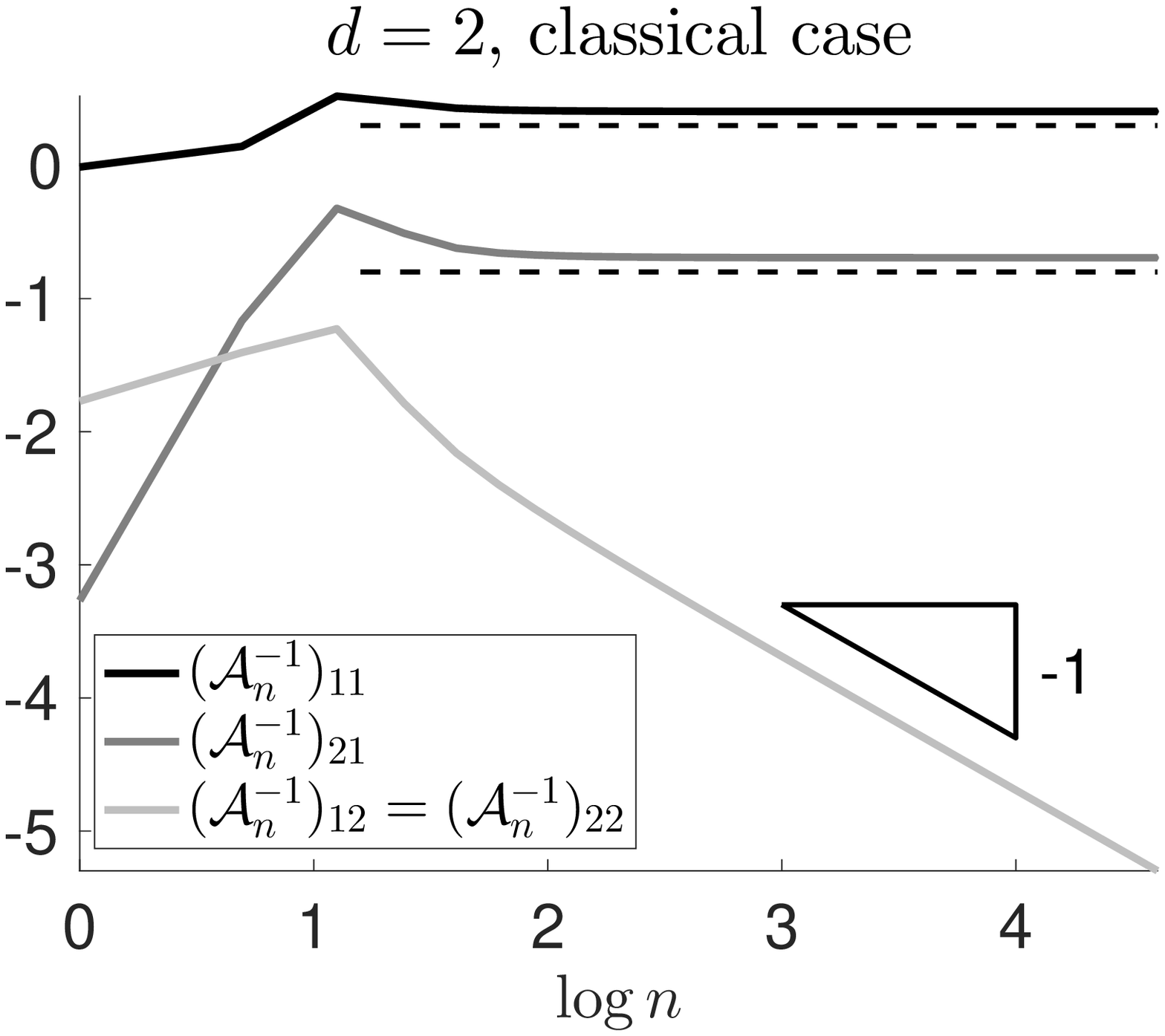} \hfill \includegraphics[width=0.49\textwidth]{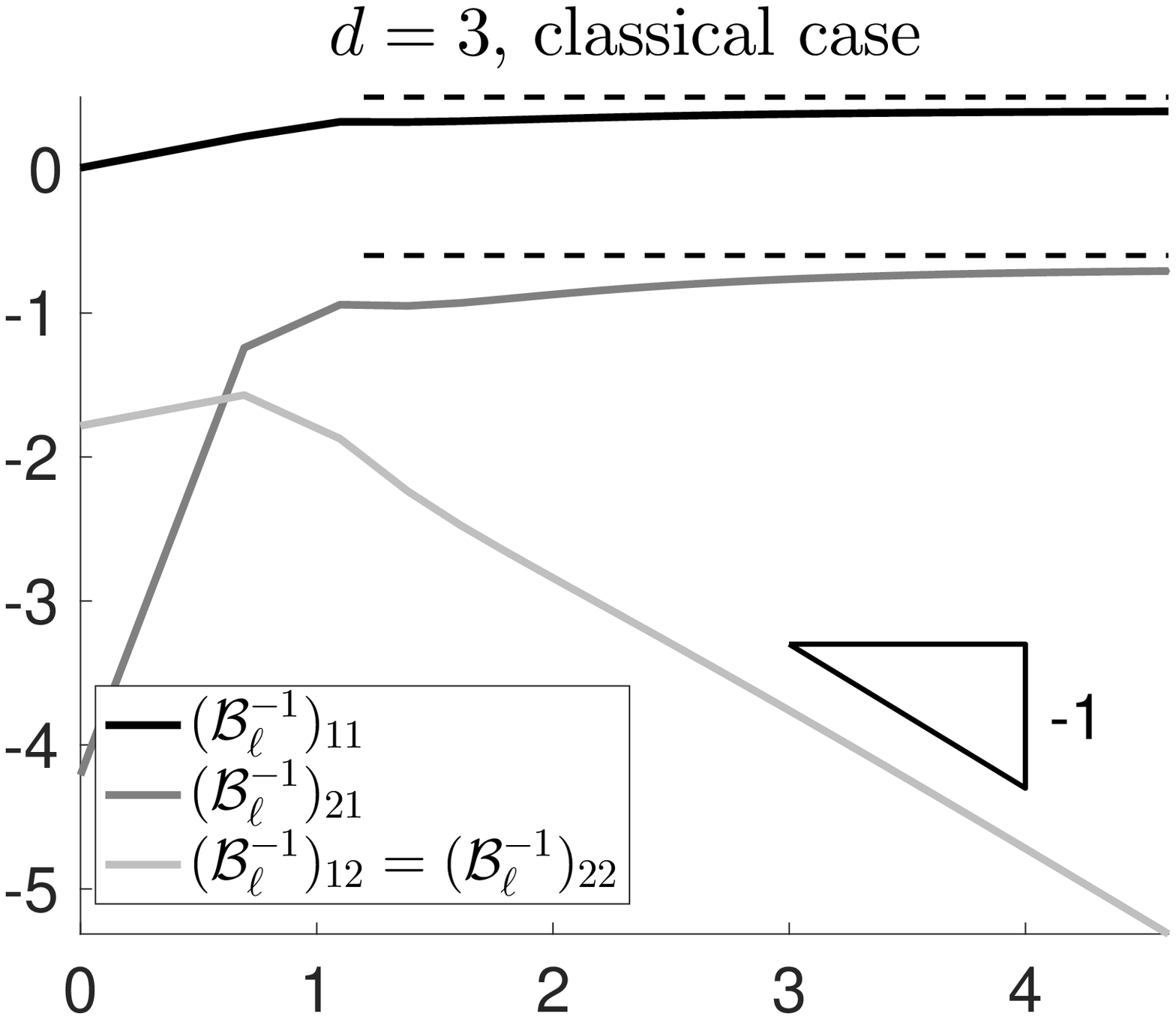} \\
\includegraphics[width=0.49\textwidth]{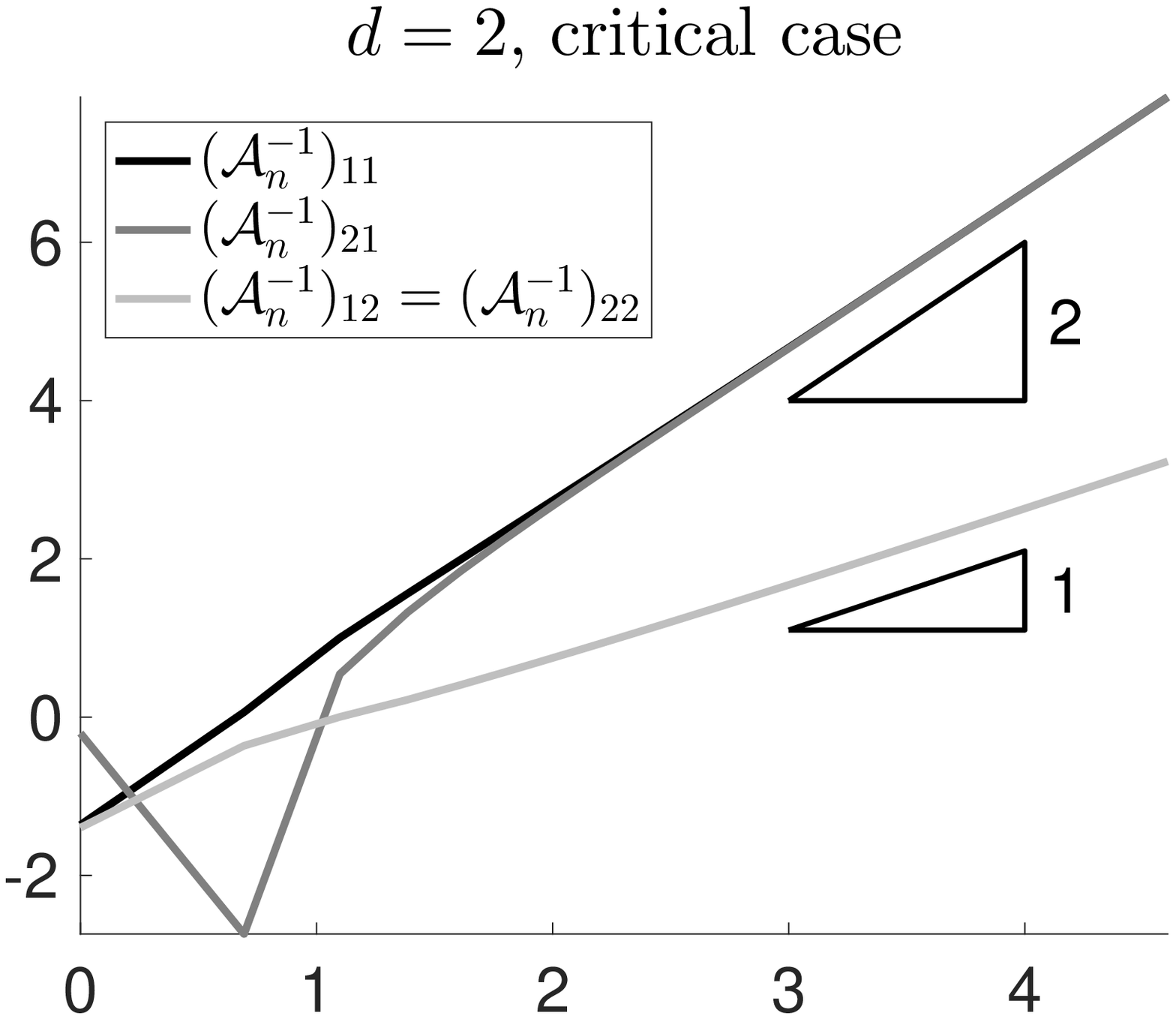} \hfill \includegraphics[width=0.49\textwidth]{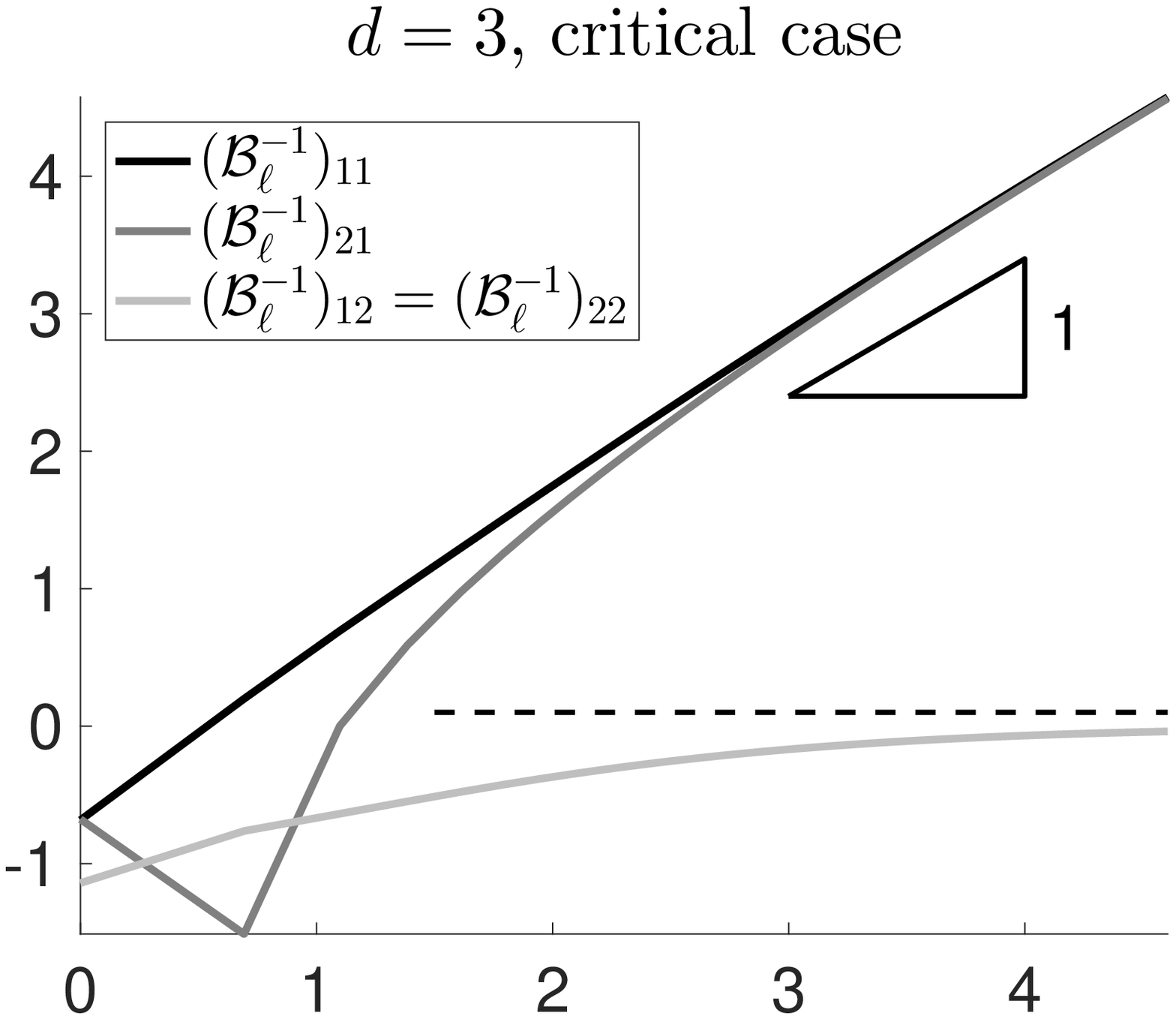} \\
\includegraphics[width=0.49\textwidth]{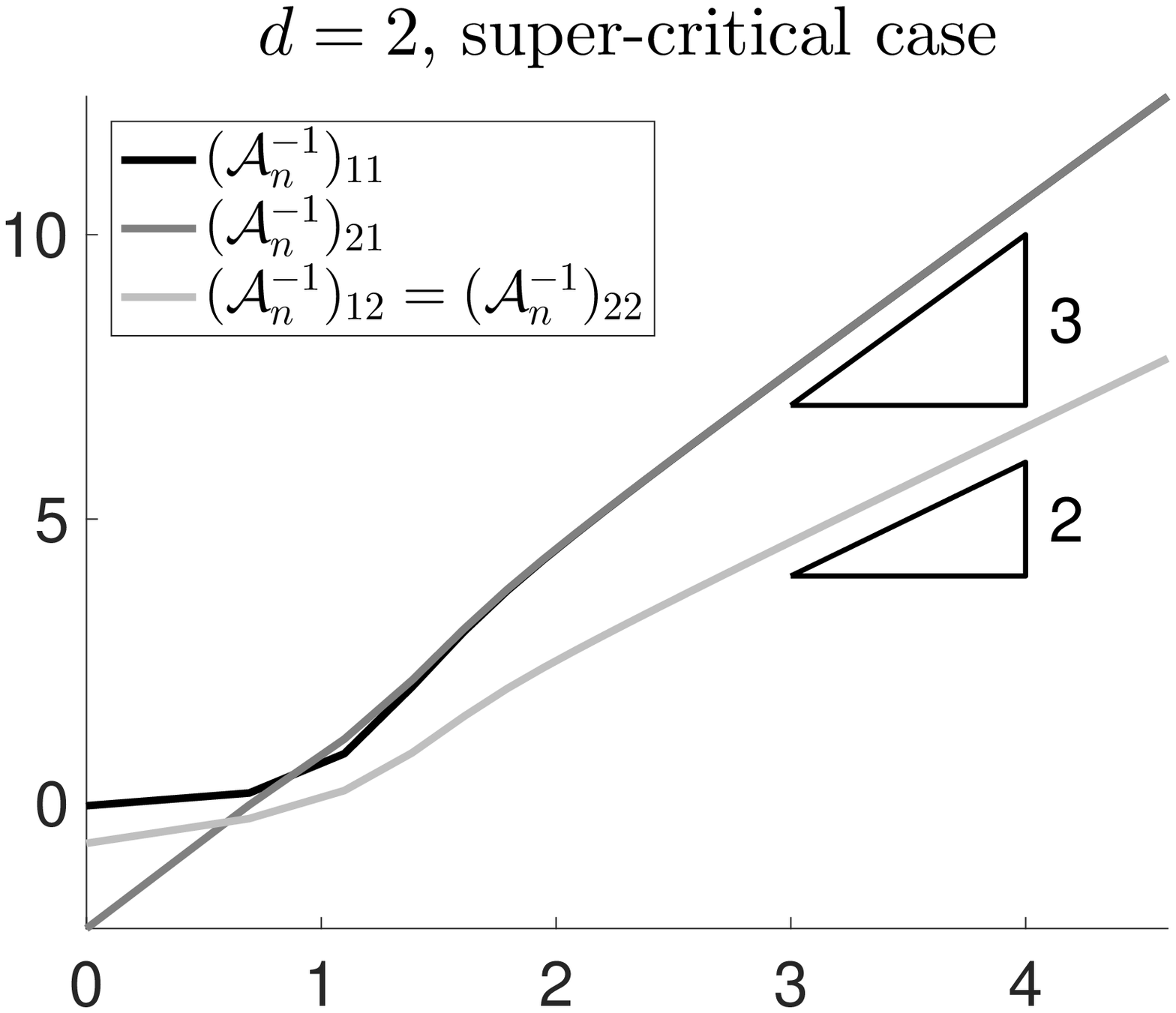} \hfill \includegraphics[width=0.49\textwidth]{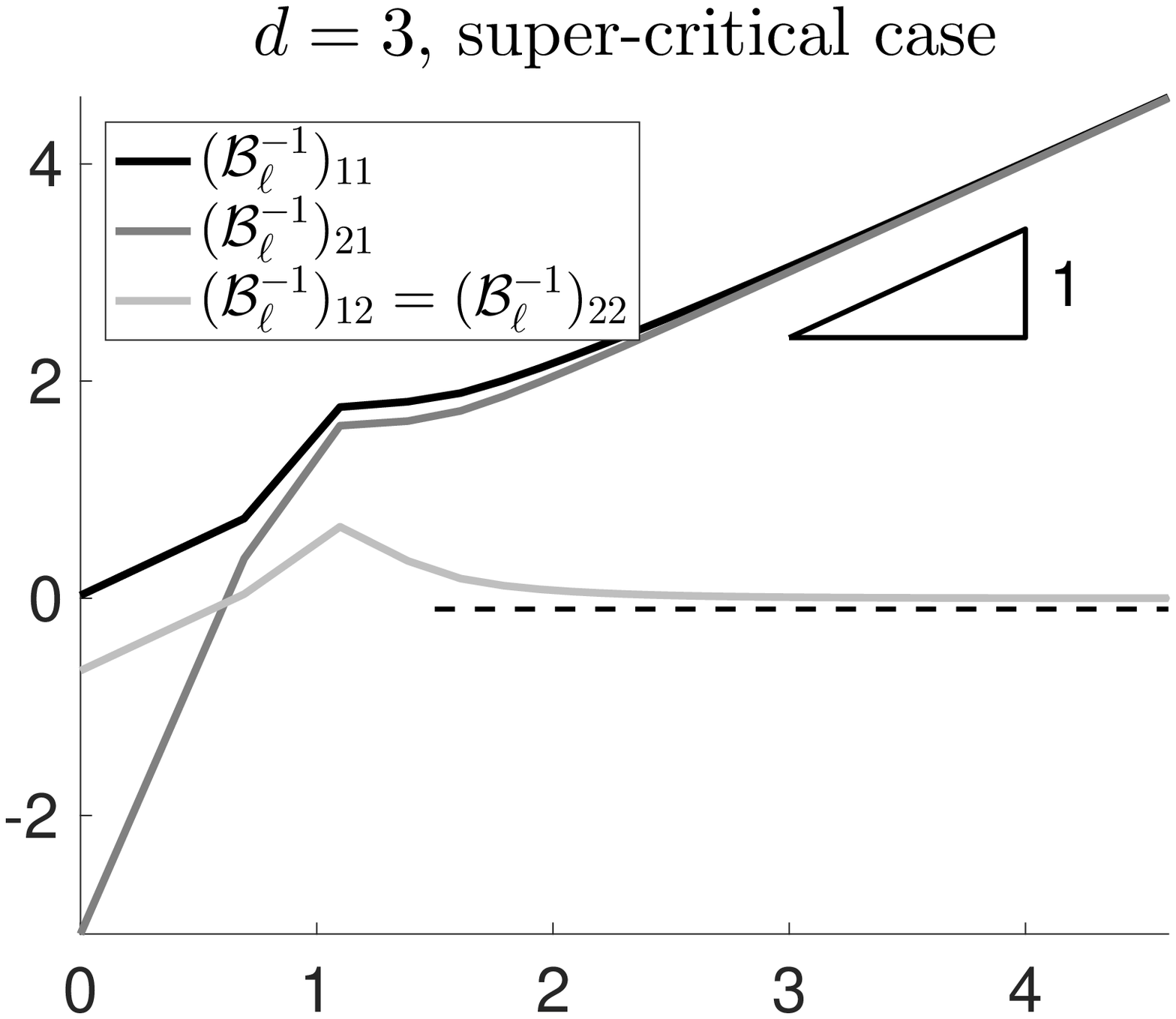} 
\caption{\label{fig:asymptotics}Plot of the entries of $(\mathcal A_n)^{-1}$ and $(\mathcal B_\ell)^{-1}$ as functions of $n$ and $\ell$ respectively, in log-log scale (notice that $(\mathcal A_n^{-1})_{12} = (\mathcal A_n^{-1})_{22}$ and $(\mathcal B_\ell^{-1})_{12} = (\mathcal B_\ell^{-1})_{22}$).}
\end{figure}

\subsection{When the curvature degenerates}

One could ask what happens when the radius $R$ tends to $+\infty$, namely when the curvature tends to 0, since it was pointed out in \cite{ola1995remarks} and \cite{nguyen2016limiting}  that the strict convexity of $\Omega^+$ plays a central role. We focus on what happens for the dimension $d=2$, similar results holds for $d \geq 3$. If one takes directly the limit $R \to +\infty$ in \eqref{eq:determinants:solModes2d} with fixed $n$, nothing interesting occurs. This is due to the fact that one needs to scale $n$ according to $R$, otherwise the limit problem could be seen as a zero-frequency problem. More precisely we must impose that the ratio between $n$ and $R$ remains constant. Doing so, one gets the following result:
\begin{prop}\label{prop:limitR}
Let $n \in \N^*$ and $R>0$ be such that the ratio $\xi := n/R$ is fixed and verifies $\xi > \max(k^\I,k^\e)$. Then one has
\begin{equation}\label{eq:limitR}
\mathcal D^{(2)}_n(R)  \underset{ \substack{R \to +\infty \\ n = R \xi}}{\longrightarrow} \sqrt{\xi^2-(k^\I)^2}+\kappa \sqrt{\xi^2-(k^\e)^2}.
\end{equation}
\end{prop}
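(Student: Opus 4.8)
The plan is to rewrite the determinant as $\mathcal D^{(2)}_n(R) = k^\I \Lambda_n(k^\I R) - \kappa\, k^\e M_n(k^\e R)$, where $\Lambda_n(z) := J^\prime_n(z)/J_n(z)$ and $M_n(z) := H^\prime_n(z)/H_n(z)$ are the logarithmic derivatives, and then to compute the limit of each term separately. Set $t^\pm := k^\pm/\xi$; the hypothesis $\xi > \max(k^\I,k^\e)$ means exactly $t^\I,t^\e \in (0,1)$, and since $n = R\xi$ one has $k^\I R = n\, t^\I$ and $k^\e R = n\, t^\e$. Moreover $J_n(n t) > 0$ for $t\in(0,1)$ (the first positive zero of $J_n$ exceeds $n$) and $H_n$ has no real zeros, so the quotients make sense for $n$ large. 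Everything then reduces to the two limits, for any fixed $t \in (0,1)$:
\[
\Lambda_n(n t) \xrightarrow[n\to+\infty]{} \frac{\sqrt{1-t^2}}{t}, \qquad M_n(n t) \xrightarrow[n\to+\infty]{} -\frac{\sqrt{1-t^2}}{t},
\]
since then $k^\pm\sqrt{1-(t^\pm)^2}/t^\pm = \sqrt{\xi^2-(k^\pm)^2}$, and \eqref{eq:limitR} follows.

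For the Bessel term I would invoke the Debye uniform asymptotic expansions of Bessel functions of large order in the regime ``argument below the order'' (see \cite{watson1995treatise}, \S8.4, or \cite{olver2010nist}, \S10.19). Parametrising $t = \sech\alpha$ with $\alpha > 0$, these give at leading order
\[
J_n(n\sech\alpha) \sim \frac{e^{n(\tanh\alpha - \alpha)}}{\sqrt{2\pi n\tanh\alpha}}, \qquad J^\prime_n(n\sech\alpha) \sim \sinh\alpha\,\frac{e^{n(\tanh\alpha - \alpha)}}{\sqrt{2\pi n\tanh\alpha}},
\]
whence $\Lambda_n(n\sech\alpha) \to \sinh\alpha = \sqrt{\cosh^2\alpha - 1} = \sqrt{1-t^2}/t$, which is the first limit. (Alternatively one can bypass uniform asymptotics by using the three-term recurrence $J_{n+1}(z) = (2n/z)J_n(z) - J_{n-1}(z)$: $J_n$ being, as $n\to\infty$, the recessive solution of this recurrence, the continued fraction it yields for $J_{n+1}(z)/J_n(z)$ converges, and for $z = nt$ one checks it tends to the root of $x^2 - (2/t)x + 1 = 0$ lying in $(0,1)$, namely $(1-\sqrt{1-t^2})/t$; then $\Lambda_n(nt) = n/z - J_{n+1}(z)/J_n(z) \to 1/t - (1-\sqrt{1-t^2})/t = \sqrt{1-t^2}/t$. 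This variant requires some care in interchanging the limit $n\to\infty$ with the tail of the continued fraction.)

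For the Hankel term, write $H_n = J_n + iY_n$ and use the companion Debye expansion $Y_n(n\sech\alpha) \sim -e^{-n(\tanh\alpha-\alpha)}/\sqrt{\tfrac12\pi n\tanh\alpha}$, together with its derivative. Since $\tanh\alpha - \alpha < 0$ for $\alpha > 0$, the quantities $Y_n(nt)$ and $Y^\prime_n(nt)$ grow exponentially in $n$ while $J_n(nt)$ and $J^\prime_n(nt)$ decay exponentially; hence $H_n(nt) = iY_n(nt)(1+o(1))$ and $H^\prime_n(nt) = iY^\prime_n(nt)(1+o(1))$, so $M_n(nt) = Y^\prime_n(nt)/Y_n(nt) + o(1)$. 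Running the computation of the previous paragraph with the exponent $-n(\tanh\alpha-\alpha)$ in place of $+n(\tanh\alpha-\alpha)$ merely flips the sign, giving $Y^\prime_n(n\sech\alpha)/Y_n(n\sech\alpha) \to -\sinh\alpha = -\sqrt{1-t^2}/t$, the second limit. Combining the two limits, $k^\I\Lambda_n(k^\I R) \to \sqrt{\xi^2-(k^\I)^2}$ and $k^\e M_n(k^\e R) \to -\sqrt{\xi^2-(k^\e)^2}$, so $\mathcal D^{(2)}_n(R) \to \sqrt{\xi^2-(k^\I)^2} + \kappa\sqrt{\xi^2-(k^\e)^2}$, which is \eqref{eq:limitR}.

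The only genuine obstacle is that the fixed-argument asymptotics of the preceding Proposition are useless here: one needs expansions uniform in the order as the argument grows proportionally to it, which forces one either to import the Debye expansions or to run the recurrence/continued-fraction argument carefully. Once that input is in place the rest is bookkeeping; the conceptual point worth emphasising is the exponential dominance of $Y_n$ over $J_n$ in the ``evanescent'' regime $t<1$, i.e.\ the selection of the recessive Hankel branch, which is exactly what pins down the minus sign in front of $\sqrt{\xi^2-(k^\e)^2}$ and explains why the two boundary terms end up with opposite signs.
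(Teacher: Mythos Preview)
Your proof is correct and follows essentially the same route as the paper's: both rewrite $\mathcal D^{(2)}_n$ in terms of the logarithmic derivatives $J'_n/J_n$ and $H'_n/H_n$ evaluated at $n\sech\alpha$, and then invoke the Debye uniform expansions to obtain the limits $\pm\sinh\alpha = \pm\sqrt{1-t^2}/t$. The only cosmetic difference is that the paper applies the Debye expansion to $H_n$ directly, whereas you pass through $Y_n$ and the exponential-dominance argument $H_n \sim iY_n$; your alternative sketch via the three-term recurrence and continued fractions is an additional remark not present in the paper.
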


In particular, the limit value in \eqref{eq:limitR} is not zero for the standard case $\kappa\neq -1$ and the critical case ($\kappa = -1$ and $k^\I \neq k^\e$), except maybe for one value of $\xi$, and vanishes for all $\xi > \max(k^\I,k^\e)$ in the super-critical case ($\kappa = -1$ and $k^\I = k^\e$). In this last case, at the limit $R \to +\infty$, the systems \eqref{eq:determinants:solModes2d} become non-invertible.

\begin{proof}
Since $\xi > \max(k^\I,k^\e)$, one could define  $\alpha := \sech^{-1}(k^\I/\xi)$ and $\beta := \sech^{-1}(k^\e/\xi)$. Thus one has
\begin{equation}
\mathcal D^{(2)}_n\left( \frac{n}{\xi} \right) = k^\I \frac{J^\prime_n(n \sech \alpha)}{J_n(n \sech \alpha)} + \kappa k^\e \frac{H^\prime_n(n \sech \beta)}{H_n(n \sech \beta)}.
\end{equation}
Using Debye's expansions (\cite{watson1995treatise,olver2010nist}), one has
\begin{equation}
\begin{aligned}
& J_n(n \sech \alpha) \underset{n \to +\infty}{\sim} \frac{e^{n(\tanh \alpha - \alpha)}}{\sqrt{2 \pi n \tanh \alpha}}, \quad 
J^\prime_n(n \sech \alpha) \underset{n \to +\infty}{\sim} e^{n(\tanh \alpha - \alpha)} \sqrt{\frac{\sinh 2 \alpha}{4 \pi n}}, \\
& H_n(n \sech \beta) \underset{n \to +\infty}{\sim} \frac{-i e^{n( \beta-\tanh \beta)}}{\sqrt{ \frac\pi2 n \tanh \beta}} \quad \text{and} \quad 
H^\prime_n(n \sech \beta) \underset{n \to +\infty}{\sim} e^{i n( \beta-\tanh \beta)} \sqrt{\frac{\sinh 2 \beta}{ \pi n}}.
\end{aligned}
\end{equation}
By standard hyperbolic trigonometric identities, one gets
\begin{equation}\label{eq:limitRtmp1}
\mathcal D^{(2)}_n\left( \frac{n}{\xi} \right) =
\frac{k^\I}{\sqrt{2}} \sqrt{\sinh 2\alpha\tanh \alpha} + \kappa \frac{k^\e}{\sqrt{2}} \sqrt{\sinh 2\beta \tanh \beta} 
 = k^\I \sinh \alpha - \kappa k^\e \sinh \beta.
\end{equation}
Now, using $\sinh \sech^{-1} z = \frac{\sqrt{1-z^2}}{z}$, $z \in (0,1)$, gives us
\begin{equation}
\sinh \alpha = \sinh \sech^{-1} \left( \frac{k^\I}{\xi} \right) = \frac{\sqrt{\xi^2-(k^\I)^2}}{k^\I} \quad \text{and} \quad
\sinh \beta = \frac{\sqrt{\xi^2-(k^\e)^2}}{k^\e}.
\end{equation}
Plugging this in \eqref{eq:limitRtmp1} gives \eqref{eq:limitR}. 
\end{proof}

\begin{rmk}\label{rmk:limitR}
The variable $\xi$ in Proposition \ref{prop:limitR} plays the role of the Fourier variable of a limit problem that is a transmission problem between two half-planes. The conditions $\xi > \max(k^\I,k^\e)$ means that we deal with evanescent waves. These facts must be linked to some results of Section \ref{sec:planar} (see Remark \ref{rmk:limitR2}).
\end{rmk}

\section{Some cases with flat interfaces}
\label{sec:planar}

Proposition \ref{prop:limitR} shows additional difficulties may appear when the curvature of the interface $\Gamma$ tends to 0, \textit{i.e.} when $\Gamma$ becomes flat. We shall now investigate more on this case. In order to stay in the pleasant framework of modal decomposition, we deal with waveguides. More precisely, now the dimension is $d \geq 2$ ($d$ can be greater than 3). We define a waveguide $\mathcal B := \R \times \Gamma $ where $\Gamma$ is a non-empty bounded connected open set of $\R^{d-1}$ with Lipschitz boundary. 

In the following, $x \in \R$ denotes the variable in the longitudinal direction and $\mf y \in \R^{d-1}$ the variables in the transverse section.

\begin{rmk}
Here we chose not to consider the case where $\Omega^\e$ and $\Omega^\I$ are half-spaces in order to avoid technical difficulties (that appear even without changes of sign): the standard technique would be to perform a Fourier transform with respect to $\mf y$. But since we are dealing with unbounded domains, solutions are not in $L^2$ and the radiation conditions to impose are not straightforward any more. It would require to use involved tools like generalised Fourier transforms (beyond the scope of this paper, see for instance \cite{weder2012spectral} that deals with perturbed stratified media or \cite{dhia2009diffraction} for perturbed open waveguides). 
\end{rmk}

Using separation of variables, one can show that a solution $u$ of the Helmholtz equation $\Delta u + k^2 u=0$ on $\mathcal B$ with some Boundary Conditions (BCs) on $\partial B$ that does not depend on $x$ (thus it is sufficient to impose them on $\partial \Gamma$) can be expressed as
\begin{equation}\label{eq:series}
u(x,\mf y) = \sum_{n \in \N} u_n e^{u \beta_n^+ x} \psi_n(\mf y),\qquad x \in \R,\ \mf y \in \Gamma.
\end{equation}
Here, the $(\psi_n)_n$ are the eigenfunctions of the 
standard eigenvalue problem:
 \begin{equation} \label{eq:vplaplacien} 
\left\{
\begin{aligned}
- \Delta_{\mf y} \psi & =  \lambda \psi, &  \quad &  \text{in $\Gamma$,}\\
\text{+ BCs} & & \quad &  \text{on $\partial \Gamma$.}\\
\end{aligned}
\right.
\end{equation}
We shall stay rather vague about the boundary conditions, but in order to perform a modal analysis, we have to suppose that they are choosen such that the operator $\Delta_{\mf y}$  is self-adjoint with compact resolvent (\cite{davies1996spectral}). For instance, this is the case for homogeneous Dirichlet or Neumann conditions. We assume that it is the case in the following. Then, the problem \eqref{eq:vplaplacien} admits a countable number of non-trivial solutions $(\lambda_n,\psi_n)$ where the $\lambda_n$ are the positive eigenvalues of finite multiplicity tending to $+\infty$ and the associated eigenfunctions $(\psi_n)_n$ form a Hilbert basis of $L^2(\Gamma)$. 

The $\beta_n^\pm$ in \eqref{eq:series} are solution of $(\beta_n)^2 = k^2 - \lambda_n$. We make the following choices for the square roots: we set, for all $n \in \N$,
\begin{equation}\label{eq:choixbeta}
\begin{aligned}
& \beta^\e_n := \begin{cases} \sqrt{ (k^\e)^2 - \lambda_n } &  \text{if $\lambda_n < (k^\e)^2 $ } \\
i  \sqrt{  \lambda_n - (k^\e)^2 } &  \text{if $\lambda_n > (k^\e)^2$ } \end{cases} \\
&  \beta^\I_n := \begin{cases} \sqrt{ (k^\I)^2 - \lambda_n } & \text{if $\lambda_n < (k^\I)^2$ } \\
-i  \sqrt{  \lambda_n - (k^\I)^2 } &  \text{if $\lambda_n > (k^\I)^2$.}  \end{cases}
\end{aligned}
\end{equation}

\begin{rmk}\label{rmk:cutoff0}
In order to avoid some technical issues that are intrinsic to waveguides but have nothing to do with the changes of sign, we suppose that $k^\e$ and $k^\I$ are not cut-off wave numbers, that is to say $\beta^\e_n  \neq 0$ et $\beta^\I_n \neq 0$ for all $n \in \N$, or equivalently $(k^\I)^2 \neq \lambda_n$ and $(k^\e)^2 \neq \lambda_n$ for all $n \in \N$. This could happen only for a finite numbers of $\beta^\e_n $ and $\beta^\I_n$ and does not change the conclusion of Theorems \ref{thm:guide2} and \ref{thm:guide} (see also Remark \ref{rmk:cutoff}).
\end{rmk}

\subsection{A case where the negative material is unbounded}
\label{sec:The unbounded case}

We first consider the case $\Omega^\I = (0,+\infty) \times \Gamma$. Its exterior is then $\Omega^\e = (-\infty,0) \times \Gamma$ (see Figure \ref{fig:pbguide:nd2}). 
\begin{figure}[!htbp]   
\centering
\includegraphics[width=0.75\textwidth]{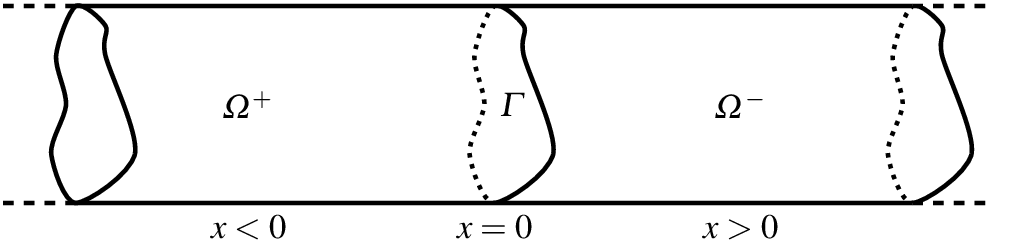}
\caption{\label{fig:pbguide:nd2}Geometry of the problem \eqref{eq:pbguide:nd2}.}
\end{figure}
Since $\Omega^\I$ is not bounded, one need to impose a radiation condition when $x$ tends to $+\infty$ but since $\Omega^\I$ is a negative material, the ``correct'' (\textit{i.e.} physically relevant) radiation condition is not the usual one. One can show that, due to the presence of negative coefficients, the radiation condition \eqref{eq:sommerfeld} is now (notice the change of sign)
\begin{equation}\label{eq:sommerfeldInverse}
\lim_{R \to + \infty} \int_{\vert \mf x \vert = R} \left\vert \frac{\partial }{\partial r}u + i k^\I u \right \vert^2 \diff r =0.
\end{equation}
For a justification of this, see the Appendix \ref{sec:radiationcondition} (see also \cite{malyuzhinets1951note,vinoles2016problemes,ziolkowski2001wave} for more details). 

We look for the following transmission problem:
\begin{equation} \label{eq:pbguide:nd2} 
\left\{
\begin{aligned}
 \Delta u^\e + (k^\e)^2 u^\e & = 0, &  \qquad &\text{in $(-\infty,0) \times \Gamma$,}\\
\Delta u^\I + (k^\I)^2 u^\I& = 0, &  \qquad &\text{in $(0,+\infty) \times \Gamma$,} \\
u^\I -u^\e &=  f,& \qquad &\text{on $\{0\} \times \Gamma$,}   \\
\partial_{\mf n}  u^\I  - \kappa \partial_{\mf n}  u^\e &= g,& \qquad &\text{on $\{0\} \times\Gamma$,} \\ 
\text{$u^\e$ verifies \eqref{eq:sommerfeld},}  & & \qquad & \text{when $x \to - \infty$,} \\
\text{$u^\I$ verifies \eqref{eq:sommerfeldInverse},}  & & \qquad & \text{when $x \to + \infty$,} \\
\text{+ BCs}  & & \qquad & \text{on $\R \times \partial \Gamma$.} 
\end{aligned}
\right.
\end{equation}

\subsubsection{Reduction to linear systems}

We now use the separation of variable \eqref{eq:series}. Taking into account the radiation conditions and the fact that we discard exponentially growing solutions, the solutions of \eqref{eq:pbguide:nd2} are given by
\begin{equation}\label{eq:decompositionnd2}
\begin{aligned}
& u^\e(x,\mf y) = \sum_{n \in \N}  u^\e_n e^{-i \beta^\e_n x}  \psi_n(\mf y),&\qquad &x < 0,\ \mf y \in \Gamma, \\
& u^\I(x,\mf y) = \sum_{n \in \N}  u^\I_n e^{-i \beta^\I_n x} \psi_n(\mf y) ,&\qquad& x>0, \ \mf y \in \Gamma, 
\end{aligned}
\end{equation}
where $u^\e_n$ and $u^\I_n$ are modal coefficients to determine. The transmission conditions of \eqref{eq:pbguide:nd2} write, using \eqref{eq:decompositionnd2}, as a countable family of $2 \times 2$ linear systems:
\begin{equation}\label{eq:systemesnd2}
\mathcal A_n \begin{bmatrix}
 u^\e_n \\ u^\I _n
\end{bmatrix} = 
\begin{bmatrix}
f_n \\ -i g_n
\end{bmatrix}, \quad \text{where} \qquad
\mathcal A_n := 
\begin{bmatrix}
  -1   &  1    \\
\kappa \beta^\e_n & - \beta^\I_n   
\end{bmatrix},
\end{equation}
for all $n \in \N$. Denote $\mathcal D_n :=   \beta^\I_n-\kappa \beta^\e_n$ the determinants associated to \eqref{eq:systemesnd2}. Contrary to Section \ref{sec:diskball}, these can actually vanish.

\begin{prop}\label{prop:detnd2}
For $\kappa \neq -1$ (standard case) or for $\kappa = -1$ and $k^\e \neq k^\I$ (critical case), the determinants $\mathcal D_n$ do not vanish except perhaps for a finite number of $n$. But for $\kappa = -1$ and $k^\e = k^\I$ (super-critical case), $\mathcal D_n $ vanishes for sufficiently large $n$.
\end{prop}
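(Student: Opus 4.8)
The plan is to analyze the determinant $\mathcal D_n = \beta^\I_n - \kappa \beta^\e_n$ using the explicit choices of square roots in \eqref{eq:choixbeta}, together with the fact that $\lambda_n \to +\infty$. Since only finitely many $\lambda_n$ can lie below $(k^\e)^2$ or $(k^\I)^2$, for all $n$ large enough we are in the evanescent regime $\lambda_n > \max((k^\e)^2,(k^\I)^2)$, in which \eqref{eq:choixbeta} gives $\beta^\e_n = i\sqrt{\lambda_n - (k^\e)^2}$ and $\beta^\I_n = -i\sqrt{\lambda_n - (k^\I)^2}$. Hence for large $n$,
\begin{equation}\label{eq:Dn-asymp}
\mathcal D_n = -i\sqrt{\lambda_n - (k^\I)^2} - i\kappa\sqrt{\lambda_n - (k^\e)^2} = -i\left(\sqrt{\lambda_n - (k^\I)^2} + \kappa\sqrt{\lambda_n - (k^\e)^2}\right).
\end{equation}

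From \eqref{eq:Dn-asymp} the three cases follow by elementary estimates. In the standard case $\kappa \neq -1$: writing $\sqrt{\lambda_n - (k^{\pm})^2} = \sqrt{\lambda_n}\,(1 + o(1))$, we get $\mathcal D_n = -i\sqrt{\lambda_n}\,(1+\kappa) + O(1/\sqrt{\lambda_n})$, which is nonzero for $n$ large since $1+\kappa \neq 0$; combined with the finitely many small-$n$ terms, $\mathcal D_n$ vanishes at most finitely often. In the critical case $\kappa = -1$, $k^\e \neq k^\I$: \eqref{eq:Dn-asymp} becomes $\mathcal D_n = -i\left(\sqrt{\lambda_n - (k^\I)^2} - \sqrt{\lambda_n - (k^\e)^2}\right)$; multiplying by the conjugate, this equals $-i\bigl((k^\e)^2-(k^\I)^2\bigr)/\bigl(\sqrt{\lambda_n - (k^\I)^2} + \sqrt{\lambda_n - (k^\e)^2}\bigr)$, which is $\sim -i\bigl((k^\e)^2-(k^\I)^2\bigr)/(2\sqrt{\lambda_n}) \neq 0$ for $n$ large. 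So again at most finitely many vanishing terms. In the super-critical case $\kappa = -1$, $k^\e = k^\I$: for all $n$ with $\lambda_n > (k^\e)^2$ we have $\sqrt{\lambda_n-(k^\I)^2} = \sqrt{\lambda_n-(k^\e)^2}$ exactly, so $\mathcal D_n = 0$ identically for all sufficiently large $n$.

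The main subtlety, which I would address with a sentence or two, is the bookkeeping around the finitely many propagative modes (those $n$ with $\lambda_n < (k^\e)^2$ or $\lambda_n < (k^\I)^2$) and the cut-off assumption of Remark \ref{rmk:cutoff0}: there $\beta^\e_n$ and/or $\beta^\I_n$ are real and nonzero, so $\mathcal D_n$ is a sum of a real and an imaginary part (when exactly one of the two is in the propagative regime) or purely real (when both are), and one simply notes these are a fixed finite set of $n$, consistent with the statement "except perhaps for a finite number of $n$". No genuine obstacle arises; the content is entirely in reading off \eqref{eq:choixbeta} and the growth $\lambda_n \to +\infty$, the only care being that in the super-critical case the cancellation is \emph{exact} (not merely asymptotic), which is what forces $\mathcal D_n = 0$ rather than just $\mathcal D_n \to 0$.
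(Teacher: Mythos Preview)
Your proof is correct and follows essentially the same approach as the paper: both reduce, via \eqref{eq:choixbeta} and $\lambda_n \to +\infty$, to analyzing the expression $\sqrt{\lambda_n - (k^\I)^2} + \kappa\sqrt{\lambda_n - (k^\e)^2}$ in the evanescent regime. The paper is marginally more explicit---it shows $\mathcal D_n \neq 0$ outright in the propagative and mixed regimes (rather than merely noting these are finitely many $n$) and, in the standard case, solves for the unique value $\lambda_n = (\kappa^2(k^\e)^2 - (k^\I)^2)/(\kappa^2-1)$ at which vanishing can occur---but these are refinements of the same argument, not a different route.
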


\begin{proof}
Let be $n \in \N$. Recall that we have excluded the cut-off wave numbers, $\beta^\I_n \neq 0$ and $\beta^\e_n \neq 0$ (or equivalently $\lambda_n \neq (k^\I)^2$ and $\lambda_n \neq (k^\e)^2$). We distinguish three cases:
\begin{enumerate}
\item $\lambda_n < \min((k^\I)^2, (k^\e)^2)$. Both $\beta^\I_n$ and $\beta^\e_n $ are positive numbers according to \eqref{eq:choixbeta}. Thus $\mathcal D_n =   \beta^\I_n-\kappa \beta^\e_n \neq 0$ since $\kappa < 0$. 
\item $\min((k^\I)^2, (k^\e)^2) < \lambda_n < \max((k^\I)^2, (k^\e)^2)$ (can only happens if $k^\I \neq k^\e$). Among $\beta^\I_n$ and $\beta^\e_n $ there are one non-zero real number and one non-zero imaginary number, so $\mathcal D_n \neq 0$.
\item $\lambda_n > \max((k^\I)^2, (k^\e)^2)$. Using \eqref{eq:choixbeta}, one has
\begin{equation}\label{eq:tmp1}
\mathcal D_n = 0 \Longleftrightarrow \sqrt{\lambda_{n}-(k^\I)^2} + \kappa \sqrt{\lambda_{n}-(k^\e)^2}   = 0.
\end{equation}
If $\kappa = -1$ and $k^\e = k^\I$, \eqref{eq:tmp1} holds. If $\kappa = -1$ and $k^\e \neq k^\I$, \eqref{eq:tmp1} does not hold. For the case $\kappa \neq -1$, \eqref{eq:tmp1} holds if and only if
\begin{equation}\label{eq:tmp2}
\lambda_{n} = \frac{\kappa^2  (k^\e)^2-(k^\I)^2}{\kappa^2-1}.
\end{equation}
That means that if such a $\lambda_n$ exists, it is unique, so there could be only a finite number of $n$ such that $\mathcal D_n = 0$ (the multiplicity of $\lambda_n$). 
\end{enumerate}
This ends the proof.
\end{proof}

When $\mathcal D_n$ vanishes, the corresponding system \eqref{eq:systemesnd2} has a non-empty kernel of dimension 1 spanned by $(1,1)^\mathrm{T}$. Consequently, the transmission problem \eqref{eq:pbguide:nd2} has a non-empty kernel (in the sense that there are non-trivial solutions of \eqref{eq:pbguide:nd2} for $(f,g) = (0,0)$). In the standard case, if it is non-empty, that is to say if \eqref{eq:tmp2} holds, its dimension is finite equal to the multiplicity of the corresponding $\lambda_n$. For the super-critical case, the kernel is always of infinite dimension because \eqref{eq:tmp1} holds as soon as $\lambda_n > \max((k^\I)^2, (k^\e)^2)$. In both cases, the kernel is spanned by the functions
\begin{equation}\label{eq:fonctionsNoyau}
G_n(x,\mf y) := \psi_n(\mf y) e^{- i \beta^\I_n \vert x \vert} = \psi_n(\mf y) e^{- i \kappa  \beta^\e_n \vert x \vert},\qquad  x \in \R,\quad \mf y \in \Gamma.
\end{equation} 
These functions are symmetric with respect to $x = 0$ and evanescent on each side on the interface, \textit{i.e.} they are localised near the interface $\Gamma$. Such solutions are called surface plasmons (\cite{maier2007plasmonics}). 

\begin{rmk}\label{rmk:limitR2}
Equation \eqref{eq:tmp1} is similar to the limit value of \eqref{eq:limitR}, where $\lambda_n$ plays the role of $\xi^2$. Furthermore, $\lambda_n > \max((k^\I)^2, (k^\e)^2)$ means that we are dealing with evanescent waves on both side of the interface, as mentioned in Remark \ref{rmk:limitR}.
\end{rmk}

\subsubsection{Asymptotic analysis}

Now we investigate the case where the determinant $\mathcal D_n$ does not vanish, (\textit{i.e.} not the super-critical case $\kappa= -1$ and $k^\e = k^\I$). As done in Section \ref{subsec:asymptoticBall}, we link the regularity of the solution $(u^\e,u^\I)$ to the decay of the modal coefficients  $(u^\e_n , u^\I _n)$ by introducing the space $ \mathfrak H^s(\Gamma)$, $s \geq 0$, defined as
\begin{equation}\label{eq:charasobolev0}
 \mathfrak H^s(\Gamma) := \left\{ u \in L^2(\Gamma)\ \colon  \sum_{n \in \N} (1+\lambda_n)^s \vert u_n \vert^2 <  +\infty\right \},
\end{equation}
where $u_n := \langle u,\psi_n \rangle_{L^2(\Gamma)}$. 
This definition can be extended by duality to negative exponents:
\begin{equation}\label{eq:charasobolev00}
 \mathfrak H^{-s}(\Gamma) := \left\{ \phi \in \mathcal C^\infty(\Gamma)^*  \colon  \sum_{n \in \N} (1+\lambda_n)^{-s} \vert \phi_n \vert^2 <  +\infty\right \},
\end{equation}
where $\phi_n := \phi(\overline{\psi_n})$. One can characterise these spaces using the interpolation theory between Hilbert spaces (see \cite{lions2012non,huet1976decomposition}). This characterisation crucially depends on the dimension but also on the boundary conditions imposed on $\partial \Gamma$. For instance (see \cite{hazard2008improved}), if $\Gamma = (0,1) \subset \R$ with homogeneous Neumann conditions $u^\prime(0) = u^\prime(1) = 0$, then 
\begin{equation}
\mathfrak H^s(\Gamma) = \begin{cases}
H^s(\Gamma) & \text{if $0 \leq s < 3/2$,} \\
\{ u \in H^s(\Gamma) \colon u^\prime(0) = u^\prime(1) = 0\}  & \text{if $3/2 \leq s < 7/2$,} \\
\{ u \in H^s(\Gamma) \colon u^\prime(0) = u^\prime(1) = u^{\prime\prime\prime}(0) = u^{\prime\prime\prime}(1)  = 0\}  & \text{if $7/2 \leq s < 11/2$,}
\end{cases}
\end{equation}  
and so on: the boundary condition $u^{2n-1}(0) = u^{2n-1}(1)  = 0$ appears as soon as it makes sense, \textit{i.e.} as soon as $s \geq (4n-1)/2$. In other words, the convergence of the series in \eqref{eq:charasobolev0} depends not only on the Sobolev regularity of $u$ but also on its behaviour on $\partial \Gamma$.  In the following, we will not try to characterise $\mathfrak H^s(\Gamma)$ since all the analysis remains the same for all dimension $d \geq 2$ and for any boundary conditions that makes $\Delta_{\mf y}$ self-adjoint with compact resolvent. Instead we stick with the spaces $\mathfrak H^s(\Gamma)$ and just focus on the Sobolev regularity through the asymptotic behaviour of $u_n$.

We now follow the steps of Section \ref{subsec:asymptoticBall}. Solving \eqref{eq:systemesnd2} leads to
\begin{equation}\label{eq:solutionnd2}
\mathcal A_n \begin{bmatrix}
 u^\e_n \\ u^\I _n
\end{bmatrix} = (\mathcal A_n)^{-1} 
\begin{bmatrix}
f_n \\ -i g_n
\end{bmatrix} = \frac{-1}{\mathcal D_n} 
\begin{bmatrix}
 \beta^\I_n &   1 \\
   \kappa \beta^\e_n   &  1
\end{bmatrix} \begin{bmatrix}
f_n \\ -i g_n
\end{bmatrix}.
\end{equation}

\begin{prop}
One has
\begin{equation}\label{eq:asympDet}
\mathcal D_n \underset{n \to +\infty}{\sim} \left\{
\begin{aligned}
 & i \sqrt{\lambda_n} (1+\kappa ) & \qquad &\text{if $\kappa  \neq -1$ (standard case)}\\
 & i   \frac{(k^\e)^2- (k^\I)^2}{2\sqrt{\lambda_n}}  & \qquad& \text{if $\kappa  =-1$ and $k^\e \neq k^\I$ (critical case).}
\end{aligned} \right.
\end{equation}
\end{prop}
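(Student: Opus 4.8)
The plan is elementary. I would first note that, since $\lambda_n \to +\infty$, there is an index $N$ with $\lambda_n > \max\big((k^\e)^2,(k^\I)^2\big)$ for every $n \geq N$, so that only the evanescent branches of \eqref{eq:choixbeta} enter an asymptotic statement and the finitely many smaller indices (none of which is a cut-off, by Remark~\ref{rmk:cutoff0}) play no role. For $n \geq N$ one then has $\beta^\e_n = i\sqrt{\lambda_n - (k^\e)^2}$ and $\beta^\I_n = -i\sqrt{\lambda_n - (k^\I)^2}$, hence
\begin{equation*}
\mathcal D_n = \beta^\I_n - \kappa\,\beta^\e_n = -i\left(\sqrt{\lambda_n - (k^\I)^2} + \kappa\sqrt{\lambda_n - (k^\e)^2}\right),
\end{equation*}
with the global sign dictated by \eqref{eq:choixbeta}.

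Everything then follows from the single expansion $\sqrt{\lambda_n - a} = \sqrt{\lambda_n} - \dfrac{a}{2\sqrt{\lambda_n}} + \mathcal O(\lambda_n^{-3/2})$, valid as $n\to+\infty$ since $a/\lambda_n\to 0$. In the standard case $\kappa\neq -1$, inserting it gives $\mathcal D_n = -i(1+\kappa)\sqrt{\lambda_n} + \mathcal O(\lambda_n^{-1/2})$, and since $1+\kappa\neq 0$ the leading coefficient does not vanish, which yields the first line of \eqref{eq:asympDet}. In the critical case $\kappa = -1$, $k^\e\neq k^\I$, the $\sqrt{\lambda_n}$ terms cancel; rather than push the expansion one further order, I would rationalise,
\begin{equation*}
\mathcal D_n = -i\left(\sqrt{\lambda_n - (k^\I)^2} - \sqrt{\lambda_n - (k^\e)^2}\right) = -i\,\frac{(k^\e)^2-(k^\I)^2}{\sqrt{\lambda_n - (k^\I)^2} + \sqrt{\lambda_n - (k^\e)^2}},
\end{equation*}
so that, the denominator being equivalent to $2\sqrt{\lambda_n}$ and $(k^\e)^2-(k^\I)^2\neq 0$, one obtains $\mathcal D_n \sim -i\big((k^\e)^2-(k^\I)^2\big)/(2\sqrt{\lambda_n})$, the second line of \eqref{eq:asympDet}.

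There is no real obstacle here: once the evanescent regime is isolated, the estimate is a one-line binomial expansion. The only points deserving a word are that an $n\to+\infty$ equivalence is insensitive to the finitely many indices with $\lambda_n\leq\max\big((k^\e)^2,(k^\I)^2\big)$, and that in each regime the announced leading coefficient must be checked to be nonzero — which is precisely why the standard and critical cases are separated, and why the super-critical case $\kappa=-1$, $k^\e=k^\I$ (for which the bracket vanishes identically, i.e. $\mathcal D_n=0$ for all large $n$, as recorded in Proposition~\ref{prop:detnd2}) is deliberately excluded from this proposition.
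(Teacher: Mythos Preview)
Your argument is essentially the paper's: restrict to the evanescent regime and expand $\sqrt{\lambda_n-a}$. The paper writes out the binomial series for both $\beta_n^\pm$ and reads off the first nonvanishing term; your rationalisation in the critical case is a clean shortcut but not a different idea.

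One point worth flagging: following the definition $\mathcal D_n=\beta^\I_n-\kappa\beta^\e_n$ stated in the text and the sign conventions of \eqref{eq:choixbeta}, your computation correctly produces $-i(1+\kappa)\sqrt{\lambda_n}$ and $-i\big((k^\e)^2-(k^\I)^2\big)/(2\sqrt{\lambda_n})$, i.e.\ the negatives of the lines displayed in \eqref{eq:asympDet}. The paper's own proof silently switches to $\mathcal D_n=\kappa\beta^\e_n-\beta^\I_n$. The overall sign is immaterial for the subsequent regularity analysis (only $|\mathcal D_n|$ enters), but you should not claim to have recovered \eqref{eq:asympDet} verbatim; either note the sign discrepancy or align your convention with the one actually used in the proof.
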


\begin{proof}
We can suppose that the $\lambda_n$ are large enough such that (see \eqref{eq:choixbeta})
\begin{equation}\label{eq:asympBeta}
\begin{aligned}
& \beta_n^\I = -i \sqrt{\lambda_n  - (k^\I)^2} =  -i \sqrt{\lambda_n} \left[  \sum_{j=0}^N (-1)^j \binom{1/2}{j} \left( \frac{(k^\I)^2 }{\lambda_n} \right)^j + \mathcal O\left( \frac{1}{(\lambda_n)^{N+1}} \right) \right], \\
& \beta_n^\e = i \sqrt{\lambda_n  - (k^\e)^2} =i \sqrt{\lambda_n} \left[  \sum_{j=0}^N (-1)^j \binom{1/2}{j} \left( \frac{(k^\e)^2 }{\lambda_n} \right)^j + \mathcal O\left( \frac{1}{(\lambda_n)^{N+1}} \right) \right],
\end{aligned}
\end{equation}
Then we get
\begin{equation}
\mathcal D_n =   \kappa  \beta^\e_n - \beta^\I_n = i \sqrt{\lambda_n} \left[ (1+\kappa ) - \frac{(k^\I)^2+\kappa  (k^\e)^2}{2\lambda_n} + \mathcal O\left(\frac{1}{\lambda_n^2} \right) \right].
\end{equation}
It is now easy to conclude: if $\kappa  \neq -1$, the first term in the asymptotic does not vanish and we get the desired result. Now if $\kappa  = -1$ and $k^\I\neq k^\e$ , this first term vanishes but the not the second one, and the result follows.
\end{proof}

We can now give the asymptotics of $u^\I_n$ and $u^\e_n$:
\begin{prop}
One has
\begin{equation}
\begin{bmatrix}
 u^\I_n \\ u^\e _n
\end{bmatrix} \underset{n \to +\infty}{\sim}  \left\{
\begin{aligned}
 &  
\frac{1}{1+\kappa} \mathcal M_{n,\kappa} (0)  \begin{bmatrix}
f_n \\ -i g_n
\end{bmatrix}
 & \qquad &\text{if $\kappa  \neq -1$ }\\
 &  
 \frac{2}{(k^\e)^2 - (k^\I)^2} \mathcal M_{n,-1}(2)  \begin{bmatrix}
f_n \\ -i g_n
\end{bmatrix}
 & \qquad& \text{if $\kappa  =-1$ and $k^\e \neq k^\I$} \\
\end{aligned} \right.
\end{equation}
where  $\mathcal M_{n,\kappa}(p)$ is the matrix
\begin{equation}
\mathcal M_{n,\kappa}(p) := \begin{bmatrix}
\kappa (\lambda_n)^{p/2} &   (\lambda_n)^{(p-1)/2} \\ 
- (\lambda_n)^{p/2} &   (\lambda_n)^{(p-1)/2}
\end{bmatrix}.
\end{equation}
\end{prop}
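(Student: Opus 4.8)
The plan is to read off the asymptotics directly from the closed form \eqref{eq:solutionnd2} of the modal coefficients, substituting the expansions that are already at our disposal. Explicitly, \eqref{eq:solutionnd2} gives
\[
\begin{bmatrix} u^\e_n \\ u^\I_n \end{bmatrix} = \frac{-1}{\mathcal D_n}\begin{bmatrix} \beta^\I_n & 1 \\ \kappa\beta^\e_n & 1 \end{bmatrix}\begin{bmatrix} f_n \\ -ig_n \end{bmatrix},
\]
so it suffices to track the leading behaviour of $\mathcal D_n$ (the previous Proposition, equation \eqref{eq:asympDet}) together with that of $\beta^\I_n$ and $\beta^\e_n$. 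Since $\lambda_n \to +\infty$, for $n$ large we are in the third regime of \eqref{eq:choixbeta}, hence $\beta^\I_n$ and $\beta^\e_n$ are purely imaginary and \eqref{eq:asympBeta} yields $\beta^\I_n = -i\sqrt{\lambda_n}\,(1+\mathcal O(1/\lambda_n))$ and $\beta^\e_n = i\sqrt{\lambda_n}\,(1+\mathcal O(1/\lambda_n))$. The last step is then to multiply the matrix out and identify the result with $\mathcal M_{n,\kappa}(p)$, after reordering the two rows so that $u^\I_n$ comes first.

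For the standard case $\kappa \neq -1$, the determinant has a genuine dominant term, $\mathcal D_n \sim i\sqrt{\lambda_n}(1+\kappa)$, so keeping only leading orders everywhere is legitimate: the first column $(\beta^\I_n,\kappa\beta^\e_n)^{\mathrm{T}}$ is $\sim i\sqrt{\lambda_n}(-1,\kappa)^{\mathrm{T}}$, which divided by $\mathcal D_n$ produces $\mathcal O(1)$ entries proportional to $1/(1+\kappa)$, while the constant second column divided by $\mathcal D_n$ produces $\mathcal O(\lambda_n^{-1/2})$ entries. Collecting the four entries and reordering rows gives exactly $\tfrac{1}{1+\kappa}\mathcal M_{n,\kappa}(0)$ acting on $(f_n,-ig_n)^{\mathrm{T}}$; all omitted terms contribute only relative corrections of size $\mathcal O(1/\lambda_n)$ and hence do not affect the stated equivalences.

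For the critical case $\kappa = -1$, $k^\e \neq k^\I$, the dominant $i\sqrt{\lambda_n}$ term of $\mathcal D_n$ cancels and one must retain the next order, $\mathcal D_n \sim i\,\frac{(k^\e)^2-(k^\I)^2}{2\sqrt{\lambda_n}}$. By contrast the numerator matrix is still governed by its leading order: the two entries of the first column, $\beta^\I_n$ and $-\beta^\e_n$, live in different rows and are not subtracted, so no cancellation occurs and the subleading terms of \eqref{eq:asympBeta} are irrelevant there. Dividing the first column ($\sim \mp i\sqrt{\lambda_n}$) by the now $\mathcal O(\lambda_n^{-1/2})$ quantity $\mathcal D_n$ yields $\mathcal O(\lambda_n)$ entries, and the constant column yields $\mathcal O(\lambda_n^{1/2})$ entries; after the row reordering this is precisely $\tfrac{2}{(k^\e)^2-(k^\I)^2}\mathcal M_{n,-1}(2)$ acting on $(f_n,-ig_n)^{\mathrm{T}}$, again with only $\mathcal O(1/\lambda_n)$ relative errors.

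I expect no serious obstacle: the statement follows from $2\times 2$ matrix algebra applied entrywise to equivalences already established. The one point requiring care is the bookkeeping in the critical case — one must recognise that the leading order of $\mathcal D_n$ has to be discarded (the change $\sqrt{\lambda_n}\rightsquigarrow 1/\sqrt{\lambda_n}$ in the denominator boosts every entry of the inverse matrix by exactly one power of $\lambda_n$), while the numerator stays at leading order, and that this boost is exactly the jump from the exponents of $\mathcal M_{n,-1}(0)$ to those of $\mathcal M_{n,-1}(2)$. It must also be noted that all these equivalences are uniform in the admissible data $(f_n,g_n)$, which is immediate since the matrix equivalence holds entrywise.
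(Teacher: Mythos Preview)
Your proposal is correct and follows exactly the same route as the paper, whose proof is the single sentence ``Combine \eqref{eq:solutionnd2}, \eqref{eq:asympDet} and \eqref{eq:asympBeta}.'' You have simply unpacked that instruction: insert the leading-order asymptotics of $\beta^\pm_n$ and of $\mathcal D_n$ into the closed form \eqref{eq:solutionnd2}, reorder the rows, and read off the result, taking care in the critical case that only the determinant drops one order while the numerator entries keep their leading behaviour.
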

\begin{proof}
Combine \eqref{eq:solutionnd2}, \eqref{eq:asympDet} and \eqref{eq:asympBeta}.
\end{proof}

\subsubsection{Conclusion}
\label{sec:conclusion2}

As we did in Section \ref{sec:diskball}, by gathering the results and using the characterisations \eqref{eq:charasobolev0} and \eqref{eq:charasobolev00}, we can conclude:

\begin{thm}\label{thm:guide2} 
Let $s > 0$ and consider the transmission problem \eqref{eq:pbguide:nd2}. Then
\begin{itemize}
\item if $\kappa  \neq -1$ (standard case), for $(f,g) \in \mathfrak H^s(\Gamma) \times \mathfrak H^{s-1}(\Gamma)$, \eqref{eq:pbguide:nd2} has a unique solution $(u^\I\vert_\Gamma,u^\e\vert_\Gamma) \in \mathfrak H^s(\Gamma)^2$ (no order of regularity lost), except in the exceptional situation where \eqref{eq:tmp1} holds; it has a kernel of finite dimension equal to the multiplicity of the corresponding $\lambda_n$ spanned by the evanescent functions \eqref{eq:fonctionsNoyau};
\item if $\kappa  = -1$ and and $k^\e \neq k^\I$ (critical case), for $(f,g) \in \mathfrak H^{s+2}(\Gamma) \times \mathfrak H^{s+1}(\Gamma)$ , \eqref{eq:pbguide:nd2} has a unique solution $(u^\I\vert_\Gamma,u^\e\vert_\Gamma) \in \mathfrak H^s(\Gamma)^2$ (2 orders of regularity lost);
\item if $\kappa  = -1$ and $k^\e = k^\I$ (super-critical case), \eqref{eq:pbguide:nd2} has a kernel of infinite dimension spanned by the evanescent functions \eqref{eq:fonctionsNoyau} for all $n$ such that $\lambda_n > \max((k^\I)^2,(k^\e)^2)$.
\end{itemize}
\end{thm}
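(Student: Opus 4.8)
The plan is to translate the transmission problem \eqref{eq:pbguide:nd2} into the countable family of $2\times 2$ systems \eqref{eq:systemesnd2} via the modal expansion \eqref{eq:decompositionnd2}, and then to read off solvability and regularity mode by mode. Since \eqref{eq:decompositionnd2} already encodes the volume equations, the two radiation conditions and the boundary conditions on $\R\times\partial\Gamma$, a pair $(u^\e,u^\I)$ of the form \eqref{eq:decompositionnd2} solves \eqref{eq:pbguide:nd2} if and only if the coefficient vectors $(u^\e_n,u^\I_n)^{\mathrm T}$ solve \eqref{eq:systemesnd2} for every $n\in\N$; moreover, by the characterisations \eqref{eq:charasobolev0}--\eqref{eq:charasobolev00}, the traces $(u^\I\vert_\Gamma,u^\e\vert_\Gamma)=\big(\sum_n u^\I_n\psi_n,\sum_n u^\e_n\psi_n\big)$ lie in $\mathfrak H^s(\Gamma)^2$ exactly when $\sum_n(1+\lambda_n)^s\big(|u^\I_n|^2+|u^\e_n|^2\big)<+\infty$. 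So everything reduces to (i) deciding when each $\mathcal A_n$ is invertible, and (ii) tracking the growth of $(u^\I_n,u^\e_n)$ in terms of $(f_n,g_n)$.

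First I would split $\N$ into the finite set $\mathcal N_0:=\{\,n : \lambda_n\leq\max((k^\I)^2,(k^\e)^2)\,\}$ and its infinite complement. By Proposition \ref{prop:detnd2}, in the standard and critical cases $\mathcal D_n\neq 0$ for all $n$ outside a finite set, and on the few exceptional indices the systems \eqref{eq:systemesnd2} are still invertible — except, only in the standard case, when $\lambda_n$ equals the single value \eqref{eq:tmp2}, where $\mathcal A_n$ has one-dimensional kernel spanned by $(1,1)^{\mathrm T}$ and range of codimension one. This is precisely the exceptional situation of the statement: the kernel of \eqref{eq:pbguide:nd2} is then spanned by the finitely many evanescent functions \eqref{eq:fonctionsNoyau} attached to that eigenvalue (as many as its multiplicity), and for general data one must add the matching finite number of linear compatibility conditions on $(f_n,g_n)$; when \eqref{eq:tmp2} is not realised, all systems are invertible and the solution is unique. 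Since $\mathcal N_0$ is finite, these low modes contribute only finitely many terms to the expansions and cannot affect membership in any $\mathfrak H^s(\Gamma)$, so the regularity count is governed entirely by the tail $n\to+\infty$.

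For the tail I would invoke the asymptotics of $(u^\I_n,u^\e_n)$ established in Section \ref{sec:The unbounded case}. In the standard case $(u^\I_n,u^\e_n)^{\mathrm T}\sim\frac{1}{1+\kappa}\,\mathcal M_{n,\kappa}(0)(f_n,-ig_n)^{\mathrm T}$, so $u^\I_n$ and $u^\e_n$ are each a fixed non-zero combination of $f_n$ and $(\lambda_n)^{-1/2}g_n$; hence $\sum_n(1+\lambda_n)^s\big(|u^\I_n|^2+|u^\e_n|^2\big)<+\infty$ is equivalent to $f\in\mathfrak H^s(\Gamma)$ and $g\in\mathfrak H^{s-1}(\Gamma)$, i.e. no loss. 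In the critical case the asymptotics involve $\mathcal M_{n,-1}(2)$, so $u^\I_n,u^\e_n\sim C_1\lambda_n f_n+C_2(\lambda_n)^{1/2}(-ig_n)$ with $C_1,C_2\neq 0$, and the same summability now becomes $f\in\mathfrak H^{s+2}(\Gamma)$, $g\in\mathfrak H^{s+1}(\Gamma)$: two orders lost; choosing data whose modal coefficients saturate these growth rates gives optimality. Finally, in the super-critical case Proposition \ref{prop:detnd2} gives $\mathcal D_n=0$ for every $n$ with $\lambda_n>\max((k^\I)^2,(k^\e)^2)$, and for each such $n$ the vector $(1,1)^{\mathrm T}\in\ker\mathcal A_n$ produces, through \eqref{eq:decompositionnd2}, the evanescent homogeneous solution $G_n$ of \eqref{eq:fonctionsNoyau}; these are linearly independent (distinct transverse profiles $\psi_n$), so the kernel is infinite-dimensional.

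The summability bookkeeping and the optimality construction are routine. The one point that needs care is the finite set $\mathcal N_0$ in the standard case: one must read off, from the rank-deficiency of $\mathcal A_n$ at the single eigenvalue \eqref{eq:tmp2}, both the finite-dimensional kernel and the finitely many solvability constraints, and then observe that, being finite in number, they leave the $\mathfrak H^s$-regularity conclusion untouched — so the dichotomy between "well-posed with a fixed number of lost derivatives" and "infinite-dimensional kernel" is dictated purely by whether $\mathcal D_n$ can vanish along the whole sequence $\lambda_n\to+\infty$.
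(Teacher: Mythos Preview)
Your proposal is correct and follows essentially the same approach as the paper: modal reduction to the systems \eqref{eq:systemesnd2}, invertibility via Proposition \ref{prop:detnd2}, the large-$n$ asymptotics of $(\mathcal A_n)^{-1}$ through $\mathcal M_{n,\kappa}(p)$, and the translation into $\mathfrak H^s$-regularity via \eqref{eq:charasobolev0}--\eqref{eq:charasobolev00}. In fact the paper only writes ``by gathering the results and using the characterisations \eqref{eq:charasobolev0} and \eqref{eq:charasobolev00}, we can conclude'', so your write-up is more explicit than the paper's while following the same line.
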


These results are summarised in Table \ref{table:nd2}. We have a strongly ill-posed problem for the super-critical case $\kappa  = -1$ and $k^\e = k^\I$ (for instance it escapes the Fredholm framework). We can also reinterpret the results in terms of volume source  as we done at the end of Section \ref{sec:conclusion} for the standard and the critical cases.

\begin{table} 
\centering
\begin{tabular}{|c|c|c|}
\hline
 & $\kappa \neq -1$ & $ \kappa = -1$ \\
 \hline
 $k^\e \neq k^\I$ & 0 & 2\\
 \hline
 $k^\e = k^\I$ & 0 & kernel of infinite dimension \\
 \hline
\end{tabular}
\
\caption{\label{table:nd2} orders of regularity lost solving \eqref{eq:pbguide:nd2}.}
\end{table}

\begin{rmk}\label{rmk:cutoff}
As claimed before, excluding cut-off wave numbers does not change the conclusion of the Theorem \ref{thm:guide2}. Indeed, it would eventually just add a finite numbers of elements to the kernel.
\end{rmk}

\subsection{A case where the negative material is bounded}

The previous situation is in some sense the ``worst'' we can encounter. Let us take a look to a case where $\Omega^\I$ is bounded. For instance, consider $\Omega^\I = (0,2L)$, with $L>0$, so that $\Omega^\e = (-\infty,-0) \cup (2L,+\infty)$. For this problem, it is more convenient to decompose the solution as the sum of two functions that are respectively symmetric and skew-symmetric (with respect to $x = L$). Doing so, our problem boils down to the study of two problems with $\Omega^\I = (0,L)$ and $\Omega^\e = (-\infty,0)$ (see Figure \ref{fig:pbguide:nd}), with the addition of an homogeneous Dirichlet condition (resp. homogeneous Neumann condition) at $x = L$ corresponding to the skew-symmetric part (resp. symmetric part). In the following, we focus on the Dirichlet case, however all the conclusions still hold for the Neumann case, thus for the original problem  $\Omega^\I = (0,2L)$.

The transmission problem we look for is (see Figure \ref{fig:pbguide:nd})
 \begin{equation} \label{eq:pbguide:nd} 
\left\{
\begin{aligned}
\Delta u^\e + (k^\e)^2 u^\e& = 0, &  \qquad &\text{in $(-\infty,0) \times \Gamma$,}\\
\Delta u^\I + (k^\I)^2 u^\I & = 0, &  \qquad &\text{in $(0,L) \times \Gamma$,} \\
u^\I -u^\e &=  f,& \qquad &\text{on $\{0\} \times \Gamma$,}   \\
\partial_{\mf n}  u^\I  - \kappa  \partial_{\mf n}  u^\e &= g,& \qquad &\text{on $\{0\} \times\Gamma$,} \\ 
u^\I & =0,& \qquad &\text{on $\{L\} \times \Gamma$,}   \\
\text{$u^\e$ verifies \eqref{eq:sommerfeld},}  & & \qquad & \text{when $\vert \mf x \vert \to - \infty$,}\\
\text{+ BCs} & & \qquad & \text{on $\R \times \Gamma$.}
\end{aligned}
\right.
\end{equation}

\begin{figure}[!htbp]   
\centering
\includegraphics[width=0.75\textwidth]{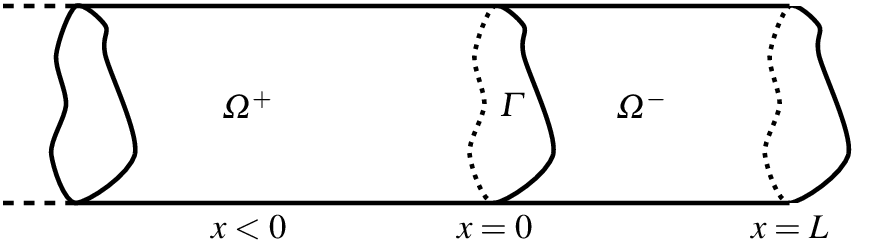}
\caption{\label{fig:pbguide:nd}Geometry of the problem \eqref{eq:pbguide:nd}.}
\end{figure}

\subsubsection{Reduction to linear systems}

Following the same steps as before, we look for solutions under the form
\begin{equation}\label{eq:decompositionnd}
\begin{aligned}
& u^\e(x,\mf y) = \sum_{ n \in \N} u^\e_n  e^{-i \beta^\e_n x} \psi_n(\mf y) ,&\qquad &x < 0,\ \mf y \in \Gamma, \\
& u^\I(x,\mf y) = \sum_{ n \in \N} \left( u^\I_{n,+} e^{i \beta^\I_n x} + u^\I_{n,-} e^{-i \beta^\I_n x} \right) \psi_n(\mf y) ,&\qquad& x \in (0,L), \ \mf y \in \Gamma,
\end{aligned}
\end{equation}
where $\beta^\I_n$ et $\beta^\e_n$ are defined by \eqref{eq:choixbeta}. The transmissions conditions of \eqref{eq:pbguide:nd} and the Dirichlet boundary condition at $x = L$ leads to a countable family of $3 \times 3$ linear systems:
\begin{equation}\label{eq:systnd} 
\mathcal A_n
\begin{bmatrix}
u^\e_{n} \\ u^\I_{n,+} \\ u^\I_{n,-} 
\end{bmatrix}
=
\begin{bmatrix}
f_n \\ g_n \\ 0
\end{bmatrix}, \quad \text{where} \quad 
\mathcal A_n :=
\begin{bmatrix}
-1 & 1  & 1   \\
\kappa  \beta^\e_n  &  \beta^\I_n &- \beta^\I_n  \\
 0 & e^{i \beta^\I_n L}  &   e^{- i \beta^\I_n L}   \\
\end{bmatrix}.
\end{equation}
The determinants $\mathcal D_n$ associated to \eqref{eq:systnd} are
\begin{equation}\label{eq:defdeterminants}
\mathcal D_n := -2  \beta^\I_n \cos( \beta^\I_n L)+ 2i \kappa \beta^\e_n \sin( \beta^\I_n L).
\end{equation}

\begin{prop}
For all $n \in \N$, one has $\mathcal D_n \neq 0$ except perhaps for a finite number of $n$.
\end{prop}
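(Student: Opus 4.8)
The plan is to analyze the transcendental equation $\mathcal{D}_n = 0$ using the asymptotic behaviour of $\beta_n^\I$ and $\beta_n^\e$ as $n \to +\infty$, distinguishing the regime $\lambda_n > \max((k^\I)^2, (k^\e)^2)$ (which holds for all but finitely many $n$, since $\lambda_n \to +\infty$) from the finitely many small-$\lambda_n$ cases. In the large-$\lambda_n$ regime, $\beta_n^\I = -i\sqrt{\lambda_n - (k^\I)^2}$ is purely imaginary with negative imaginary part and $\beta_n^\e = i\sqrt{\lambda_n - (k^\e)^2}$ is purely imaginary with positive imaginary part. Writing $t_n := \sqrt{\lambda_n - (k^\I)^2} > 0$ and $s_n := \sqrt{\lambda_n - (k^\e)^2} > 0$, the determinant \eqref{eq:defdeterminants} becomes
\begin{equation}
\mathcal{D}_n = 2i t_n \cos(i t_n L) - 2\kappa s_n \sin(i t_n L) = 2i t_n \cosh(t_n L) - 2i\kappa s_n \sinh(t_n L),
\end{equation}
so that $\mathcal{D}_n = 0$ is equivalent to $t_n \cosh(t_n L) = \kappa s_n \sinh(t_n L)$, i.e. $\tanh(t_n L) = t_n/(\kappa s_n)$.

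Next I would exploit the sign: since $\kappa < 0$ and $t_n, s_n > 0$, the right-hand side $t_n/(\kappa s_n)$ is strictly negative, while $\tanh(t_n L) > 0$ for $t_n L > 0$. Hence the equation cannot hold for any $n$ with $\lambda_n > \max((k^\I)^2, (k^\e)^2)$. This is the crucial structural observation: unlike the unbounded case of Section~\ref{sec:The unbounded case}, here the Dirichlet condition at $x = L$ forces a $\cosh$/$\sinh$ combination whose natural sign precludes resonance in the evanescent regime, so no super-critical catastrophe occurs. It remains only to handle the finitely many $n$ for which $\lambda_n < \max((k^\I)^2, (k^\e)^2)$; for these finitely many indices $\mathcal{D}_n$ may or may not vanish, but in any case they contribute at most finitely many bad modes, which is exactly what the statement allows. (One can even note that these exceptional vanishings are non-generic, depending on the precise relation between $L$, $k^\e$, $k^\I$ and the finitely many small eigenvalues $\lambda_n$.)

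The main obstacle is essentially bookkeeping rather than depth: one must carefully translate the complex exponentials $e^{\pm i \beta_n^\I L}$ into hyperbolic functions with the correct branch of the square root (dictated by the sign conventions \eqref{eq:choixbeta}), and verify that the purely imaginary character of both $\beta_n^\I$ and $\beta_n^\e$ in the relevant regime makes the factor $i$ come out cleanly so that the sign argument applies to a genuinely real equation. A secondary point to be careful about is the intermediate regime $\min((k^\I)^2,(k^\e)^2) < \lambda_n < \max((k^\I)^2,(k^\e)^2)$, which can only occur when $k^\I \neq k^\e$ and only for finitely many $n$; there one of $\beta_n^\I, \beta_n^\e$ is real and the other imaginary, so $\mathcal{D}_n$ could conceivably vanish, but again this is absorbed into the ``finite number of $n$'' clause. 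Assembling these three cases — finitely many small $\lambda_n$, finitely many intermediate $\lambda_n$, and infinitely many large $\lambda_n$ where the sign argument rules out vanishing — yields the claim.
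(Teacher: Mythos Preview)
There is a genuine sign error that invalidates your ``crucial structural observation''. With $\beta_n^\I=-it_n$ and $\beta_n^\e=is_n$ one has $\sin(\beta_n^\I L)=\sin(-it_nL)=-i\sinh(t_nL)$, and hence
\[
2i\kappa\beta_n^\e\sin(\beta_n^\I L)=2i\kappa(is_n)(-i\sinh(t_nL))=+2i\kappa s_n\sinh(t_nL),
\]
so the correct expression is $\mathcal D_n=2i\bigl[t_n\cosh(t_nL)+\kappa s_n\sinh(t_nL)\bigr]$, not with a minus sign. The vanishing condition is therefore $\tanh(t_nL)=-t_n/(\kappa s_n)$, and since $\kappa<0$ the right-hand side is \emph{positive}, just like the left-hand side. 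Your sign argument thus collapses: the evanescent regime can, and sometimes does, produce zeros of $\mathcal D_n$. The paper in fact explicitly identifies these as surface plasmons (equation \eqref{eq:tmp4}) and stresses that they exist \emph{precisely because} $\kappa<0$.

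What actually shows that only finitely many $n$ are bad in the regime $\lambda_n>\max((k^\I)^2,(k^\e)^2)$ is a two-step argument: first, the asymptotics of Proposition~\ref{prop:asymptotiqueDn} give $\mathcal D_n\neq0$ for all sufficiently large $n$ in every case (including the super-critical one, where $\mathcal D_n\sim 2i\sqrt{\lambda_n}\,e^{-L\sqrt{\lambda_n}}$ is small but nonzero); second, on any bounded range of $\lambda$ the function $\lambda\mapsto\sqrt{\lambda-(k^\I)^2}\cosh\bigl(L\sqrt{\lambda-(k^\I)^2}\bigr)+\kappa\sqrt{\lambda-(k^\e)^2}\sinh\bigl(L\sqrt{\lambda-(k^\I)^2}\bigr)$ is a nonzero analytic function, hence has only isolated zeros. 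Your treatment of the finitely many small and intermediate $\lambda_n$ is fine, but the large-$\lambda_n$ part needs to be redone along these lines.
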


\begin{proof}
Since we excluded cut-off wave numbers, $\beta^\I_n \neq 0$ and $\beta^\e_n \neq 0$. Notice that $\cos( \beta^\I_n L)$ and $\sin( \beta^\I_n L)$ cannot vanish simultaneously. We distinguish 3 cases:
\begin{enumerate}
\item $\lambda_n < \min((k^\I)^2, (k^\e)^2)$. Both $\beta^\I_n$ and $\beta^\e_n $ are real according to \eqref{eq:choixbeta}. Thus $\mathcal D_n \neq 0$.
\item $\min((k^\I)^2, (k^\e)^2) < \lambda_n < \max((k^\I)^2, (k^\e)^2) (k^\I)^2$ (can only happens if $k^\I \neq k^\e$). There are two possibilities:
\begin{itemize}
\item $k^\e > k^\I$, so $(k^\I)^2 <  \lambda_n < (k^\e)^2$. According to \eqref{eq:choixbeta}, $\beta^\I_n$ is purely imaginary whereas $\beta_n^\e$ is real, so $ -2 \beta^\I_n \cos( \beta^\I_n L)$ is purely imaginary and $2i \kappa \beta^\e_n \sin( \beta^\I_n L)$ is real, thus $\mathcal D_n \neq 0$.
\item $k^\e < k^\I$, so $(k^\e)^2 <  \lambda_n < (k^\I)^2$. According to \eqref{eq:choixbeta} and \eqref{eq:defdeterminants}, the equation $\mathcal D_n = 0$ becomes
\begin{equation}\label{eq:tmp3}
\sqrt{(k^\I)^2-\lambda_n} \cos \left( \sqrt{(k^\I)^2-\lambda_n}  \right) + 2  \kappa \sqrt{\lambda_n-(k^\e)^2} \sin\left( \sqrt{(k^\I)^2-\lambda_n}  \right) = 0.
\end{equation}
Seen as an equation of unknown $\lambda_n$, \eqref{eq:tmp3} could only have a finite number of solutions in $((k^\e)^2 , (k^\I)^2)$ because its left hand-side defines a non-zero holomorphic function on the ball centred in $((k^\I)^2+(k^\e)^2)/2$ of radius $(k^\e)^2 - (k^\I)^2$. 
\end{itemize}
\item $\lambda_n > \max((k^\I)^2, (k^\e)^2)$. According to \eqref{eq:choixbeta} and \eqref{eq:defdeterminants}, the equation $\mathcal D_n = 0$ becomes
\begin{equation}\label{eq:tmp4}
\sqrt{\lambda_n-(k^\I)^2} \cosh \left( \sqrt{\lambda_n-(k^\I)^2}   \right) +  \kappa \sqrt{\lambda_n-(k^\e)^2} \sinh\left( \sqrt{\lambda_n-(k^\I)^2}   \right) = 0.
\end{equation}
Again, seen as an equation in $\lambda_n$, \eqref{eq:tmp3} could only have a finite number of solutions in $I=(\max((k^\I)^2, (k^\e)^2),+\infty)$. In each bounded subset of $I$, it could have only a finite number of zero (again because the left hand-side of \eqref{eq:tmp3} defines a non-zero holomorphic function on the half-space $\{z \in\C \ \colon \Im z > \max((k^\I)^2, (k^\e)^2 \}$) and for $\lambda_n$ large enough $\mathcal D_n$ does not vanish (see the asymptotics of Proposition \ref{prop:asymptotiqueDn}).
\end{enumerate}
This ends the proof.
\end{proof}

When $\mathcal D_n$ vanishes, the corresponding system \eqref{eq:systnd} has a non-empty kernel of dimension 1 spanned by $(2i \sin(\beta_n^\I L),-e^{i \beta_n^\I L},e^{i \beta_n^\I L} )^\mathrm{T}$. Consequently, the transmission problem \eqref{eq:pbguide:nd} has a non-empty kernel of finite dimension, spanned by
\begin{equation}\label{eq:fonctionsNoyau2}
G_n(x,\mf y) := \begin{cases} 2i \sin(\beta_n^\I L) \psi_n(\mf y) e^{-i \beta^\e_n \vert x \vert} & \text{for $x < 0$, $\mf y \in \Gamma$,} \\
 2 i \psi_n(\mf y) \sin( \beta_n^\I (L-x)) & \text{for $x > 0$, $\mf y \in \Gamma$.}
 \end{cases}
 \end{equation} 
When \eqref{eq:tmp3} holds, it means that $(k^\e)^2 <  \lambda_n < (k^\I)^2$ so $\beta_n^\e$ is real whereas $\beta_n^\I$ is purely imaginary. Consequently, the corresponding $G_n$ are evanescent in $\Omega^\e$. Thus these functions correspond to the so-called trapped modes (in the sense that $G_n$ is localised in the bounded domain $\Omega^\I$). Notice that they could exist without change of sign: \eqref{eq:tmp3} can hold even when $\kappa >0$ (see \cite{linton2007embedded} for more details about trapped modes). When \eqref{eq:tmp4} holds, since $\lambda_n > \max((k^\I)^2, (k^\e)^2)$, both $\beta_n^\I$ and $\beta_n^\e$ are purely imaginary, thus the corresponding $G_n$ is evanescent of each side of the interface (surface plasmons). Such solution cannot exist when $\kappa >0$, \textit{i.e.} without changes of sign.

\subsubsection{Asymptotic analysis}

Following the same steps as in the previous section, when $\mathcal D_n \neq 0$ we can first solve \eqref{eq:systnd}:
\begin{equation}\label{eq:solutionnd}
\begin{bmatrix}
u^\e_{n} \\ u^\I_{n,+} \\ u^\I_{n,-} 
\end{bmatrix}= (\mathcal A_n)^{-1} 
\begin{bmatrix}
f_n \\ g_n \\ 0
\end{bmatrix}, \quad \text{where} \quad 
 (\mathcal A_n)^{-1} 
 =  \frac{1}{\mathcal D_n}
 \begin{bmatrix}
 2 \beta^\I_n \cos(\beta_n^\I L) &  2 i  \cos(\beta_n^\I L) & - 2 \beta^\I_n \\
 - \kappa \beta^\e_n e^{-i \beta^\I_n L} & -e^{-i \beta^\I_n L} & \kappa \beta^\e_n- \beta^\I_n \\
 \kappa \beta^\e_n e^{i \beta^\I_n L} & e^{i \beta^\I_n L} & -\kappa \beta^\e_n- \beta^\I_n  
 \end{bmatrix}.
\end{equation}

We now compute the asymptotic of $\mathcal D_n$:

\begin{prop}\label{prop:asymptotiqueDn}
One has
\begin{equation}
\mathcal D_n \underset{n \to +\infty}{\sim} \left\{
\begin{aligned}
 i (1+\kappa) \sqrt{\lambda_n} e^{L \sqrt{\lambda_n}} & \qquad \text{if $\kappa  \neq -1$},\\
 i \frac{(k^\e)^2-(k^\I)^2}{2} \frac{1}{\lambda_n} e^{L \sqrt{\lambda_n}} & \qquad \text{if $\kappa  =-1$ and $k^\e \neq k^\I$}, \\
2 i \sqrt{\lambda_n} e^{- L \sqrt{\lambda_n}}  & \qquad \text{if $\kappa  =-1$ and $k^\e = k^\I$} .
\end{aligned} \right.
\end{equation}
\end{prop}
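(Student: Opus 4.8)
The plan is to start from the exact formula for $\mathcal D_n$ in \eqref{eq:defdeterminants},
\[
\mathcal D_n = -2\beta^\I_n\cos(\beta^\I_n L) + 2i\kappa\beta^\e_n\sin(\beta^\I_n L),
\]
and substitute the large-$n$ behaviour of $\beta^\I_n$ and $\beta^\e_n$. Since $\lambda_n \to +\infty$, for $n$ large we are in the regime $\lambda_n > \max((k^\I)^2,(k^\e)^2)$, so by \eqref{eq:choixbeta} both square roots are purely imaginary: $\beta^\I_n = -i\sqrt{\lambda_n-(k^\I)^2}$ and $\beta^\e_n = i\sqrt{\lambda_n-(k^\e)^2}$. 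I would write $\mu_n := \sqrt{\lambda_n-(k^\I)^2}$ and $\nu_n := \sqrt{\lambda_n-(k^\e)^2}$, so that $\cos(\beta^\I_n L) = \cosh(\mu_n L)$ and $\sin(\beta^\I_n L) = -i\sinh(\mu_n L)$, giving
\[
\mathcal D_n = 2i\mu_n\cosh(\mu_n L) + 2\kappa\nu_n\sinh(\mu_n L) = 2i\left[\mu_n\cosh(\mu_n L) - i\kappa\nu_n\sinh(\mu_n L)\right].
\]
Now I would factor out $\tfrac12 e^{\mu_n L}$ from both hyperbolic functions, using $\cosh(\mu_n L) \sim \tfrac12 e^{\mu_n L}$ and $\sinh(\mu_n L)\sim\tfrac12 e^{\mu_n L}$ with exponentially small corrections, to get $\mathcal D_n \sim i e^{\mu_n L}(\mu_n + i\kappa'\nu_n)$ — wait, more carefully, $-i\kappa\nu_n$ with $\kappa<0$ real, so $\mathcal D_n \sim i e^{\mu_n L}\,(\mu_n - i\kappa\nu_n)$, which is not quite real; I need to track this sign bookkeeping honestly, but the structure is clear.

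The key algebraic point is then the expansion of $\mu_n - i\kappa\nu_n$ (or whatever the correct combination turns out to be after the sign check) in powers of $1/\lambda_n$, exactly as in the proof of \eqref{eq:asympDet}: using $\mu_n = \sqrt{\lambda_n}\,(1 - \tfrac{(k^\I)^2}{2\lambda_n} + O(\lambda_n^{-2}))$ and similarly for $\nu_n$, one gets a leading term proportional to $(1+\kappa)\sqrt{\lambda_n}$ when $\kappa\neq -1$, and when $\kappa = -1$ the $\sqrt{\lambda_n}$ term cancels and the next term is proportional to $\tfrac{(k^\e)^2-(k^\I)^2}{\sqrt{\lambda_n}}$, and when additionally $k^\e = k^\I$ even that cancels. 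The subtlety in the third case ($\kappa = -1$, $k^\e = k^\I$): here $\mu_n = \nu_n$ exactly, so $\mathcal D_n = 2i\mu_n\cosh(\mu_n L) - 2i\mu_n\sinh(\mu_n L) = 2i\mu_n e^{-\mu_n L}$ \emph{exactly}, not just asymptotically — the two exponentially large pieces cancel perfectly and only the exponentially small piece survives. This gives $\mathcal D_n \sim 2i\sqrt{\lambda_n}\,e^{-L\sqrt{\lambda_n}}$. This is the case one must handle separately and it is the cleanest of the three.

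The main obstacle I anticipate is purely bookkeeping: keeping the factors of $i$, the sign of $\kappa$, and the $\tfrac12$'s straight so that the constants $i(1+\kappa)$, $i\tfrac{(k^\e)^2-(k^\I)^2}{2}$, and $2i$ come out exactly as stated, and in particular making sure that in the critical case the $e^{L\sqrt{\lambda_n}}$ factor (not $e^{-L\sqrt{\lambda_n}}$) is the right growth rate — this is where the first two cases differ structurally from the third. I would organize the computation by first establishing the exact identity $\mathcal D_n = 2i\left[\mu_n\cosh(\mu_n L) - i\kappa\nu_n\sinh(\mu_n L)\right]$ with $\mu_n,\nu_n$ as above, then treating the three cases: cases 1 and 2 by writing $\cosh(\mu_n L) = \tfrac12 e^{\mu_n L}(1+O(e^{-2\mu_n L}))$ and expanding $\mu_n \mp i\kappa\nu_n$ in $1/\lambda_n$; case 3 by the exact cancellation above. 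The exponential factors $e^{\pm L\sqrt{\lambda_n}}$ dominate everything, so the $O(\lambda_n^{-2})$ error terms in the polynomial parts are harmless.
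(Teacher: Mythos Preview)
Your approach is correct and matches the paper's, which is very terse: for the first two cases it simply refers back to the half-line computation (Proposition with asymptotic \eqref{eq:asympDet}), and for the super-critical case it notes the exact identity $\mathcal D_n = -2\beta^\I_n e^{-i\beta^\I_n L}$, which is precisely your exact cancellation $2i\mu_n e^{-\mu_n L}$.

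The sign bookkeeping you flag resolves cleanly: since $2i\kappa\beta^\e_n\sin(\beta^\I_n L)=2i\kappa(i\nu_n)(-i\sinh(\mu_n L))=2i\kappa\nu_n\sinh(\mu_n L)$, the bracket is the \emph{real} quantity $\mu_n\cosh(\mu_n L)+\kappa\nu_n\sinh(\mu_n L)$ (indeed this is what you implicitly used, correctly, in case~3), and the expansion $\mu_n+\kappa\nu_n\sim(1+\kappa)\sqrt{\lambda_n}$ or $\sim\tfrac{(k^\e)^2-(k^\I)^2}{2\sqrt{\lambda_n}}$ then yields the stated constants directly.
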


\begin{proof}
The first two cases are obtained exactly like the ones of Proposition \ref{eq:asympDet2d}. For $\kappa  =-1$ and $k^\e = k^\I$, notice that $\mathcal D_n =  2 \beta^\e_n e^{i \beta^\e_n L} = - 2 \beta^\I_n e^{-i \beta^\I_n L} $ (for $n$ large enough). The result in this case is thus straightforward.
\end{proof}

Finally, one gets the asymptotics of the modal coefficients:

\begin{prop}
One has
\begin{equation}\label{eq:tmptmptmp}
\begin{bmatrix}
u^\e_{n} \\ u^\I_{n,+} \\ u^\I_{n,-} 
\end{bmatrix} \underset{n \to +\infty}{\sim}
  \left\{
\begin{aligned}
 &  
\frac{1}{1+\kappa} \mathcal M_{n,\kappa,1} (0)  \begin{bmatrix}
f_n \\ g_n \\ 0
\end{bmatrix},
 & \qquad &\text{if $\kappa  \neq -1$,}\\
 &  
 \frac{2}{(k^\e)^2 - (k^\I)^2} \mathcal M_{n,-1,2}(2)  \begin{bmatrix}
f_n \\ g_n \\ 0
\end{bmatrix},
 & \qquad & \text{if $\kappa  =-1$ and $k^\e \neq k^\I$,} \\
& \frac{1}{2} e^{2 L \sqrt{\lambda_n}} \mathcal M_{n,-1,3} (0)  \begin{bmatrix}
f_n \\ g_n \\ 0
\end{bmatrix},
& \qquad & \text{if $\kappa  =-1$ and $k^\e = k^\I$,} 
\end{aligned} \right.
\end{equation}
where $\mathcal M_{n,\kappa,j}(p)$ is the matrix
\begin{equation}
\mathcal M_{n,\kappa,j}(p) := \begin{bmatrix}
-1 &   -i (\lambda_n)^{(p-1)/2} & 2 e^{-L \sqrt{\lambda_n}}\\ 
- \kappa (\lambda_n)^{p/2} e^{-2 L \sqrt{\lambda_n}} &   i (\lambda_n)^{(p-1)/2}  e^{-2 L \sqrt{\lambda_n}} & \delta_j e^{-2 L \sqrt{\lambda_n}} \\
\kappa (\lambda_n)^{p/2} & -i (\lambda_n)^{(p-1)/2}  & (1-\kappa) e^{-L \sqrt{\lambda_n}} 
\end{bmatrix},
\end{equation}
with
\begin{equation}
\delta_j = \begin{cases} (1+\kappa) & \text{if $j = 1$ ($\kappa \neq -1$),} \\
\frac{(k^\e)^2 - (k^\I)^2}{2}  & \text{if $j = 2$ ($\kappa = -1$ and $k^\e \neq k^\I$),} \\ 
0 & \text{if $j = 3$ ($\kappa = -1$ and $k^\e = k^\I$).}
\end{cases}
\end{equation}
\end{prop}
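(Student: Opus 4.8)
The plan is to follow, \emph{mutatis mutandis}, the scheme already used twice above (see the conclusions of Sections~\ref{sec:conclusion} and~\ref{sec:conclusion2}): one takes the explicit inverse \eqref{eq:solutionnd} of $\mathcal A_n$, applies it to the right-hand side of \eqref{eq:systnd}, and replaces every matrix entry by its leading behaviour as $n\to+\infty$. Since $\lambda_n\to+\infty$, for $n$ large one has $\lambda_n>\max((k^\e)^2,(k^\I)^2)$, so \eqref{eq:choixbeta} gives $\beta^\e_n=i\sqrt{\lambda_n-(k^\e)^2}$ and $\beta^\I_n=-i\sqrt{\lambda_n-(k^\I)^2}$; the trigonometric functions of $\beta^\I_n L$ appearing in \eqref{eq:defdeterminants}--\eqref{eq:solutionnd} then become hyperbolic ones, $\cos(\beta^\I_n L)=\cosh(L\sqrt{\lambda_n-(k^\I)^2})$ and $\sin(\beta^\I_n L)=-i\sinh(L\sqrt{\lambda_n-(k^\I)^2})$, while $e^{\pm i\beta^\I_n L}=e^{\pm L\sqrt{\lambda_n-(k^\I)^2}}$. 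The preliminary step is to collect the needed expansions: the two-term estimates $\beta^\e_n=i\sqrt{\lambda_n}\,(1-(k^\e)^2/(2\lambda_n)+\mathcal O(\lambda_n^{-2}))$ and $\beta^\I_n=-i\sqrt{\lambda_n}\,(1-(k^\I)^2/(2\lambda_n)+\mathcal O(\lambda_n^{-2}))$ already used in \eqref{eq:asympBeta}; the elementary $\cosh(Lt)\sim\sinh(Lt)\sim\tfrac12 e^{Lt}$ as $t\to+\infty$; and the fact that $e^{L\sqrt{\lambda_n-(k^\e)^2}}$, $e^{L\sqrt{\lambda_n-(k^\I)^2}}$ and $e^{L\sqrt{\lambda_n}}$ are all equivalent as $n\to+\infty$ (their pairwise quotients tend to $1$), which is why only $e^{L\sqrt{\lambda_n}}$ appears in the final formula.

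The core of the argument is then a two-scale bookkeeping. Each of the nine entries of $\mathcal D_n^{-1}$ times the adjugate matrix of \eqref{eq:solutionnd} carries a power of $\sqrt{\lambda_n}$ \emph{and} an exponential weight taken among $e^{0}$, $e^{-L\sqrt{\lambda_n}}$, $e^{-2L\sqrt{\lambda_n}}$; here the $e^{\mp L\sqrt{\lambda_n}}$ come precisely from the factors $e^{\mp i\beta^\I_n L}$ in the second and third rows of the adjugate, and a further $e^{-L\sqrt{\lambda_n}}$ from dividing by a determinant whose leading size, by Proposition~\ref{prop:asymptotiqueDn}, carries an $e^{L\sqrt{\lambda_n}}$. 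In the standard case $\kappa\neq-1$, Proposition~\ref{prop:asymptotiqueDn} gives the leading size of $\mathcal D_n$, and dividing the leading contribution of each row by it reproduces exactly $\tfrac1{1+\kappa}\mathcal M_{n,\kappa,1}(0)$ applied to $(f_n,g_n,0)^{\mathrm T}$ — the power $p=0$, the factor $1/(1+\kappa)$ and the various $e^{-L\sqrt{\lambda_n}}$, $e^{-2L\sqrt{\lambda_n}}$ weights fall out of the computation. (The last column of $\mathcal M_{n,\kappa,j}(p)$ is always multiplied by the homogeneous Dirichlet datum $0$, hence never intervenes; it is kept only so that the statement would remain meaningful for inhomogeneous data at $x=L$.) The critical case $\kappa=-1$, $k^\e\neq k^\I$ is handled the same way, with one extra order in the expansion: the leading term, proportional to $1+\kappa$, cancels in every entry, so one keeps the next order, in which $1+\kappa$ is replaced by $\sqrt{\lambda_n-(k^\I)^2}-\sqrt{\lambda_n-(k^\e)^2}\sim((k^\e)^2-(k^\I)^2)/(2\sqrt{\lambda_n})$; this simultaneously produces the prefactor $2/((k^\e)^2-(k^\I)^2)$ and shifts the exponent from $p=0$ to $p=2$, giving $\mathcal M_{n,-1,2}(2)$.

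The super-critical case $\kappa=-1$, $k^\e=k^\I$ is the one genuinely new ingredient and, as in the proof of Proposition~\ref{prop:asymptotiqueDn}, is best organised around the identity $\mathcal D_n=2\beta^\e_n e^{i\beta^\e_n L}=-2\beta^\I_n e^{-i\beta^\I_n L}$ (valid for $n$ large), so that here $\mathcal D_n\sim 2i\sqrt{\lambda_n}\,e^{-L\sqrt{\lambda_n}}$ is exponentially \emph{small}. Dividing the (otherwise unchanged) adjugate entries of \eqref{eq:solutionnd} by this small quantity makes the factor $e^{L\sqrt{\lambda_n}}/e^{-L\sqrt{\lambda_n}}=e^{2L\sqrt{\lambda_n}}$ appear explicitly, and one reads off $(u^\e_n,u^\I_{n,+},u^\I_{n,-})\sim\tfrac12 e^{2L\sqrt{\lambda_n}}\,\mathcal M_{n,-1,3}(0)(f_n,g_n,0)^{\mathrm T}$; the value $\delta_3=0$ reflects the exact cancellation $\kappa\beta^\e_n-\beta^\I_n=0$ when $\kappa=-1$ and $k^\e=k^\I$, which kills the corresponding adjugate entry.

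The only real difficulty is precisely this two-scale bookkeeping. Unlike the disk/ball case of Section~\ref{sec:diskball} and the half-line problem \eqref{eq:pbguide:nd2}, the bounded inclusion makes \eqref{eq:solutionnd} mix the growing exponential $e^{i\beta^\I_n L}$ with the decaying one $e^{-i\beta^\I_n L}$, so the nine entries live on genuinely different exponential scales; to see the cancellation in the critical case one must expand $\beta^\e_n$ and $\beta^\I_n$ to the same (second) order uniformly across all of them, and track carefully which entries come with $e^{-L\sqrt{\lambda_n}}$ versus $e^{-2L\sqrt{\lambda_n}}$. Once this is organised, the matrix $\mathcal M_{n,\kappa,j}(p)$ together with the constants $\delta_j$ is exactly the device that makes the three cases collapse into one formula, and the remaining verification is the usual ``tedious but straightforward'' computation.
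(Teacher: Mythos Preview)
Your proposal is correct and follows exactly the approach the paper intends: the paper itself gives no explicit proof of this proposition, treating it as a direct consequence of combining the explicit inverse \eqref{eq:solutionnd} with the asymptotics \eqref{eq:asympBeta} and Proposition~\ref{prop:asymptotiqueDn}. Your write-up simply makes this ``tedious but straightforward'' computation explicit, including the correct handling of the exponential scales and the cancellation mechanisms in the critical and super-critical cases.
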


\subsubsection{Conclusion}
\label{subsubsec:conclusion}

Notice that, in the super-critical case $\kappa = -1$ and $k^\e=k^\I$, one gets a factor $e^{2 L \sqrt{\lambda_n}}$ in front of $\mathcal M_{n,-1,3} (0)$. Thus we need to introduce the following weighted spaces analogous to \eqref{eq:charasobolev0} for $s>0$ and $L \geq 0$:
\begin{equation}\label{eq:charasobolevGL}
 \mathfrak G_L^s(\phi) := \left\{ u \in L^2(\phi)\colon  \sum_{n \in \N} e^{2L \sqrt{\lambda_n}} (1+\lambda_n)^s \vert u_n \vert^2 <  +\infty\right \}.
\end{equation}
Notice that we have the following inclusions for $s^\prime \geq s > 0$ and $L^\prime \leq L \leq 0$:
\begin{equation}\label{eq:inclusions}
\mathfrak G_L^{s^\prime}(\Gamma) \subset \mathfrak G_L^{s}(\Gamma) \qquad \text{and} \qquad 
\mathfrak G_{L^\prime}^{s}(\Gamma) \subset \mathfrak G_L^{s}(\Gamma) .
\end{equation}
The condition $\sum_{n \in \N} e^{2L \sqrt{\lambda_n}} (1+\lambda^s_n) \vert u_n \vert^2 <  +\infty$ is restrictive because it imposes an exponential decay of the modal coefficients of the functions belonging to $\mathfrak G_L^s(\Gamma)$.
We can extend the definition of $ \mathfrak G_L^s(\Gamma) $ by duality to negative exponents:
\begin{equation}\label{eq:charasobolevGL2}
 \mathfrak G^{-s}(\Gamma) := \left\{ \phi \in \mathcal C^\infty(\Gamma)^* \colon  \sum_{n \in \N} e^{2L \sqrt{\lambda_n}} (1+\lambda_n)^{-s} \vert \phi_n \vert^2 <  +\infty\right \}.
\end{equation}

It is now possible to conclude:
\begin{thm}\label{thm:guide}
Let $s> 0$ and consider the transmission problem \eqref{eq:pbguide:nd}: 
\begin{itemize}
\item if $\kappa  \neq -1$ (standard case), for $(f,g) \in \mathfrak H^s(\Gamma) \times \mathfrak H^{s-1}(\Gamma)$, \eqref{eq:pbguide:nd} admits a unique solution $(u^\I\vert_\Gamma,u^\e\vert_\Gamma) \in \mathfrak H^s(\Gamma)^2$ (no order of regularity lost);
\item if $\kappa  = -1$ and and $k^\e \neq k^\I$ (critical case), for $(f,g) \in \mathfrak H^{s+2}(\Gamma) \times \mathfrak H^{s+1}(\Gamma)$, \eqref{eq:pbguide:nd} has a unique solution $(u^\I\vert_\Gamma,u^\e\vert_\Gamma) \in \mathfrak H^s(\Gamma)^2$ (2 orders of regularity lost);
\item if $\kappa  = -1$ and $k^\e = k^\I$ (super-critical case), for $(f,g) \in \mathfrak G_L^s(\Gamma) \times \mathfrak G_L^{s-1}(\Gamma)$,  \eqref{eq:pbguide:nd} has a unique solution $(u^\I\vert_\Gamma,u^\e\vert_\Gamma) \in \mathfrak H^s(\Gamma)^2$ (``infinite" order of regularity lost);
\end{itemize}
except in the exceptional situations when \eqref{eq:tmp3} or \eqref{eq:tmp4} holds. In this case, it has a kernel of finite dimension spanned by the evanescent functions \eqref{eq:fonctionsNoyau2} (trapped modes or evanescent modes).
\end{thm}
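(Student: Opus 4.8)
The plan is to combine the spectral characterisation of the solution spaces with the three asymptotic regimes for $(u^\e_n,u^\I_{n,\pm})$ established in the previous proposition, exactly as was done for Theorem~\ref{thm:ball}. First I would dispose of the exceptional cases: when \eqref{eq:tmp3} or \eqref{eq:tmp4} holds for some $n$, the matrix $\mathcal A_n$ of \eqref{eq:systnd} is singular with one-dimensional kernel, and since these equations have only finitely many solutions $\lambda_n$ (each of finite multiplicity), the resulting kernel of \eqref{eq:pbguide:nd} is finite-dimensional and spanned by the functions \eqref{eq:fonctionsNoyau2}; the nature of these functions (trapped modes versus surface plasmons) follows from the sign analysis already carried out. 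From now on I assume we are outside these exceptional situations, so $\mathcal D_n\neq 0$ for all $n$ and \eqref{eq:solutionnd} gives $(u^\e_n,u^\I_{n,\pm})$ explicitly for every $n$.

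Next I would treat each of the three cases in turn. In every case, the content of the previous proposition is that
$$\begin{bmatrix} u^\e_n \\ u^\I_{n,+} \\ u^\I_{n,-} \end{bmatrix} \sim C\, \mathcal M_{n,\kappa,j}(p) \begin{bmatrix} f_n \\ g_n \\ 0 \end{bmatrix}$$
with the relevant $j$ and $p$; reading off the rows of $\mathcal M_{n,\kappa,j}(p)$, one sees that each modal coefficient is asymptotically a fixed nonzero linear combination of $(\lambda_n)^{p/2} f_n$ and $(\lambda_n)^{(p-1)/2} g_n$, up to the harmless exponential weights $e^{\pm L\sqrt{\lambda_n}}$ attached to the various entries. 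For the standard case $p=0$: $u^\e_n$, $u^\I_{n,+}$ and $u^\I_{n,-}$ are each $\sim$ a nonzero combination of $f_n$ and $(\lambda_n)^{-1/2} g_n$, so $\sum_n (1+\lambda_n)^s(|u^\e_n|^2+|u^\I_{n,\pm}|^2)<\infty$ is equivalent to $f\in\mathfrak H^s(\Gamma)$ and $g\in\mathfrak H^{s-1}(\Gamma)$; then forming $u^\I\vert_\Gamma = \sum_n(u^\I_{n,+}+u^\I_{n,-})\psi_n$ and $u^\e\vert_\Gamma=\sum_n u^\e_n\psi_n$ and using \eqref{eq:charasobolev0}--\eqref{eq:charasobolev00} gives $(u^\I\vert_\Gamma,u^\e\vert_\Gamma)\in\mathfrak H^s(\Gamma)^2$, with optimality following because the asymptotic constants are nonzero (no cancellation can improve the exponent). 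The critical case is identical with $p=2$, shifting the required regularity of $(f,g)$ by two orders. For the super-critical case, the extra factor $e^{2L\sqrt{\lambda_n}}$ in front of $\mathcal M_{n,-1,3}(0)$ forces the data to lie in the exponentially weighted spaces $\mathfrak G_L^s(\Gamma)\times\mathfrak G_L^{s-1}(\Gamma)$ of \eqref{eq:charasobolevGL}--\eqref{eq:charasobolevGL2}; conversely that weighted summability of $(f_n,g_n)$ is exactly what is needed for $\sum_n(1+\lambda_n)^s|u_n|^2<\infty$, giving $(u^\I\vert_\Gamma,u^\e\vert_\Gamma)\in\mathfrak H^s(\Gamma)^2$.

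The one subtlety deserving care, and the point I expect to be the main obstacle, is the bookkeeping of the exponential weights $e^{\pm L\sqrt{\lambda_n}}$ inside $\mathcal M_{n,\kappa,j}(p)$. In the standard and critical cases the entries of $\mathcal M$ that multiply $f_n$ or $g_n$ in the rows producing $u^\e_n$ and $u^\I_{n,+}+u^\I_{n,-}$ must be checked to carry \emph{no} growing exponential: one should verify that the $e^{-2L\sqrt{\lambda_n}}$ and $e^{-L\sqrt{\lambda_n}}$ factors either decay or cancel (e.g. the middle column has a third entry $0$ in the super-critical case precisely so that no $e^{+2L\sqrt{\lambda_n}}$ appears), so that the trace coefficients really are $\sim$ combinations of $f_n$ and $g_n$ without hidden weights. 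Once this is confirmed, the argument is a verbatim transcription of the Section~\ref{sec:conclusion} reasoning, and the statement follows. Finally I would remark, as promised before the theorem, that a reinterpretation in terms of a volume source $F$ is immediate by the same standard-regularity argument used at the end of Section~\ref{sec:conclusion}.
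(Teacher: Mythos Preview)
Your proposal is correct and follows essentially the same route as the paper: the paper simply states that, having introduced the weighted spaces $\mathfrak G_L^s(\Gamma)$ to absorb the factor $e^{2L\sqrt{\lambda_n}}$ in the super-critical case, one concludes by combining the asymptotics of the preceding proposition with the characterisations \eqref{eq:charasobolev0}, \eqref{eq:charasobolev00}, \eqref{eq:charasobolevGL}, \eqref{eq:charasobolevGL2}, exactly as in Section~\ref{sec:conclusion}. Your extra care about the exponential bookkeeping is well placed (the paper glosses over it), though note that the third column of $\mathcal M_{n,\kappa,j}(p)$ is irrelevant since it is multiplied by $0$, and not every entry of the first two columns carries the full power $(\lambda_n)^{p/2}$ (e.g.\ the $(1,1)$ entry is $-1$); the sharp regularity requirement on $f$ is dictated by the $u^\I_{n,-}$ row, which is what yields the stated loss.
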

Notice that Remark \ref{rmk:cutoff} still holds in this situation. These results are summarised in Table \ref{table:nd}. We can also reinterpret the results in term of volume source  as we done at the end of Sections \ref{sec:conclusion} and \ref{sec:conclusion2} for the standard and the critical cases. 

\begin{table} 
\centering
\begin{tabular}{|c|c|c|}
\hline
 & $\kappa \neq -1$ & $ \kappa = -1$ \\
 \hline
 $k^\e \neq k^\I$ & 0 & 2\\
 \hline
 $k^\e = k^\I$ & 0 & $\infty$ \\
 \hline
\end{tabular}
\
\caption{\label{table:nd} orders of regularity lost solving \eqref{eq:pbguide:nd}.}
\end{table}

For the super-critical case, the concluding observation of Remark \ref{rmk:source} when the source $F$ is compactly supported in $\Omega^\e$ does not hold any more. Indeed, one can have $f \in \mathcal C^\infty(\Gamma)$ without having $f \in \mathfrak G_L^{s}(\Gamma)$. Denote by $d(S,\Gamma)$ the Hausdorff distance between the support $S$ of $F$ and the interface $\Gamma$, and denote by $h$ the trace of $u^\mathrm{inc}$ on $\Gamma_F := \{ -d(S,\Gamma) \} \times \Gamma$ located at $x = -d(S,\Gamma)$. Then $u^\mathrm{inc}$ is the outgoing solution of the problem $ \Delta u^\mathrm{inc} + (k^\e)^2 u^\mathrm{inc}= 0$ on $(-d(S,\Gamma),+\infty) \times \Gamma$ with the condition $u^\mathrm{inc} = h$ on $\Gamma_F$. It can be given explicitly:
\begin{equation}
u^\mathrm{inc}(x,\mf y) = \sum_{n \in \N}  h_n  e^{i \beta^\e_n (x+d(S,\Gamma))} \psi_n(\mf y).
\end{equation}
where $h_n = \langle h,\psi_n \rangle_{L^2(\Gamma)}$. It means that the modal coefficients $f_n$ of $f = u^\mathrm{inc} \vert_\Gamma$ satisfy $f_n = h_n e^{i \beta^\e_n d(S,\Gamma)}$ so using \eqref{eq:choixbeta} one gets
\begin{equation}\label{eq:tmpFinal}
e^{2L \sqrt{\lambda_n}} f_n  =   e^{2L \sqrt{\lambda_n}} e^{i \beta^\e_n d(S,\Gamma)} h_n \underset{n \to +\infty}{\sim}  e^{(2L-d(S,\Gamma))\sqrt{\lambda_n}}h_n.
\end{equation}
Suppose now that $h \in \mathfrak H^{s}(\Gamma)$, $s>0$. If $d(S,\Gamma) \geq 2L$, \eqref{eq:tmpFinal} combined with \eqref{eq:charasobolev0} and  \eqref{eq:charasobolevGL} gives $f_n \in \mathfrak G_L^{s}(\Gamma)$. In a similar way, one has also $g \in \mathfrak G_L^{s-1}(\Gamma)$). Thus, using Theorem \ref{thm:guide}, \eqref{eq:pbguide:nd} is well-posed and we get $(u^\I\vert_\Gamma,u^\e\vert_\Gamma) \in \mathfrak H^{s}(\Gamma)^2$. Now if $d(S,\Gamma) < 2L$, coming back to \eqref{eq:tmptmptmp} and using \eqref{eq:tmpFinal}, one can see that the modal coefficients $u^\e_{n}$, $u^\I_{n,+}$ and $u^\I_{n,-} $ are growing exponentially. This means that the corresponding $u^\e\vert_\Gamma$ and $u^\I\vert_\Gamma$ are not even distributions on $\Gamma$ of finite order. In other words, the condition $(f,g) \in \mathfrak G_L^s(\Gamma) \times \mathfrak G_L^{s-1}(\Gamma)$ in Theorem \ref{thm:guide} is truly restrictive since it imposes that the source $F$ must be supported far away from the interface $\Gamma$, at a distance at least $2L$.

\section{Discussion and prospects}
\label{ref:comments}

Even if our analysis was able to finely characterise the loses of regularity of the considered problems, it is inevitably limited to particular geometries for which separation of variables is possible. For more general domains, when $\Omega^\I$ is bounded, only partial results have been proved, for $d\geq 3$ and when $\Omega^\I$ is strictly convex in \cite{ola1995remarks} and \cite{nguyen2016limiting}. This approach can also handle the case $d=2 $ with $k^\e \neq k^\I$ (critical case) but seems to fail irremediably when $\Omega^\I$ is not strictly convex for $d\geq 3$ and when $k^\e = k^\I$ (super-critical case) for $d=2$. It appears that we need some new idea to tackle these two cases. 

Another interesting problem is to deal with the full Maxwell equations (for $d=3$) instead of the Helmholtz equation. When $\kappa = -1$, very few has been done for these equations when involving sign-changing coefficients, even for smooth interfaces or simple geometries. Let us mention the paper \cite{dhia2014t} where the authors use results on scalar problems with sign-changing coefficients to deduce results on the full Maxwell equations. This approach could be certainly used in other situations.  

To conclude, let us mention that tremendous difficulties appear when the interface is not smooth any more (when it has corners for instance). In this case, in order to have well-posedness in $H^1$, the contrasts must lie outside an interval called the critical interval that contains $\{-1\}$. If they do not (but are different of $-1$), solutions exhibit strongly oscillating behaviour near the corners (\cite{dhia2012t,bonnet2013radiation}). One has to add some radiation conditions at the corners and to change the functional framework to recover well-posedness (as we did in this paper for the critical and super-critical cases). It is now well understood for $d=2$ but, as mentioned before, for $d=3$ (Maxwell equations) there is a lot to investigate, due to the fact that the geometries in 3d can much more complex than in 2d (it can have corners, edges, conical points, etc.). Finally, to our knowledge, the case where the contrasts are equal to $-1$ when the interface is not smooth has never been investigated.

\appendix

\section{Appendix: justification of the radiation conditions for negative materials}
\label{sec:radiationcondition}

In this Section, we justify that the ``correct'' (\textit{i.e.} physically relevant) radiation condition in media for which the coefficients are negative is \eqref{eq:sommerfeldInverse} instead of \eqref{eq:sommerfeld}.

For simplicity, we restrict ourselves to the dimension $d=1$, but one can proceed similary for higher dimensions. The method consists in using the limiting absorption principle (\cite{eidus1963principle}). It characterised the ``correct'' solution as the limit, when the dissipation tends to 0, of the unique solution of the same problem when the medium is absorbing, \textit{i.e.} the coefficients have a non-zero imaginary part. 

More precisely, consider the Helmholtz equation $u^{\prime \prime} + k^2 u =0$ where $k := \omega  \sqrt{\eps \mu}$ is a fixed wave number with $(\eps,\mu) := (\eps^\e,\mu^\e)$ or $(\eps,\mu) := (\eps^\I,\mu^\I)$. We want to determine what is the radiation condition to impose when $x$ tends to $+\infty$ (the case $-\infty$ is analogous). Suppose that the background medium is slightly absorbing, so that one has a permittivity $\eps_\eta$ and a permeability $\mu_\gamma$ which are now complex numbers:
\begin{equation}
\eps_\eta:= \eps + i \eta\qquad \text{and} \qquad \mu_\gamma := \mu + i \gamma,
\end{equation}
where $\eta>0$ and $\gamma > 0$ represent the absorption terms (see Remark \ref{rmk:absorptionSign}). We now define the corresponding wave number $k_{\eta,\gamma}$ such that $k_{\eta,\gamma} = (\omega^2  \eps_\eta \mu_\gamma)^{1/2}$, where we choose for the square root the ones which has $\R_+$ for the branch cut (this choice is arbitrary, another choice would lead to the same results):
\begin{equation}\label{defi:sqrt} 
z^{1/2} := \sqrt{\vert z \vert }  e^{i \arg z  /2},\qquad z \in \C\setminus \R^*_+,\ \arg z \in (0,2\pi).
\end{equation}
The solutions $u_{\eta,\gamma}$ of the Helmholtz equation $u_{\eta,\gamma}^{\prime \prime} + (k_{\eta,\gamma} )^2 u_{\eta,\gamma} =0$ are given by
\begin{equation}
u_{\eta,\gamma}(x) = A e^{i k_{\eta,\gamma}  x} + B e^{-i k_{\eta,\gamma}  x},
\end{equation}
for some constants $A$ and $B$. Since the imaginary part of $k_{\eta,\gamma}$ is always positive (see \eqref{defi:sqrt}), $e^{i k_{\eta,\gamma} x} $ is bounded when $x$ tend to $+\infty$ but $e^{-i k_{\eta,\gamma} x}$ is not. So one must impose $B=0$, and doing so one gets $u_{\eta,\gamma}(x) = A e^{i k_{\eta,\gamma}  x}$.  Moreover, using $\eps_\eta \mu_\gamma = (\eps \mu - \eta \gamma) + i(\eps \gamma + \mu \eta)$, the imaginary part of $\eps_\eta \mu_\gamma$ is positive when $(\eps,\mu) = (\eps^\e,\mu^\e)$ and negative when $(\eps,\mu) = (\eps^\I,\mu^\I)$. We obtain, according to \eqref{defi:sqrt}, that
\begin{equation}
\myRe k_{\eta,\gamma} > 0 \quad  \text{if $(\eps,\mu) = (\eps^\e,\mu^\e)$ } \qquad \text{and} \qquad  \myRe k_{\eta,\gamma} < 0 \quad\text{if $(\eps,\mu) = (\eps^\I,\mu^\I)$. } 
\end{equation}
Thus, we get
\begin{equation}\label{eq:choixk} 
\lim_{ \eta ,\gamma \to 0} k_{\eta,\gamma} =\begin{cases}  k^\e &  \text{if $(\eps,\mu) = (\eps^\e,\mu^\e)$, }  \\ -k^\I \ & \text{if $(\eps,\mu) = (\eps^\I,\mu^\I)$, }  \end{cases}
\end{equation}
and this implies
\begin{equation}
\lim_{ \eta ,\gamma \to 0} e^{-i k_{\eta,\gamma} x}  = \begin{cases}  e^{-i k^\e x} &  \text{if $(\eps,\mu) = (\eps^\e,\mu^\e)$, }  \\ e^{i k^\I x} \ & \text{if $(\eps,\mu) = (\eps^\I,\mu^\I)$. }  \end{cases}
\end{equation}
Classically, $e^{-i k^\e x}$ verifies the Sommerfeld radiation condition \eqref{eq:sommerfeld} but $ e^{i k^\I x}$ does not. Nevertheless this last quantity satisfies the ``reversed'' condition \eqref{eq:sommerfeldInverse}. This justifies the radiation conditions used in \eqref{eq:pbguide:nd2}.

\begin{rmk}\label{rmk:absorptionSign}
The choice of the sign for the imaginary part of $\eps_\eta$ and $\mu_\gamma$ is linked to the time convention $e^{-i \omega t}$. Indeed, under reasonable physical assumptions (passivity and causality) and with this convention, it is possible to show that $\eps_\eta$ and $\mu_\gamma$ (as function of $\omega$) are necessarily Herglotz functions, \textit{i.e.} analytical functions of the upper half-plane with positive imaginary parts (see for instance \cite{nussenzveig1972causality,vinoles2016problemes}).
\end{rmk}

\section{Appendix: Bessel and Hankel functions}
\label{sec:bessel}

Recall (see \textit{e.g.} \cite{watson1995treatise,olver2010nist}) that the Bessel functions are defined as the solutions of the ODE
\begin{equation}\label{eq:eqBessel}
r^2 \frac{ \mathrm d^2 y}{\mathrm d r^2} + r \frac{ \mathrm d y}{\mathrm d  r} +(r^2-\nu^2) y = 0,
\end{equation}
where $\nu\in \C$ is a parameter (in our case an integer or half an integer). Equation \eqref{eq:eqBessel} admits two linearly independent solutions $J_\nu$ (Bessel function of the first kind) and $Y_\nu$ (Bessel function of the second kind) defined by
\begin{equation}\label{eq:formuleJnu}
J_\nu(r) := \sum_{k = 0}^{+\infty} \frac{(-1)^k}{k! \,\Gamma(k+\nu+1)} \left( \frac r2\right)^{2k+\nu},\qquad r \geq 0
\end{equation}
where $\Gamma$ is the Gamma function and by
\begin{equation}
Y_\nu(r) :=\frac{J_\nu(r) \cos(\nu \pi) - J_{-\nu}(r)}{\sin(\nu \pi)},\qquad r>0.
\end{equation}
This last expression has to be understood as the limit value when $\nu = n \in \Z$: $Y_n = \lim_{\nu \to n} Y_\nu$. The spherical Bessel functions $j_\nu$ and $y_\nu$ are defined using the Bessel functions:
\begin{equation}
j_\nu(r) = \sqrt{\frac{\pi}{2r}} J_{\nu+1/2}(r) \qquad \text{and} \qquad y_\nu(r) = \sqrt{\frac{\pi}{2r}} Y_{\nu+1/2}(r).
\end{equation}
We also define the Hankel (reps.spherical Hanekl) function of the first kind $H_n := J_n + i Y_n$ (resp. $h_\ell = j_\ell + i y_\ell$). 

The linear independence of $J_\nu$ and $Y_\nu$ can be specified through the Wronskian formula: for all $\nu \in \C$ et $r>0$, one has (the derivatives are w.r.t. $r$)
\begin{equation} \label{eq:wronskien}
J_\nu(r)Y_\nu^\prime(r)-J^\prime_\nu(r)Y_\nu(r) = \frac{2}{\pi r}.
\end{equation}

Recall that we first want to prove Lemma \ref{lemme:determinant}. Actually we can prove the more general result: 
\begin{lemme}
Let $\alpha,\beta>0$ and $\lambda \in \R^*$. For any $\nu>0$ such that $\alpha$ is not a zero of $J_\nu$ and $\beta$ is not a zero of $H_\nu$, one has 
\begin{equation}\label{eq:besselTMP}
\frac{  J^\prime_\nu ( \alpha )}{J_\nu ( \alpha)}  + \lambda    \frac{  H^\prime_\nu( \beta )}{ H_\nu  (\beta ) }  \neq 0.
\end{equation}
\end{lemme}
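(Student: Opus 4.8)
The plan is to separate the real and imaginary parts of the quantity in \eqref{eq:besselTMP}. The key observation is that $J_\nu$ is real-valued on $(0,\infty)$ for $\nu > 0$, so $J_\nu'(\alpha)/J_\nu(\alpha) \in \R$, whereas $H_\nu = J_\nu + i Y_\nu$ is genuinely complex. Hence $\lambda H_\nu'(\beta)/H_\nu(\beta)$ must have nonzero imaginary part for the sum to possibly vanish, and we will show its imaginary part is \emph{never} zero (for $\beta$ not a zero of $H_\nu$, which is automatic anyway since $H_\nu$ has no positive real zeros). Concretely, write
\begin{equation}
\myIm \left( \frac{H_\nu'(\beta)}{H_\nu(\beta)} \right) = \frac{\myIm\left( H_\nu'(\beta) \overline{H_\nu(\beta)} \right)}{\vert H_\nu(\beta)\vert^2} = \frac{J_\nu'(\beta) Y_\nu(\beta) - J_\nu(\beta) Y_\nu'(\beta)}{\vert H_\nu(\beta)\vert^2},
\end{equation}
using $\overline{H_\nu(\beta)} = J_\nu(\beta) - i Y_\nu(\beta)$ and expanding $H_\nu'\overline{H_\nu} = (J_\nu' + iY_\nu')(J_\nu - iY_\nu)$.

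The numerator is exactly the negative of the left-hand side of the Wronskian formula \eqref{eq:wronskien}, so
\begin{equation}
\myIm \left( \frac{H_\nu'(\beta)}{H_\nu(\beta)} \right) = \frac{-\,2/(\pi\beta)}{\vert H_\nu(\beta)\vert^2} = \frac{-2}{\pi \beta \, \vert H_\nu(\beta)\vert^2} \neq 0 ,
\end{equation}
which is strictly negative for all $\beta > 0$. Therefore, since $\lambda \in \R^*$,
\begin{equation}
\myIm\left( \frac{J_\nu'(\alpha)}{J_\nu(\alpha)} + \lambda \frac{H_\nu'(\beta)}{H_\nu(\beta)} \right) = \lambda \cdot \frac{-2}{\pi\beta\,\vert H_\nu(\beta)\vert^2} \neq 0,
\end{equation}
and in particular the sum cannot be zero. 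This proves \eqref{eq:besselTMP}. To deduce Lemma \ref{lemme:determinant}, one applies this with $\nu = n$ (resp. $\nu = \ell + 1/2$ after reducing the spherical Bessel/Hankel functions to ordinary ones via $j_\ell(r) = \sqrt{\pi/(2r)}\,J_{\ell+1/2}(r)$ and $h_\ell(r) = \sqrt{\pi/(2r)}\,H_{\ell+1/2}(r)$, noting the common prefactor cancels in the logarithmic-derivative ratios), $\alpha = k^\I R$, $\beta = k^\e R$, $\lambda = -\kappa > 0$, after factoring $k^\I$ out of the first term and $k^\e$ out of the second in the definitions \eqref{eq:determinants}; the case $n < 0$ follows from the parity relations $J_{-n} = (-1)^n J_n$, $H_{-n} = (-1)^n H_n$.

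I do not expect a serious obstacle here: the argument is essentially a one-line consequence of the Wronskian identity \eqref{eq:wronskien} together with the reality of $J_\nu$ on the positive axis. The only points requiring a modicum of care are (i) checking that $H_\nu(\beta) \neq 0$ for $\beta > 0$ so that the logarithmic derivative is well-defined — this is classical, $J_\nu$ and $Y_\nu$ having no common zeros precisely by \eqref{eq:wronskien} — and (ii) handling the half-integer order and the normalizing prefactors in the $d=3$ case, which is routine bookkeeping. If anything is delicate it is making sure the statement as phrased (with the hypothesis "$\beta$ is not a zero of $H_\nu$") is consistent; since $H_\nu$ in fact has no positive zeros the hypothesis is vacuous, but it does no harm to keep it.
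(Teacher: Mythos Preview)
Your proof is correct and follows essentially the same route as the paper: both arguments take the imaginary part of \eqref{eq:besselTMP}, use that $J_\nu'(\alpha)/J_\nu(\alpha)$ is real, and invoke the Wronskian identity \eqref{eq:wronskien} to conclude that $\myIm\big(H_\nu'(\beta)/H_\nu(\beta)\big)$ is nonzero. One harmless slip: expanding $(J_\nu'+iY_\nu')(J_\nu-iY_\nu)$ gives $\myIm(H_\nu'\overline{H_\nu}) = J_\nu Y_\nu' - J_\nu' Y_\nu = +2/(\pi\beta)$, not the sign you wrote, so the imaginary part is strictly \emph{positive} rather than negative --- but this does not affect the conclusion.
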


\begin{proof}
Let $\nu > 0$ fixed. By contradiction, suppose that there exist $\alpha,\beta>0$ and $\lambda \in \R^*$ such that the left hand-side of \eqref{eq:besselTMP} is zero.
Taking its imaginary part and using the fact that $H_\nu = J_\nu + i Y_\nu$ one gets
\begin{equation}
\frac{J_\nu(\beta)Y_\nu^\prime(\beta)-J^\prime_\nu(\beta)Y_\nu(\beta) }{J_\nu(\beta)^2+Y_\nu(\beta)^2} = 0.
\end{equation}
This contradicts the Wronskian formula \eqref{eq:wronskien}.
\end{proof}

Now we want to prove the asymptotics \eqref{eq:asymptobessel} and \eqref{eq:asymptobessel3d}. We start with a lemma concerning $J_\nu$.
\begin{lemme}
Assume $\nu \in \R^*$ and $r>0$. Then 
\begin{equation}\label{eq:asymptoticBesselNu}
J_\nu(r) = \frac{r^\nu}{2^\nu \Gamma(\nu+1) } \left[ \sum_{k=0}^N  \frac{ (-1)^k \Gamma(\nu+1)}{k! \,  \Gamma(k+\nu+1)} \left( \frac{r}{2} \right)^{2k} + \mathcal O\left( \frac{1}{(\nu+1)^{N+1}} \right)  \right].
\end{equation}
\end{lemme}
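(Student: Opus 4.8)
The plan is to work directly from the series definition \eqref{eq:formuleJnu} of $J_\nu$ and to split it into the announced polynomial part and a remainder. Factoring out the prefactor $r^\nu/(2^\nu\,\Gamma(\nu+1))$ gives
\[
J_\nu(r) = \frac{r^\nu}{2^\nu\,\Gamma(\nu+1)} \sum_{k=0}^{+\infty} \frac{(-1)^k\,\Gamma(\nu+1)}{k!\,\Gamma(k+\nu+1)}\left(\frac r2\right)^{2k} .
\]
The terms $k=0,\dots,N$ here are exactly those appearing inside the bracket of \eqref{eq:asymptoticBesselNu}, so the statement reduces to showing that the tail $\sum_{k\geq N+1}$ of this series is $\mathcal O\bigl((\nu+1)^{-(N+1)}\bigr)$ as $\nu\to+\infty$, with $r$ fixed.

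The elementary identity that drives everything is
\[
\Gamma(k+\nu+1) = (\nu+1)(\nu+2)\cdots(\nu+k)\,\Gamma(\nu+1),\qquad k\in\N,
\]
valid for every real $\nu>-1$ by iterating $\Gamma(z+1)=z\Gamma(z)$; hence $\Gamma(\nu+1)/\Gamma(k+\nu+1)=\prod_{j=1}^k(\nu+j)^{-1}\leq(\nu+1)^{-k}$ for $\nu\geq 0$. Using this bound, together with $(\nu+1)^{-k}\leq(\nu+1)^{-(N+1)}$ whenever $k\geq N+1$ and $\nu\geq 0$, I would estimate
\[
\left|\sum_{k=N+1}^{+\infty}\frac{(-1)^k\,\Gamma(\nu+1)}{k!\,\Gamma(k+\nu+1)}\left(\frac r2\right)^{2k}\right| \leq \frac{1}{(\nu+1)^{N+1}}\sum_{k=N+1}^{+\infty}\frac{(r^2/4)^k}{k!} \leq \frac{e^{r^2/4}}{(\nu+1)^{N+1}},
\]
which is the claimed $\mathcal O\bigl((\nu+1)^{-(N+1)}\bigr)$ since $r$ is fixed. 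Plugging this back into the factored series yields \eqref{eq:asymptoticBesselNu}.

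There is no genuine obstacle here; the only things to handle with a little care are that the factored series converges for every $r$ (it is dominated term by term by the exponential series, so splitting off finitely many terms is legitimate and the remainder estimate above is meaningful), and that one should record that the bound is uniform in the regime that matters, namely $\nu\to+\infty$ for a fixed argument. That is precisely what is needed to deduce the asymptotics of $J_n$ in \eqref{eq:asymptobessel} (take $\nu=n$, so $\Gamma(\nu+1)=n!$ and $\Gamma(k+\nu+1)=(n+k)!$, and $(n+1)^{-(N+1)}=\mathcal O(n^{-(N+1)})$) and, via $j_\ell(r)=\sqrt{\pi/(2r)}\,J_{\ell+1/2}(r)$, of the spherical Bessel functions in \eqref{eq:asymptobessel3d}; the companion expansions for $J_\nu'$, $H_\nu$ and $h_\ell$ then follow by term-by-term differentiation and by writing $H_\nu=J_\nu+iY_\nu$ with $Y_\nu$ expressed through $J_{-\nu}$ and expanded in the same manner.
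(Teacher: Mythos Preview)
Your proof is correct and follows essentially the same route as the paper: factor out $r^\nu/(2^\nu\Gamma(\nu+1))$ from the defining series \eqref{eq:formuleJnu}, keep the first $N+1$ terms, and bound the tail via $\Gamma(k+\nu+1)/\Gamma(\nu+1)\geq(\nu+1)^{N+1}$ for $k\geq N+1$. You even supply the product-formula justification for that Gamma inequality and an explicit constant $e^{r^2/4}$, which the paper leaves implicit; the only minor caveat is your closing remark that $Y_\nu$ can be handled ``through $J_{-\nu}$ in the same manner'' --- this works for half-integers (the spherical case) but for integer order one needs the limiting expression \eqref{eq:eqbessel1}, as the paper does.
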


\begin{proof}
From \eqref{eq:formuleJnu}, one gets
\begin{equation}\label{eq:formuleJnu2}
J_\nu(r) = \frac{r^\nu}{2^\nu \Gamma(\nu+1) } \left[ \sum_{k=0}^N  \frac{ (-1)^k \Gamma(\nu+1)}{k!\,  \Gamma(k+\nu+1)} \left( \frac{r}{2} \right)^{2k} + \sum_{k \geq N+1} \frac{ (-1)^k \Gamma(\nu)}{k!\,  \Gamma(k+\nu+1)} \left( \frac{r}{2} \right)^{2k}  \right].
\end{equation}
Since $\Gamma(k+\nu+1)/\Gamma(\nu+1)\geq (\nu+1)^{N+1}$ for $k \geq N+1$, by denoting $R_N$ the second sum of \eqref{eq:formuleJnu2}, one has
\begin{equation}
\vert R_N \vert \leq \frac{1}{(\nu+1)^{N+1}}  \sum_{k \geq N+1} \frac{1}{k!}\left( \frac{r}{2} \right)^{2k}. 
\end{equation}
Observing that this series is convergent, one gets \eqref{eq:asymptoticBesselNu}.
\end{proof}

We can already deduce some results from this lemma. For the asymptotic of $J_n(r)$ in \eqref{eq:asymptobessel}, one just need to take $\nu = n \in \N^*$ in \eqref{eq:asymptoticBesselNu} (since $\Gamma(n+1) = n!$). For $j_\ell(r)$, recall that $j_\ell(r) = \sqrt{\pi/(2r)}  J_{\ell+1/2}(r)$, so taking $\nu =  \ell + 1/2$, $\ell \in \N$ in \eqref{eq:asymptoticBesselNu} and
using 
\begin{equation}
\Gamma\left(\ell + \frac{1}{2}\right) = \sqrt{\pi} \frac{(2\ell-1)!!}{2^\ell},\qquad \ell \in \N,
\end{equation}
gives the asymptotic of $j_\ell(r)$ in \eqref{eq:asymptobessel3d}. The asymptotics for $J_n^\prime(r)$ and $j_\ell^\prime(r)$ are deduced easily from the ones of $J_n(r)$ and $j_\ell(r)$. Concerning the Hankel functions $H_n$ and $h_\ell$, we first need the ones for $Y_n$ and $y_\ell$. For the last, it is straightforward: using $y_\ell(r) = (-1)^{\ell+1} \sqrt{\pi/(2 r)} J_{-(\ell+1/2)}(r)$ and taking $\nu =  -(\ell + 1/2)$ in \eqref{eq:asymptoticBesselNu} lead to 
\begin{equation}\label{eq:tmpBessel}
y_\ell (r)  = -\frac{(2\ell-1)!!}{r^{\ell+1}} \left[ \sum_{k=0}^N \frac{ (2\ell-2k-1)!!}{k! } \frac{1}{(2\ell-1)!!}  \left( \frac{r^2}{2} \right)^k+ \mathcal O\left(\frac{1}{\ell^{N+1}} \right)\right]. 
\end{equation}
To deduce the result for $h_\ell$ in \eqref{eq:asymptobessel3d}, it suffices to notice that $j_\ell$ is negligible compared to $y_\ell$, so $h_\ell(r) \sim iy_\ell(r)$ when $\ell$ tends to $+\infty$ so the asymptotic of $h_\ell(r)$ in \eqref{eq:asymptobessel3d} is directly given by \eqref{eq:tmpBessel} and the ones for $h_\ell^\prime(r)$ are deduced easily from them. 

For the asymptotic of $Y_n$, we cannot do it directly. We have to use that
\begin{equation}\label{eq:eqbessel1}
Y_n(r)  = \frac{2}{\pi}\left[ \log\left(\frac{r}{2}\right) + \gamma \right] - \frac{1}{\pi} \sum_{k=0}^{n-1} \frac{(n-1-k)!}{k!} \left( \frac{2}{r}\right)^{n-2k} 
-\frac{1}{\pi} \sum_{k=0}^{+\infty} \frac{(-1)^k}{k!(n+k)!} \left( \frac{r}{2}\right)^{2k+n} \left[ \psi(k+n)+\psi(k) \right],
\end{equation}
where
\begin{equation}
\psi(k) := \sum_{m=1}^k \frac{1}{m} \qquad \text{and} \qquad \gamma:=\lim_{k \to +\infty} \left( \psi(k) - \log(k) \right)\approx 0,5772\cdots,
\end{equation}
are respectively the partial sums of the harmonic series and the Euler-Mascheroni constant. First notice that the first term of \eqref{eq:eqbessel1} does not depend on $n$, so $2[\log(r/2)-\gamma]/\pi = \mathcal O(1)$. The third term is bounded with respect to $n$ too, because $\psi(n+k) = \log(n+k) + \gamma + \mathcal O(1/n)$ (see for instance \cite{conway2012book}) and $r^n \log(k+n) / (n+k)!$ is bounded. Thus we get from \eqref{eq:eqbessel1}
\begin{equation}
Y_n(r) = \frac{1}{ \pi} \frac{2^n (n-1)!}{r^n} \left[\sum_{k = 0}^N \frac{ (n-k-1)!}{k!(n-1)!}  \left( \frac{r}{2} \right)^{2k}+\mathcal O\left(\frac{1}{n^{N+1}} \right)\right] .
\end{equation}
To deduce the result for $H_n$ in \eqref{eq:asymptobessel}, it suffices to notice that $J_n$ is negligible compared to $Y_n$, so the asymptotic of  $H_n$ in \eqref{eq:asymptobessel3d} is directly given by \eqref{eq:tmpBessel}. The ones for $H_n^\prime(r)$ are then deduced easily.

\section*{Acknowledgment}

The author deeply thanks Camille Carvalho and Lucas Chesnel for their useful comments and suggestions on this work.

\bibliographystyle{imamat}
\bibliography{biblio}

\end{document}